\documentclass[11pt,a4paper]{article}
\usepackage{amsfonts,amscd,amsthm,amsgen,amsmath,amssymb}
\usepackage[all]{xy}
\usepackage[vcentermath]{youngtab}
\newtheorem{prop}{Proposition}[section]

\newtheorem{lemp}{Lemma}[section]

\theoremstyle{remark}
\newtheorem{rem}{Remark}[section]
\newtheorem{ex}{Example}[section]
\theoremstyle{definition}
\newtheorem{defn}{Definition}[section]

\title{Topological T-duality for general circle bundles}
\author{David Baraglia\footnote{Email: david.baraglia@anu.edu.au}
\footnote{This work is supported by the Australian Research Council Discovery Project DP110103745.} \\
Mathematical sciences institute \\ 
The Australian National University \\
Canberra ACT 0200, Australia}
\date{\today}
\begin{document}
\maketitle
\begin{abstract}
We extend topological T-duality to the case of general circle bundles. In this setting we prove existence and uniqueness of T-duals. We then show that T-dual spaces have isomorphic twisted cohomology, twisted $K$-theory and Courant algebroids. A novel feature is that we must consider two kinds of twists in de Rham cohomology and $K$-theory, namely by degree $3$ integral classes and a less familiar kind of twist using real line bundles. We give some examples of T-dual non-oriented circle bundles and calculate their twisted $K$-theory.
\end{abstract}
\newpage
\tableofcontents
\newpage


\section{Introduction}

T-duality is a duality between spaces equipped with a degree $3$ integral cohomology class which arose from string theory (see for example \cite{gpr} and references therein). In this setting the spaces are equipped with metrics or other geometric structures and the T-duality is a duality between the two geometries. 

Topological T-duality, as formulated in \cite{bem} is the result of disregarding the more geometric aspects of T-dualty resulting in a purely topological notion. Here one considers pairs $(E,h)$ where $E$ is a principal torus bundle and $h \in H^3(E,\mathbb{Z})$ is a degree $3$-cohomology class, the {\em $H$-flux} in the physical interpretation. In this paper we extend topological T-duality from principal circle bundles to general circles bundles which are not necessarily orientable. The idea of developing such an extension actually arose from considering Courant algebroids and their relation to T-duality. Many of the results will extend to higher rank affine torus bundles but we have chosen to focus on the circle bundle case since already this demonstrates most of the relevant modifications one needs to make while avoiding the complications that occur for higher rank T-duality \cite{brs}, \cite{bhm1} which have been explained in terms of non-commutative geometry \cite{mr1}, \cite{mr2} and even non-associative geometry \cite{bhm2}.\\

To explain our formulation of T-duality let us first recall the definition of topological T-duality in \cite{bem} for principal ${\rm U}(1)$-bundles. Let $(E,h)$, $(\hat{E},\hat{h})$ be pairs consisting of principal ${\rm U}(1)$-bundles over a space $M$ and $h \in H^3(E,\mathbb{Z})$, $\hat{h} \in H^3(\hat{E},\mathbb{Z})$. We then have a commutative diagram
\begin{equation*}\xymatrix{
& F \ar[dl]_p \ar[dr]^{\hat{p}} \ar[dd]^q & \\
E \ar[dr]_\pi & & \hat{E} \ar[dl]^{\hat{\pi}} \\
& M &
}
\end{equation*}
where $F$ is the fibre product $F = E \times_M \hat{E}$. The pairs $(E,h)$, $(\hat{E},\hat{h})$ are said to be {\em T-dual} if the following conditions hold:
\begin{itemize}
\item{the first Chern class $F = c_1(E)$ of $E$ equals $\hat{\pi}_* \hat{h}$}
\item{the first Chern class $\hat{F} = c_1(\hat{E})$ of $\hat{E}$ equals $\pi_* h$}
\item{$p^* h = \hat{p}^* \hat{h}$}
\end{itemize}
where $\pi_*$ is the push-forward or Gysin homomorphism in cohomology $\pi_* : H^*(E,\mathbb{Z}) \to H^{*-1}(M,\mathbb{Z})$ and similarly for $\hat{\pi}_*$. Roughly speaking T-duality can be understood as an interchange between the first Chern class and part of the $H$-flux. For every pair $(E,h)$ there exists a T-dual pair $(\hat{E},\hat{h})$ which is unique up to bundle isomorphism.\\

Recall that for each class $h \in H^3(E,\mathbb{Z})$ one may define a twisted $K$-theory group $K^*(X,h)$. A key property of T-duality is that the twisted $K$-theories are isomorphic up to a shift in degree:
\begin{equation*}
K^*(E,h) \simeq K^{*-1}(E,\hat{h}).
\end{equation*}
If the spaces are smooth manifolds and $H,\hat{H}$ are closed forms representing $h,\hat{h}$ in real cohomology then we can also define twisted cohomology and there is an isomorphism
\begin{equation*}
H^*(E,H) \simeq H^{*-1}(\hat{E},\hat{H})
\end{equation*}
where the twisted cohomology $H^*(E,H)$ is the cohomology of the space $\Omega^*(E)$ of differential forms on $E$ thought of as a $\mathbb{Z}_2$-graded complex with differential $d_H = d + H$.\\

We can formulate a similar notion of T-duality for general circle bundles. We prove existence and uniqueness of T-duals in this more general setting. We also prove an isomorphism between the twisted $K$-theories and twisted cohomologies of T-dual spaces but for this result to hold we need to consider an additional kind of twist in K-theory and cohomology which accounts for non-orientability of the circle bundles.\\

For T-duality we need to recall the classification of circle bundles and this is done in Section \ref{affcl}. Actually we classify affine torus bundles, those torus bundles which have a reduction of structure to the group ${\rm Aff}(T^n) = {\rm GL}(n,\mathbb{Z}) \ltimes T^n$ of affine transformations of $T^n$. Even if we are just interested in the case of circle bundles this more general classification is useful because the correspondence space $F = E \times_M \hat{E}$ is a higher rank torus bundle and our proof of existence of T-duals makes careful use of the Leray-Serre spectral sequence for $F \to M$. 

In summary affine torus bundles over $M$ are classified by a pair $(\rho,c)$ where $\rho$ is a homomorphism $\rho : \pi_1(M) \to {\rm GL}(n,\mathbb{Z})$ called the {\em monodromy} and $c \in H^2(M,\Lambda_\rho)$ is a degree $2$ cohomology class with coefficients in a local system $\Lambda_\rho$ determined by $\rho$, called the {\em twisted first Chern class}. Two pairs $(\rho,c)$, $(\rho',c')$ give the same $T^n$-bundle if and only if they are related by the natural action of ${\rm GL}(n,\mathbb{Z})$. 

For circle bundles the monodromy is just an element $\xi \in H^1(M,\mathbb{Z}_2)$ which we also call the {\em first Stiefel-Whitney class of $E$} and the twisted first Chern class an element $c \in H^2(M,\mathbb{Z}_\xi)$ where $\mathbb{Z}_\xi$ is the system of local coefficients determined by $\xi$.\\

Now we can now give our definition of T-duality for circle bundles. Let $E \to M$ be a circle bundle over a space $M$ and $h \in H^3(E,\mathbb{Z})$. There is a push-forward operation $\pi_* : H^*(E,\mathbb{Z}) \to H^{*-1}(M,\mathbb{Z}_\xi)$ where $\mathbb{Z}_\xi$ is the local system determined by the class $\xi = w_1(E)$. We say a pair $(\hat{E},\hat{h})$ consisting of a circle bundle $\hat{E}\to M$ and flux $\hat{h} \in H^3(\hat{E},\mathbb{Z})$ is T-dual to $(E,h)$ if the following conditions are satisfied:
\begin{itemize}
\item{the Stiefel-Whitney classes agree: $\xi = w_1(E) = w_1(\hat{E})$}
\item{$F = c_1(E) = \hat{\pi}_* \hat{h}$ and $\hat{F} = c_1(\hat{E}) = \pi_* h$}
\item{$p^*h = \hat{p}^* \hat{h}$.}
\end{itemize}

In Proposition \ref{tdualexist} we prove that for any pair $(E,h)$ there exists a $T$-dual pair $(\hat{E},\hat{h})$ having the desired properties and that $(\hat{E},\hat{h})$ is unique up to bundle isomorphism.\\

For smooth manifolds we can give a description of $T$-duality using differential forms. If $(E,h)$ and $(\hat{E},\hat{h})$ are $T$-dual pairs over $M$ and we represent $h$ in real cohomology by a closed $3$-form $H \in \Omega^3(E)$ then we can give an explicit construction for the dual $3$-form $\hat{H}$ representing $\hat{h}$. For principal circle bundles the usual construction is done by choosing connections on $E,\hat{E}$. We extend this construction to the more general case using a notion of {\em twisted connection} introduced in Section \ref{twcon}. Alternatively the fibre orientation class $\xi \in H^1(M,\mathbb{Z}_2)$ determines a double cover $m : M_2 \to M$ such that $m^*(E) \to M$ admits the structure of a principal ${\rm O}(2)$-bundle and a twisted connection amounts to a connection on $m^*(E)$. From this point of view T-duality of circle bundles can be interpreted as a T-duality of principal ${\rm O}(2)$-bundles.\\

Next we compare the twisted cohomology of the pairs $(E,H)$, $(\hat{E},\hat{H})$. We would like to define a map between twisted cohomologies of $(E,H)$ and $(\hat{E},\hat{H})$ by means of a kind of Fourier-Mukai transform. Thus we first perform a pull-back $p^* : H^*(E,H) \to H^*(F,p^*H)$, then since $p^* H = \hat{p}^* \hat{H}$ there is a $2$-form $\mathcal{B} \in \Omega^2(F)$ such that 
\begin{equation*}
p^*H - \hat{p}^* \hat{H} = d \mathcal{B}
\end{equation*}
inducing an isomorphism $e^{\mathcal{B}} : H^*(F , p^*H) \to H^*(F,\hat{p}^* \hat{H})$ given by multiplication by $e^{\mathcal{B}}$. The final step is a push-forward or integration over the fibres $\hat{p}_* : H^*(F,\hat{p}^* \hat{H}) \to H^{*-1}(\hat{E}, (\xi , \hat{H}))$ where we have introduced a second kind of twisted cohomology $H^{*}(\hat{E}, (\xi , \hat{H}))$. The lack of consistent fibre orientation means that fibre integration picks up this additional twist. Corresponding to the class $\xi \in H^1(M,\mathbb{Z}_2)$ is a flat line bundle $(\mathbb{R}_\xi , \nabla)$. We then define $H^*(\hat{E},(\xi , \hat{H}))$ to be the cohomology of the complex $\Omega^*(\hat{E},\mathbb{R}_\xi)$ with differential $d_{\nabla,\hat{H}} = d_{\nabla} + \hat{H}$. Putting it all together we have defined a T-duality map
\begin{equation*}
T = \hat{p}_* \circ e^{\mathcal{B}} \circ p^* : H^*(E,H) \to H^{*-1}(\hat{E},(\xi,\hat{H}) ).
\end{equation*}
In a similar manner there is also T-duality map
\begin{equation*}
T_\xi : H^*(E,(\xi,H)) \to H^{*-1}(\hat{E},\hat{H}).
\end{equation*} 
We prove in Proposition \ref{tci} that the maps $T,T_\xi$ are isomorphisms.\\

Turning now to twisted $K$-theory in addition to twists by classes in $H^3(X,\mathbb{Z})$ we would also like to be able to twist by classes in $H^1(X,\mathbb{Z}_2)$ in a way that accounts for fibre orientation. In fact such twists of $K$-theory have been around since the work of Donovan and Karoubi \cite{dk}, though only for torsion classes. More recently Freed, Hopkins and Teleman \cite{fht} consider precisely the type of twisted $K$-theory we require. For any pair $(\xi , h) \in H^1(X,\mathbb{Z}_2) \times H^3(X,\mathbb{Z})$ there is a twisted $K$-theory group $K^*(E,(\xi,h))$. More correctly twists of $K$-theory can be defined using a notion of {\em graded bundle gerbe} which is a slight extension of the more familiar ungraded bundle gerbes. We review this in Section \ref{grbg}.\\

For a T-dual pair $(E,h)$, $(\hat{E},\hat{h})$ we construct maps 
\begin{eqnarray*}
&T& : K^*(E,h) \to K^{*-1}(\hat{E}, (\xi, \hat{h} )) \\
&T_\xi & : K^*(E,(\xi,h)) \to K^{*-1}(\hat{E},\hat{h})
\end{eqnarray*}
which are similarly defined as a kind of Fourier-Mukai transform. In Proposition \ref{tki} we show that $T,T_\xi$ are isomorphisms, at least if the base $M$ admits a finite good cover.\\

There is a twisted Chern character in twisted $K$-theory. If $\mathcal{G}$ is a graded gerbe with connection and curving with curvature $H$ then the twisted Chern character has the form
\begin{equation*}
Ch_\mathcal{G} : K^*(X,\alpha) \to H^*(X,(\xi,H))
\end{equation*}
where $\xi$ is the $H^1(X,\mathbb{Z}_2)$-component of the class of $\mathcal{G}$ in $H^1(X,\mathbb{Z}_2) \times H^3(X,\mathbb{Z})$. The twisted Chern chacter is often described only for ungraded bundle gerbes however the extension to graded gerbes is quite trivial and we explain this in Section \ref{tcc}. With suitable conventions in place the twisted Chern character is preserved under $T$-duality in the sense that we can construct a commutative diagram
\begin{equation*}\xymatrix{
K^*(E,\mathcal{G}) \ar[r]^-{T} \ar[d]^{Ch} & K^{*-1}(\hat{E}, \hat{\pi}^*\gamma \otimes \hat{\mathcal{G}}) \ar[d]^{Ch} \\
H^*(E,H) \ar[r]^-{T} & H^{*-1}(E,(\xi,\hat{H}))
}
\end{equation*}
here $\gamma$ is a graded gerbe that can be associated to the class $\xi$. To show this we extend the Riemann-Roch formula in twisted $K$-theory from the case of twists classified by $H^3(X,\mathbb{Z})$ to twists classified by $H^1(X,\mathbb{Z}_2) \times H^3(X,\mathbb{Z})$.\\

In Section \ref{catd} we explain the connection between T-dual circle bundles and Courant algebroids. Let $\pi : E \to M$ be a circle bundle over $M$ and $h \in H^3(E,\mathbb{Z})$ a degree $3$ cohomology class. Exact Courant algebroids on $E$ are classified by $H^3(E,\mathbb{R})$ so there is a unique exact Courant algebroid on $E$ classified by $h$. If $E$ is given an affine structure then one can take invariant sections which determines a corresponding Courant algebroid $\mathcal{E}(E,h)$ over the base $M$. One can do the same thing for the T-dual pair $(\hat{E},\hat{h})$ to get another Courant algebroid $\mathcal{E}(\hat{E},\hat{h})$ on $M$. We show in Proposition \ref{cai} that the Courant algebroids $\mathcal{E}(E,h),\mathcal{E}(\hat{E},\hat{h})$ are isomorphic. The existence of such an isomorphism is a weaker statement than T-duality between $(E,h)$ and $(\hat{E},\hat{h})$ due to a loss of information about torsion classes in cohomology. Nevertheless it is an important part of the T-duality story since geometric structures on $\mathcal{E}(E,h)$ can be transported to corresponding geometric structures on $\mathcal{E}(\hat{E},\hat{h})$. This ties in with the more geometric aspects of T-duality.\\

In section \ref{examps} we study some low dimensional examples of T-duality of non-orientable circle bundles. In some cases we are able to calculate all the relevant twisted $K$-theory groups and the results are consistent with T-duality. In other case we are only able to calculate the twisted K-theory group modulo an extension problem. In this case we can actually use T-duality to determine the group by calculating the corresponding twisted K-theory group of the T-dual.


\section{Local coefficients}

We provide a review of cohomology with local coefficients for the benefit of the reader and to establish notation. References for local coefficients are \cite{hat1},\cite{spa}.\\

Let $X$ be a path-connected topological space admitting a universal cover $\tilde{X}$. Let $A$ be an abelian group and ${\rm Aut}(A)$ the group automorphisms of $A$. Consider the following structures:

\begin{itemize}
\item{Representations $\rho : \pi_1(X) \to {\rm Aut}(A)$.}
\item{Covering spaces $Y \to X$ with fibres isomorphic to $A$ and a fibrewise addition $Y \times_X Y \to Y$ which restricted to the fibres corresponds to addition in $A$.}
\item{Sheaves of groups $\Gamma$ on $X$ such that each point $x \in X$ has a neighborhood $U$ such that $\Gamma$ restricted to $U$ is the constant sheaf $A$.}
\end{itemize}

Up to isomorphism all three of these structures are in bijection. In the case of representations $\pi_1(X) \to {\rm Aut}(A)$ two representations are isomorphic if they are conjugate under ${\rm Aut}(A)$. From a representation $\rho$ we form a covering space $A_\rho = \tilde{X} \times_\rho A$ and a sheaf $\Gamma(A_\rho)$, the sheaf of sections of $A_\rho$.

Given this correspondence we will refer to any one these three structures as a {\em system of local coefficients} and call the group $A$ the {\em coefficient group} of the local system. Sometimes we will refer to the representation $\rho : \pi_1(X) \to {\rm Aut}(A)$ as the {\em monodromy} of the local system.\\

We can consider homology and cohomology with local coefficients in a number of ways. First we look at singular homology and cohomology. Starting from the representation $\rho$ we may form the chain complex with coefficients in $A_\rho$, $C_n(X,A_\rho) = C_n(\tilde{X} , \mathbb{Z}) \otimes_{\rho} A$. Here the tensor product $\otimes_{\rho}$ means a tensor product of right and left $\mathbb{Z}[\pi_1(X)]$-modules. The boundary operator $\partial$ for $C_*(\tilde{X},A)$ clearly descends to a boundary operator on $C_*(X,A_\rho)$ and the homology $H_*(X,A_\rho)$ of this complex is by definition the homology of $X$ with local coefficients in $X_A$. Similarly we can form the cochain complex $C^n(X,A_\rho) = {\rm Hom}_{\mathbb{Z}[\pi_1(X)]}( C_n(\tilde{X},\mathbb{Z}) , A)$ and the coboundary operator $\delta$ on $C^*(\tilde{X},A)$ descends to a coboundary operator on $C^*(X,A_\rho)$. We let $H^*(X,A_\rho)$ denote the corresponding cohomology.\\

If we consider a system of local coefficients as a sheaf $\Gamma(A_\rho)$ then we can take \v{C}ech cohomology groups $\breve{H}^*(X,\Gamma(A_\rho))$. We have:
\begin{prop}\label{singcech}
If $X$ is triangulable then there is a canonical isomorphism $\breve{H}^*(X,\Gamma(A_\rho)) \simeq H^*(X,A_\rho)$.
\end{prop}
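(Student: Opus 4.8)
The plan is to use the triangulation to produce a good open cover, identify the resulting \v{C}ech complex with a simplicial cochain complex with local coefficients, and then invoke the standard comparison between simplicial and singular cohomology.

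First I would fix a triangulation of $X$ and take $\mathcal{U} = \{U_v\}$ to be the cover by open stars of the vertices. A nonempty intersection $U_{v_0} \cap \cdots \cap U_{v_p}$ is the open star of the simplex $\langle v_0, \dots, v_p \rangle$, hence contractible; in particular it is connected and simply connected, so $\Gamma(A_\rho)$ restricts there to the constant sheaf $A$. The constant sheaf on a contractible space has vanishing higher sheaf cohomology, so $\mathcal{U}$ is an acyclic (Leray) cover for $\Gamma(A_\rho)$. By Leray's theorem the \v{C}ech cohomology of this cover computes the sheaf cohomology of $\Gamma(A_\rho)$, which on the paracompact space $X$ coincides with $\breve{H}^*(X, \Gamma(A_\rho))$. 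Thus it suffices to compute everything from the single cover $\mathcal{U}$.

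Next I would identify the \v{C}ech complex of $\mathcal{U}$ with the simplicial cochain complex of $X$ with local coefficients in $A_\rho$. Since the nerve of $\mathcal{U}$ is the triangulation, a \v{C}ech $p$-cochain is exactly an assignment, to each ordered $p$-simplex, of a section of $\Gamma(A_\rho)$ over the corresponding star, which after a choice of trivialization is an element of $A$. The delicate point is to choose these trivializations coherently --- tracking base points and parallel transport in the covering space $A_\rho$ --- so that the identifications along faces reproduce precisely the monodromy $\rho$ and the \v{C}ech coboundary goes over to the $\rho$-twisted simplicial coboundary. This bookkeeping, ensuring that the local trivializations glue back to the given local system rather than to a noncanonically isomorphic one, is the main obstacle, although it is entirely a matter of matching conventions.

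Finally I would appeal to the fact that simplicial and singular cohomology with the same local coefficient system $A_\rho$ agree: the standard subdivision and acyclic-model arguments comparing simplicial and singular chains carry over verbatim once the coefficient module is transported along the parallel transport of $A_\rho$, exactly as in the untwisted case treated in \cite{hat1}, \cite{spa}. Composing the isomorphisms from the three steps yields the asserted canonical isomorphism. One could also shortcut the argument by quoting directly the comparison between sheaf cohomology with locally constant coefficients and singular cohomology with local coefficients from \cite{spa}, the good cover of the first step then only being needed to see that \v{C}ech cohomology computes sheaf cohomology.
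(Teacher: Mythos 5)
There is nothing in the paper to compare your argument against: this proposition is stated without proof, as a standard comparison theorem covered by the references (Hatcher, Spanier) cited at the start of the section on local coefficients, and it is then used only as a license to write $H^*(X,A_\rho)$ indifferently for \v{C}ech or singular cohomology on triangulable spaces. Your argument is a correct, self-contained proof of this standard fact, and it is the natural one: the open-star cover of a triangulation is a good cover whose nerve is the complex itself; Leray's theorem identifies the \v{C}ech cohomology of this single cover with the \v{C}ech (equivalently, sheaf) cohomology of the paracompact space $X$; and the resulting cochain complex is the $\rho$-twisted simplicial cochain complex, which agrees with singular cohomology with coefficients in $A_\rho$. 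Two points merit slight care but are not gaps. First, the acyclicity of the constant sheaf on an open star uses that stars are paracompact and locally contractible (being open subsets of a CW complex), not contractibility alone, which for arbitrary spaces would not suffice. Second, the coherent choice of trivializations you flag, via base points in each star and parallel transport in $A_\rho$, is exactly where the asserted canonicity of the isomorphism resides; it is routine but is the substantive content of the identification. Relative to the paper's bare citation, your proof has the merit of exhibiting precisely where triangulability, paracompactness and local contractibility enter, and it is in the same spirit as the acyclic-resolution \v{C}ech--de Rham argument that the paper does write out for the de Rham analogue in the proposition immediately following this one.
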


In particular this applies to all smooth manifolds. Henceforth whenever we consider cohomology with local coefficients of a space $X$ we assume $X$ is triangulable and use $H^*(X,A_\rho)$ to denote either singular or \v{C}ech cohomology.\\

Next we will consider de Rham cohomology twisted by local coefficients. Let $X$ be a smooth manifold and $A_\rho \to X$ a local system with coefficients a finitely generated abelian group $A$. Then $A_\mathbb{R} = A \otimes_{\mathbb{Z}} \mathbb{R}$ is a finite dimensional vector space and $(A_\rho)_\mathbb{R} = \tilde{X} \times_\rho A_\mathbb{R}$ is a vector bundle with flat connection $\nabla$. The monodromy of $\nabla$ is the image of $\rho$ under ${\rm Aut}(A) \to {\rm GL}(A_\mathbb{R})$. We can form the de Rham complex twisted by $\nabla$ which is the space $\Omega^*(X,(A_\rho)_\mathbb{R}) = \mathcal{C}^\infty( \wedge^* T^*X \otimes (A_\rho)_\mathbb{R})$. There is a twisted exterior derivative $d_\nabla$ characterized by the property
\begin{equation*}
d_\nabla ( \omega \otimes a) = d\omega \otimes a + (-1)^p \omega \wedge \nabla a
\end{equation*}
where $\omega$ is a $p$-form and $a$ is a section of $(A_\rho)_\mathbb{R}$. We let $H^*_\nabla(X,(A_\rho)_\mathbb{R} )$ denote the cohomology of this complex.

\begin{prop}
Let $X$ be a smooth manifold and $A_\rho$ a system of local coefficients with monodromy $\rho$ for a finitely generated abelian group $A$. There is a canonical isomorphism
\begin{equation*}
H^*_\nabla(X,(A_\rho)_\mathbb{R}) \simeq H^*(X,(A_\rho)_\mathbb{R})
\end{equation*}
where $(A_\rho)_\mathbb{R}$ is the local system $(A_\rho)_\mathbb{R} = \tilde{X} \times_\rho A_\mathbb{R}$.
\begin{rem}
Note that the set $(A_\rho)_\mathbb{R}$ can be viewed as either a vector bundle or a covering space. The topologies are different.
\end{rem}
\begin{proof}
Let $\Gamma((A_\rho)_\mathbb{R})$ be the sheaf of sections of $(A_\rho)_\mathbb{R}$ with the covering space topology. Then clearly $\Gamma((A_\rho)_\mathbb{R})$ is also the sheaf of covariantly constant sections of $(A_\rho)_\mathbb{R}$ viewed as a flat vector bundle. From here the proof follows the usual \v{C}ech-de Rham isomorphism. We have an acyclic resolution
\begin{equation*}\xymatrix{
0 \ar[r] & \Gamma((A_\rho)_\mathbb{R}) \ar[r]^-i & \Omega^0(X,(A_\rho)_\mathbb{R}) \ar[r]^-{d_\nabla} & \Omega^1(X,(A_\rho)_\mathbb{R}) \ar[r]^-{d_\nabla} & \dots
}
\end{equation*}
Break the sequence up into short exact sequences, take the associated long exact sequences and use the fact that each $\Omega^k(X,(A_\rho)_\mathbb{R})$ is acyclic. We thus obtain canonical isomorphisms 
$H^*_\nabla(X,(A_\rho)_\mathbb{R}) \simeq \breve{H}^*(X,\Gamma((A_\rho)_\mathbb{R}))$.
\end{proof}
\end{prop}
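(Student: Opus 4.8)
The plan is to realize this as an instance of the abstract de Rham theorem applied to a flat vector bundle, with the untwisted Čech--de Rham argument serving as the template. The central object is the locally constant sheaf $\Gamma((A_\rho)_\mathbb{R})$ of sections of $(A_\rho)_\mathbb{R}$ taken in the covering-space topology, which I would identify with the sheaf of covariantly constant (i.e.\ $\nabla$-flat) sections of $(A_\rho)_\mathbb{R}$ regarded as a flat vector bundle. This identification is precisely where the Remark about the two topologies is used: a section is continuous for the discrete covering-space topology exactly when it is locally constant, and a smooth section of the vector bundle is locally constant exactly when it is $\nabla$-flat.

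First I would write down the complex of sheaves of twisted forms
\begin{equation*}
0 \to \Gamma((A_\rho)_\mathbb{R}) \xrightarrow{i} \Omega^0(-,(A_\rho)_\mathbb{R}) \xrightarrow{d_\nabla} \Omega^1(-,(A_\rho)_\mathbb{R}) \xrightarrow{d_\nabla} \cdots
\end{equation*}
where $i$ is the inclusion of flat sections and the differentials are the twisted exterior derivative $d_\nabla$ characterized in the preceding paragraph, and then prove that it is a resolution. The crux is the twisted Poincar\'e lemma, namely exactness at the stalk level. Since $(A_\rho)_\mathbb{R}$ is flat, every point has a contractible neighborhood $U$ over which the bundle trivializes as $U \times A_\mathbb{R}$ with $\nabla$ the product connection; on such $U$ the operator $d_\nabla$ is just the ordinary exterior derivative applied componentwise to $A_\mathbb{R}$-valued forms, so the classical Poincar\'e lemma gives exactness in positive degrees and identifies the degree-zero kernel with the constant $A_\mathbb{R}$-valued functions, i.e.\ the flat sections. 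This is the step I expect to be the main (if standard) obstacle, since it is the only place flatness of $\nabla$ is genuinely needed.

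Second, each sheaf $\Omega^k(-,(A_\rho)_\mathbb{R})$ is fine: it is a module over the sheaf of smooth functions, and $X$ (being a paracompact manifold) admits smooth partitions of unity, so these sheaves are acyclic. Hence the complex above is an acyclic resolution of $\Gamma((A_\rho)_\mathbb{R})$, and by the abstract de Rham theorem the cohomology of the complex of global sections $\Omega^*(X,(A_\rho)_\mathbb{R})$ with differential $d_\nabla$ --- which is exactly $H^*_\nabla(X,(A_\rho)_\mathbb{R})$ --- computes the sheaf cohomology $\breve{H}^*(X,\Gamma((A_\rho)_\mathbb{R}))$. All these identifications are canonical, arising from the natural inclusion of global flat sections and the connecting homomorphisms of the resolution, which is what delivers the \emph{canonical} isomorphism asserted in the statement.

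Finally I would invoke Proposition \ref{singcech}, applied to the real local system $(A_\rho)_\mathbb{R}$, to identify $\breve{H}^*(X,\Gamma((A_\rho)_\mathbb{R}))$ with the singular cohomology with local coefficients $H^*(X,(A_\rho)_\mathbb{R})$. Composing the two canonical isomorphisms yields $H^*_\nabla(X,(A_\rho)_\mathbb{R}) \simeq H^*(X,(A_\rho)_\mathbb{R})$ and completes the argument; the only nontrivial input beyond bookkeeping is the local triviality of the flat bundle feeding the twisted Poincar\'e lemma.
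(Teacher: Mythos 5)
Your proposal is correct and follows essentially the same route as the paper: identifying the covering-space sheaf $\Gamma((A_\rho)_\mathbb{R})$ with the sheaf of $\nabla$-flat sections, resolving it by the twisted de Rham complex, using acyclicity of the sheaves $\Omega^k(-,(A_\rho)_\mathbb{R})$, and concluding via the \v{C}ech--de Rham argument together with Proposition \ref{singcech}. You simply make explicit two steps the paper leaves implicit --- the stalkwise twisted Poincar\'e lemma via local flat trivializations, and fineness of the form sheaves --- which is a faithful filling-in rather than a different proof.
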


Next we recall that Poincar\'{e} duality has an extension to compact non-oriented manifolds using local coefficients:
\begin{prop}[Poincar\'{e} duality]
Let $X$ be a compact $n$-manifold. There is a local system $\mathbb{Z}_{\rm orn}$ with coefficients in $\mathbb{Z}$ given by assigning to $x \in X$ the group $H_n(X,X-\{x\},\mathbb{Z}) \simeq \mathbb{Z}$. For any local system $A_\rho$ There are canonical isomorphisms
\begin{equation*}
H^i(X, A_\rho) \simeq H_{n-i}(X , A_\rho \otimes \mathbb{Z}_{\rm orn}).
\end{equation*}
\end{prop}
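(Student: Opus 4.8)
The plan is to realize the claimed duality as cap product with a canonical twisted fundamental class and then to prove it is an isomorphism by the classical induction over open subsets, as in \cite{spa},\cite{hat1}, carrying the extra coefficient system $A_\rho$ and the orientation system $\mathbb{Z}_{\rm orn}$ throughout. The first step is to produce a fundamental class $\mu_X \in H_n(X,\mathbb{Z}_{\rm orn})$. By definition the stalk of $\mathbb{Z}_{\rm orn}$ at $x$ is $H_n(X,X-\{x\},\mathbb{Z})$, so there is a tautological generator in each local homology group $H_n(X,X-\{x\},\mathbb{Z}_{\rm orn}) \simeq H_n(X,X-\{x\},\mathbb{Z})\otimes(\mathbb{Z}_{\rm orn})_x$. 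The point is that, precisely because we twist by $\mathbb{Z}_{\rm orn}$, these local generators are compatible and assemble into a global class $\mu_X$; this is shown by the same local-to-global argument that yields the ordinary fundamental class for an oriented manifold, with the twisting absorbing the obstruction to orientability, so that every closed manifold is ``orientable with coefficients in $\mathbb{Z}_{\rm orn}$''.

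Next I would define the candidate duality map as cap product with $\mu_X$,
\begin{equation*}
D = (\,\cdot\,) \cap \mu_X : H^i(X,A_\rho) \to H_{n-i}(X, A_\rho \otimes \mathbb{Z}_{\rm orn}).
\end{equation*}
Here one uses that the cap product with local coefficients pairs $H^i(X,A_\rho)$ against $H_n(X,\mathbb{Z}_{\rm orn})$ and lands in homology with coefficients in the tensor product local system $A_\rho \otimes \mathbb{Z}_{\rm orn}$; this is a formal consequence of the definition of the cap product on the singular chain level for modules over $\mathbb{Z}[\pi_1(X)]$. The map $D$ is the one that will be shown to be an isomorphism, and it is visibly canonical once the canonical class $\mu_X$ has been fixed.

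To prove $D$ is an isomorphism I would pass to compactly supported cohomology so as to run the induction over open subsets. For an open set $U\subseteq X$ one uses the local fundamental class to define $D_U : H^i_c(U,A_\rho) \to H_{n-i}(U, A_\rho \otimes \mathbb{Z}_{\rm orn})$. When $U$ is homeomorphic to $\mathbb{R}^n$ both local systems trivialise and $U$ is orientable, so $D_U$ reduces to ordinary Poincaré duality and is an isomorphism. For a union $U\cup V$ one writes down the Mayer--Vietoris sequences in compactly supported cohomology and in homology; these fit into a ladder that commutes up to sign with $D_U, D_V, D_{U\cap V}$ as vertical maps, and the five lemma promotes the isomorphism to $U\cup V$. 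A direct limit argument handles arbitrary increasing unions. Since $X$ is compact it is a finite union of Euclidean charts, so the induction terminates, and the identification $H^*_c(X,-) = H^*(X,-)$ for compact $X$ yields the stated isomorphism $H^i(X,A_\rho)\simeq H_{n-i}(X,A_\rho\otimes\mathbb{Z}_{\rm orn})$.

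The main obstacle is bookkeeping rather than conceptual: one must verify that the twisted cap product is natural and compatible with the connecting homomorphisms of the two Mayer--Vietoris sequences, so that the comparison ladder genuinely commutes (up to the expected sign). This requires setting up the cap product carefully at the level of $\mathbb{Z}[\pi_1]$-module chain complexes and checking its interaction with restriction maps to open sets and with the boundary maps; once this naturality is in place, the rest of the argument is the standard Poincaré duality induction with the coefficient system $A_\rho \otimes \mathbb{Z}_{\rm orn}$ carried along unchanged.
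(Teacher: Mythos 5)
Your proposal is correct: it is the standard proof via cap product with the twisted fundamental class $\mu_X \in H_n(X,\mathbb{Z}_{\rm orn})$, followed by the compactly supported Mayer--Vietoris induction over charts. The paper itself offers no proof of this proposition --- it is stated as a recalled fact, with the standard references (\cite{hat1}, \cite{spa}) cited earlier for local coefficients --- and your argument is precisely the one given in those sources, so there is nothing to reconcile.
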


We would also like to point out the existence of a push-forward map for cohomology with local coefficients. For our purposes we really just need the following special case: let $\pi : E \to B$ be a fibre bundle with fibre $F$ a compact oriented manifold of dimension $d$. We get a local system $\Gamma_F$ where $\Gamma_F$ is the sheaf associated to the presheaf $U \mapsto H^d(\pi^{-1}(U) , \mathbb{Z})$. Since $F$ is oriented we see that $\Gamma_F$ is a local system with coefficient group $\mathbb{Z}$. If $A_\rho$ is any other local system on $B$ there is a push-forward map
\begin{equation*}
\pi_* : H^i(E,\pi^*(A_\rho)   ) \to H^{i-d}(B , A_\rho \otimes \Gamma_F).
\end{equation*}
One way to define the push-forward is through the Leray-Serre spectral sequence for the fibration $\pi : E \to B$ \cite{bh} in which case it corresponds to the composition
\begin{equation*}
H^i(E , \pi^*( A_\rho) ) \to E^{i-d,d}_\infty \to E^{i-d,d}_2 = H^{i-d}(B , A_\rho \otimes \Gamma_F).
\end{equation*}
In the case of de Rham cohomology twisted by a local system the push-forward really is an integration over the fibres. The integration picks up a sign ambiguity in choice of fibre orientation and this is accounted for by the local system $\Gamma_F$.


\section{Affine torus bundles}

\subsection{Classification}\label{affcl}

Let $T^n = \mathbb{R}^n/\mathbb{Z}^n$ be the standard $n$-dimensional torus. The classification of principal $T^n$-bundles is straightforward and very well known. Since we are interested in extending topological T-duality to the non-principal case we would like a classification of arbitrary torus bundles. This is a difficult question but there is an easier intermediate, namely affine torus bundles. For $n \le 3$ it turns out that every torus bundle admits an affine structure, so this gives a complete classification of $T^n$-bundles for $n \le 3$.\\

Let ${\rm Diff}(T^n)$ be the group of diffeomorphisms of $T^n$. Given a subgroup $G$ of ${\rm Diff}(T^n)$ we can consider torus bundles constructed out of transition maps valued in $G$. Principal $T^n$-bundles for example have transitions functions valued in $T^n$ which acts on itself by translation. We will consider a more general class of torus bundles, namely {\em affine torus bundles}, those built out of transition functions valued in the group ${\rm Aff}(T^n) = {\rm GL}(n,\mathbb{Z}) \ltimes T^n$ which acts on $T^n$ by affine transformations. It turns out that for $n \le 3$ the inclusion ${\rm Aff}(T^n) \to {\rm Diff}(T^n)$ is a homotopy equivalence, from which it follows that every $T^n$-bundle with $n \le 3$ admits an affine structure. The case $n=1$ is straightforward, the case $n=2$ was proved in \cite{eaee} and $n=3$ in \cite{hat}.\\

In what follows we will come to a classification of affine torus bundles. The result is that affine $T^n$-bundles correspond to pairs $(\rho , c)$ where $\rho : \pi_1(M) \to {\rm GL}(n,\mathbb{Z})$ is a homomorphism and $c \in H^2(M , \Lambda_\rho)$, where $\Lambda_\rho$ is a local system determined by $\rho$. We identify pairs $(\rho , c)$, $(\rho' , c')$ when they are related by the action of ${\rm GL}(n,\mathbb{Z})$.\\

Let $V = \mathbb{R}^n$ denote an $n$-dimensional vector space and $\Lambda \subset V$ a lattice of rank $n$. Let $\Lambda^* = {\rm Hom}_{\mathbb{Z}}(\Lambda , \mathbb{Z}) \subset V^*$ be the dual lattice. The cohomology ring $H^*(T^n , \mathbb{Z})$ of the torus $T^n = V/\Lambda$ can be identified with the exterior algebra $\wedge^* \Lambda^*$ of the dual lattice, that is the subring of $\wedge^* V^*$ generated over $\mathbb{Z}$ by $\Lambda^*$.\\

Let $\pi : E \to M$ be an affine $T^n$-bundle. Thus $E$ is associated to a principal ${\rm Aff}(T^n)$-bundle over $M$. Using the homomorphism ${\rm Aff}(T^n) \to {\rm GL}(n,\mathbb{Z})$ we get a principal ${\rm GL}(n,\mathbb{Z})$-bundle over $M$ or equivalently a representation $\rho : \pi_1(M) \to {\rm GL}(n,\mathbb{Z})$ called the {\em monodromy representation} of $E$. The monodromy representation has a more direct interpretation as we now explain. Let $\tilde{M} \to M$ be the corresponding principal ${\rm GL}(n,\mathbb{Z})$-bundle determined by $\rho$ and let $\Lambda_\rho$ be the local system of groups $\Lambda_\rho = \tilde{M} \times_{{\rm GL}(n,\mathbb{Z})} \Lambda$. It is not hard to see that the dual local system $\Lambda^*_\rho$ is the sheaf associated to the presheaf which maps an open set $U \subseteq M$ to $H^1(\pi^{-1}(U),\mathbb{Z})$. More generally we get a local system by taking fibre cohomology of any degree. Since $E$ is a torus bundle the sheaf associated to the presheaf $U \mapsto H^k(\pi^{-1}(U),\mathbb{Z})$ can be identified with $\wedge^k \Lambda^*_\rho$.\\

As above we let $\tilde{M}$ be the principal ${\rm GL}(n,\mathbb{Z})$-bundle corresponding to $\rho$. Let $\tilde{E}$ be the pull-back of $E$ to a torus bundle on $\tilde{M}$. The local system $\Lambda_\rho$ pulled back to $\tilde{M}$ becomes trivial. It follows that $\tilde{E}$ admits the structure of a principal $T^n$-bundle compatible with the existing affine structure. Moreover, since $\tilde{E}$ is a pull-back by the map $\pi : \tilde{M} \to M$ is follows that there is a right action of ${\rm GL}(n,\mathbb{Z})$ on $\tilde{E}$ by fibre bundle automorphisms. The action of ${\rm GL}(n,\mathbb{Z})$ on $\tilde{E}$ mixes up the principal bundle action of $T^n$ on $\tilde{E}$ according to the usual action of ${\rm GL}(n,\mathbb{Z})$ as group automorphisms of $T^n$. To put it more concretely $\tilde{E} \to M$ is a principal ${\rm GL}(n,\mathbb{Z}) \ltimes T^n$-bundle, that is a principal ${\rm Aff}(T^n)$-bundle over $M$. Of course $E$ is the associated fibre bundle $E = \tilde{E} \times_{ {\rm Aff}(T^n) } T^n$.\\

Let $\tilde{E} \to M$ be a principal ${\rm Aff}(T^n)$-bundle over $M$. We identify ${\rm Aff}(T^n)$ with ${\rm GL}(n,\mathbb{Z}) \times T^n$ with group law
\begin{equation*}
(g , t )(g' , t') = (gg' , t g(t')).
\end{equation*}
There is an open cover $\{ U_i \}$ of $M$ such that $\tilde{E}$ admits trivializations $U_i \times {\rm Aff}(T^n)$ with transition maps $h_{ij} : U_{ij} \to {\rm Aff}(T^n)$ of the form 
\begin{equation*}
h_{ij} = g_{ij} t_{ij} = (g_{ij} , g_{ij}(t_{ij}) )
\end{equation*}
where $g_{ij} : U_{ij} \to {\rm GL}(n,\mathbb{Z})$ are transition functions for the underlying principal ${\rm GL}(n,\mathbb{Z})$-bundle $\tilde{M} \to M$ and $t_{ij}$ are maps $t_{ij} : U_{ij} \to T^n$.\\
We identify the trivializations $U_i \times {\rm Aff}(T^n)$ with $U_i \times {\rm GL}(n,\mathbb{Z}) \times T^n$ by multiplication ${\rm GL}(n,\mathbb{Z}) \times T^n \to {\rm Aff}(T^n)$. Under this identification the transition maps $h_{ij}$ act as
\begin{equation}\label{htran}
h_{ij}(u,g,v) = (u , g_{ij}(u)g , t_{ij}(u,g)v )
\end{equation}
where $t_{ij}(u,g)$ is defined to be $g^{-1}t_{ij}(u)$. One checks that $t_{ij}(u,g) : U_{ij} \times {\rm GL}(n,\mathbb{Z}) \to T^n$ are the transition maps defining $\tilde{E}$ as principal $T^n$-bundle over $\tilde{M}$.\\

Conversely given a principal ${\rm GL}(n,\mathbb{Z})$-bundle $\tilde{M} \to M$ defined by transition functions $g_{ij} : U_{ij} \to {\rm GL}(n,\mathbb{Z})$ and functions $t_{ij}: U_{ij} \times {\rm GL}(n,\mathbb{Z}) \to T^n$ satisfying the conditions:
\begin{itemize}
\item{$t_{ii} = 1$ on $U_i$}
\item{$t_{ij}(u,g) t_{ji}(u,g_{ij}(u)g) = 1$ on $U_{ij}$}
\item{$t_{ij}(u,g) t_{jk}(u , g_{ij}(u)g) = t_{ik}(u,g)$ on $U_{ijk}$ and}
\item{$t_{ij}(u,gh) = h^{-1}t_{ij}(u,g)$}
\end{itemize}
we construct a principal ${\rm Aff}(T^n)$-bundle $\tilde{E} \to M$ with transition maps (\ref{htran}).\\

Assume now that the cover $\{ U_i \}$ is such that on double intersections we can find lifts $\tilde{t}_{ij} : U_{ij} \times {\rm GL}(n,\mathbb{Z}) \to \mathbb{R}^n$ of the maps $t_{ij} : U_{ij} \times {\rm GL}(n,\mathbb{Z}) \to T^n$. We may also assume the lifts $\tilde{t}_{ij}$ are equivariant in that $\tilde{t}_{ij}(u,gh) = h^{-1} \tilde{t}_{ij}(u,g)$. The coboundary $c_{ijk} = \tilde{t}_{ij} + \tilde{t}_{jk} + \tilde{t}_{ki}$ is a map $c_{ijk} : U_{ijk} \times {\rm GL}(n,\mathbb{Z}) \to \Lambda$ which is likewise equivariant. Thus $\{ c_{ijk} \}$ determines an element in $H^2(M,\Lambda_\rho)$, degree $2$ cohomology with local coefficients. It is not hard to see that the cohomology class of $\{ c_{ijk} \}$ is independent of the choice of cover and choice of local trivializations. Thus associated to any principal ${\rm Aff}(T^n)$-bundle $\tilde{E} \to M$ is a class $c \in H^2(M,\Lambda_\rho)$ which we call the {\em twisted Chern class} of $\tilde{E}$.

\begin{prop}
For every representation $\rho : \pi_1(M) \to {\rm GL}(n,\mathbb{Z})$ and class $c \in H^2(M,\Lambda_\rho)$ there is a principal ${\rm Aff}(T^n)$-bundle over $M$ with monodromy $\rho$ and twisted Chern class $c$.
\begin{proof}
Let $p : \tilde{M} \to M$ be the principal ${\rm GL}(n,\mathbb{Z})$-bundle corresponding to $\rho$. If we take the exact sequence of sheaves on $\tilde{M}$
\begin{equation*}
1 \to \Lambda \to \mathcal{C}( \mathbb{R}^n) \to \mathcal{C}( {\rm U}(1)^n ) \to 1
\end{equation*}
and quotient by ${\rm GL}(n,\mathbb{Z})$ we get a corresponding exact sequence of local systems on $M$ of the form
\begin{equation*}
1 \to \Lambda_\rho \to \mathcal{C}( \mathbb{R}_\rho^n) \to \mathcal{C}( {\rm U}(1)_\rho^n ) \to 1.
\end{equation*}
Now since $\mathcal{C}( \mathbb{R}_\rho^n)$ is a fine sheaf we have an isomorphism $H^2(M,\Lambda_\rho) = H^1(M , \mathcal{C}( {\rm U}(1)_\rho^n ) )$. Elements of $H^1(M , \mathcal{C}( {\rm U}(1)_\rho^n ) )$ can be represented as follows. There is an open cover $\{ U_{ij} \}$ of $M$ such that $p^{-1}(U_i) = U_i \times {\rm GL}(n,\mathbb{Z})$ and a collection $\{ t_{ij} \}$ of functions $t_{ij} : p^{-1}(U_{ij}) = U_{ij} \times {\rm GL}(n,\mathbb{Z}) \to {\rm U}(1)^n$. The functions $\{ t_{ij} \}$ satisfy precisely the conditions required to construct a principal ${\rm Aff}(T^n)$-bundle $\tilde{E}$. Moreover the corresponding element $c \in H^2(M,\Lambda_\rho)$ is easily seen to be the twisted Chern class of $\tilde{E}$. 
\end{proof}
\end{prop}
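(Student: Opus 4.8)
The plan is to run in reverse the correspondence established just before the statement, which passes from a principal ${\rm Aff}(T^n)$-bundle to its pair $(\rho, c)$. I would first fix the monodromy by taking the principal ${\rm GL}(n,\mathbb{Z})$-bundle $p : \tilde{M} \to M$ determined by $\rho$, and then produce transition functions $t_{ij}$ realizing the prescribed class $c$. The key is to recognize $c \in H^2(M,\Lambda_\rho)$ as the obstruction coming from an exponential-type short exact sequence, so that any representing cocycle automatically satisfies the four conditions required to build $\tilde{E}$ via (\ref{htran}).

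Concretely, I would begin from the fibrewise exponential sequence of sheaves on $\tilde{M}$,
\begin{equation*}
1 \to \Lambda \to \mathcal{C}(\mathbb{R}^n) \to \mathcal{C}({\rm U}(1)^n) \to 1,
\end{equation*}
which is ${\rm GL}(n,\mathbb{Z})$-equivariant for the standard actions on $\Lambda$, $\mathbb{R}^n$ and ${\rm U}(1)^n$. Quotienting by ${\rm GL}(n,\mathbb{Z})$ turns it into a short exact sequence of local systems on $M$,
\begin{equation*}
1 \to \Lambda_\rho \to \mathcal{C}(\mathbb{R}_\rho^n) \to \mathcal{C}({\rm U}(1)_\rho^n) \to 1.
\end{equation*}
Since the middle sheaf is fine its higher cohomology vanishes, so the connecting homomorphism in the long exact sequence gives a canonical isomorphism $H^1(M,\mathcal{C}({\rm U}(1)_\rho^n)) \to H^2(M,\Lambda_\rho)$. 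I would then represent the given class $c$ by a \v{C}ech $1$-cocycle $\{t_{ij}\}$ on a sufficiently fine cover, with each $t_{ij}$ an equivariant map $p^{-1}(U_{ij}) = U_{ij} \times {\rm GL}(n,\mathbb{Z}) \to {\rm U}(1)^n$.

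Having produced $\{t_{ij}\}$, the next step is to check that these satisfy exactly the four cocycle conditions listed before the proposition, so that the recipe (\ref{htran}) yields a genuine principal ${\rm Aff}(T^n)$-bundle $\tilde{E}$ with monodromy $\rho$: the equivariance built into the cocycle gives $t_{ij}(u,gh) = h^{-1}t_{ij}(u,g)$, while the \v{C}ech cocycle condition in $\mathcal{C}({\rm U}(1)_\rho^n)$ translates directly into the triple-intersection identity. Finally I would confirm that the twisted Chern class of $\tilde{E}$ equals $c$. This is automatic once one observes that its defining recipe — lift $t_{ij}$ to $\tilde{t}_{ij}$ valued in $\mathbb{R}^n$ and form $c_{ijk} = \tilde{t}_{ij} + \tilde{t}_{jk} + \tilde{t}_{ki}$ — is literally the computation of the connecting homomorphism on the class of $\{t_{ij}\}$, with the lifts existing because $\mathcal{C}(\mathbb{R}_\rho^n)$ is acyclic.

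The main obstacle I anticipate is bookkeeping the ${\rm GL}(n,\mathbb{Z})$-equivariance throughout. One must verify that cohomology of the local system $\mathcal{C}({\rm U}(1)_\rho^n)$ on $M$ is genuinely computed by \v{C}ech cochains of equivariant ${\rm U}(1)^n$-valued functions on the pulled-back cover of $\tilde{M}$, and that the lifts $\tilde{t}_{ij}$ can be chosen equivariantly, so that $c_{ijk}$ lands in $\Lambda_\rho$ rather than merely in $\Lambda$. Once this equivariance is tracked consistently, identifying the connecting homomorphism with the twisted Chern class construction is the crux, and the remaining verifications of the cocycle conditions are routine.
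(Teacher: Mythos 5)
Your proposal is correct and follows essentially the same route as the paper's own proof: both pass to the ${\rm GL}(n,\mathbb{Z})$-bundle $\tilde{M}$, quotient the fibrewise exponential sequence to get $1 \to \Lambda_\rho \to \mathcal{C}(\mathbb{R}^n_\rho) \to \mathcal{C}({\rm U}(1)^n_\rho) \to 1$, use fineness of the middle sheaf to identify $H^2(M,\Lambda_\rho)$ with $H^1(M,\mathcal{C}({\rm U}(1)^n_\rho))$, and read off the bundle from an equivariant cocycle representative, identifying the connecting homomorphism with the twisted Chern class construction. Your extra attention to the equivariance bookkeeping and to why the lifts $\tilde{t}_{ij}$ can be chosen equivariantly is a reasonable elaboration of what the paper leaves implicit.
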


If $E \to M$ is an affine torus bundle then we have seen that there is a principal ${\rm GL}(n,\mathbb{Z})$-bundle $p : \tilde{M} \to M$ such that $\tilde{E} = p^*(E)$ admits the structure of a principal ${\rm Aff}(T^n)$-bundle and that $E$ is the associated bundle $E = \tilde{E} \times_{{\rm Aff}(T^n)} T^n$. Associated to $\tilde{E}$ is a twisted Chern class $c \in H^2(M,\Lambda_\rho)$. However distinct principal ${\rm Aff}(T^n)$-bundles may give rise to the same $T^n$-bundle, so different classes in $H^2(M,\Lambda_\rho)$ may be associated to the same $T^n$-bundle. The following proposition determines precisely when this happens:

\begin{prop}\label{glact}
If $M$ is connected then two affine torus bundles $E \to M$, $E' \to M$ with monodromy representations $\rho,\rho'$ and twisted Chern classes $c,c'$ are isomorphic as $T^n$-bundles if and only if the pairs $(\rho,c)$ and $(\rho',c')$ are related by the action of an element of ${\rm GL}(n,\mathbb{Z})$.
\begin{proof}
Consider the case where $E,E' \to M$ are in fact principal $T^n$-bundles. There exists an open cover $\{ U_i \}$ such that we have local trivializations $U_i \times T^n$ for $E,E'$ with transition functions $t_{ij},t'_{ij} : U_{ij} \to T^n$. We can assume the cover is such that $t_{ij},t'_{ij}$ admit lifts $\tilde{t}_{ij},\tilde{t}'_{ij} : U_{ij} \to \mathbb{R}^n$. The Chern classes of $E,E'$ are then represented by $c_{ijk} = \tilde{t}_{ij} + \tilde{t}_{jk} + \tilde{t}_{ki}$ and $c'_{ijk} = \tilde{t}'_{ij} + \tilde{t}'_{jk} + \tilde{t}'_{ki}$.\\

Now suppose $\phi : E \to E'$ is a bundle isomorphism, but not necessarily a principal bundle isomorphism. In local trivializations $\phi$ is given by maps $\phi_i : U_i \times T^n \to U_i \times T^n$ fitting in to commuative diagrams
\begin{equation*}\xymatrix{
U_{ij} \times T^n \ar[r]^{\phi_j} \ar[d]^{t_{ij}} & U_{ij} \times T^n \ar[d]^{t'_{ij}} \\
U_{ij} \times T^n \ar[r]^{\phi_i}  & U_{ij} \times T^n
}
\end{equation*}
passing to homology we see that $(\phi_i)_* = (\phi_j)_* : \Lambda \to \Lambda$ so in each trivialization $(\phi_i)_*$ acts as some fixed $g \in {\rm GL}(n,\mathbb{R})$.\\

A homeomorphism $T^n \to T^n$ lifts to a homeomorphism $\mathbb{R}^n \to \mathbb{R}^n$ because $\mathbb{R}^n$ is the universal cover of $T^n$. In particular we can lift the maps $\phi_i$ to $\tilde{\phi}_i : U_i \times \mathbb{R}^n \to U_i \times \mathbb{R}^n$. The commutations relations $\phi_i t_{ij} = t'_{ij} \phi_j$ lift up to a map $\mathbb{R}^n \to \mathbb{R}^n$ which is the identity on $T^n$. Such a map must be translation by an element of $\Lambda$. If we assume the $U_{ij}$ are simply connected then we can find $\tau_{ij} \in \Lambda$ such that
\begin{equation*}
\tilde{\phi}_i \tilde{t}_{ij} \tau_{ij} = \tilde{t}'_{ij} \tilde{\phi}_j
\end{equation*}
where we identify an element of $\mathbb{R}^n$ with the corresponding translation $\mathbb{R}^n \to \mathbb{R}^n$. Now observe that
\begin{eqnarray*}
\tilde{\phi}_i c_{ijk} \tau_{ij} \tau_{jk} \tau_{ki} &=& \tilde{\phi}_i \tilde{t}_{ij} \tau_{ij} \tilde{t}_{jk} \tau_{jk} \tilde{t}_{ki} \tau_{ki} \\
&=& \tilde{t}'_{ij} \tilde{\phi}_j \tilde{t}_{jk} \tau_{jk} \tilde{t}_{ki} \tau_{ki} \\
&=& \tilde{t}'_{ij} \tilde{t}'_{jk} \tilde{\phi}_k \tilde{t}_{ki} \tau_{ki} \\
&=& \tilde{t}'_{ij} \tilde{t}'_{jk} \tilde{t}'_{ki} \tilde{\phi}_i \\
&=& c'_{ijk} \tilde{\phi}_i.
\end{eqnarray*}
As elements of the \v{C}ech cochain complex we can write this as
\begin{equation*}
c_{ijk} + (\delta \tau)_{ijk} = \tilde{\phi}_i^{-1}(c'_{ijk})
\end{equation*}
but note that since $c'_{ijk}$ is valued in $\Lambda$ we have that $\tilde{\phi}_i^{-1}(c'_{ijk}) = g^{-1} c'_{ijk}$. We have thus shown that the Chern classes of $E,E'$ are related by an element of ${\rm GL}(n,\mathbb{Z})$.\\

In the more general case of affine torus bundles a similar proof applies.
\end{proof}
\end{prop}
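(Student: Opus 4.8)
The plan is to show that the assignment of the pair $(\rho,c)$ to an affine $T^n$-bundle induces a bijection between fibre-bundle isomorphism classes and ${\rm GL}(n,\mathbb{Z})$-orbits of pairs, where $g \in {\rm GL}(n,\mathbb{Z})$ acts by $g\cdot(\rho,c) = (g\rho g^{-1}, g_*c)$. Existence of a bundle for each pair is already established, so I need only the two implications. For the forward direction I would first extract $g$ from the action on fibre homology. A bundle isomorphism $\phi : E \to E'$ restricts on each fibre to a homeomorphism $T^n \to T^n$ and hence induces an isomorphism on fibre homology $H_1(\cdot,\mathbb{Z}) = \Lambda$. Since the excerpt identifies $\Lambda^*_\rho$ (and dually $\Lambda_\rho$) with the local system of fibre cohomology, $\phi$ induces an isomorphism of local systems $\Lambda_\rho \cong \Lambda_{\rho'}$. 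Over connected $M$ such an isomorphism is a single $\pi_1$-equivariant lattice automorphism, i.e. an element $g \in {\rm GL}(n,\mathbb{Z})$ with $\rho'(\gamma) = g\rho(\gamma)g^{-1}$, so the monodromies are related by $g$.

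It remains to compare twisted Chern classes. Here I would work over a common trivializing cover and represent $c,c'$ by the Čech cocycles $c_{ijk} = \tilde{t}_{ij} + \tilde{t}_{jk} + \tilde{t}_{ki}$ and $c'_{ijk}$ built from equivariant lifts of the transition functions, exactly as in the construction of the twisted Chern class. Writing $\phi$ in local trivializations as maps $\phi_i$ whose fibrewise homology action is $g$, I lift each $\phi_i$ to the universal cover $\mathbb{R}^n$ of the fibre. The intertwining relations $\phi_i \circ t_{ij} = t'_{ij} \circ \phi_j$ lift to equalities that differ by translations by lattice vectors $\tau_{ij} \in \Lambda$, since the discrepancy descends to the identity on $T^n$. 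A direct cocycle computation then yields $c_{ijk} + (\delta\tau)_{ijk} = g^{-1}c'_{ijk}$ at the level of $\Lambda$-valued cochains, hence $[c] = g^{-1}_*[c']$ in $H^2(M,\Lambda_\rho)$, which is precisely the relation $(\rho',c') = g\cdot(\rho,c)$.

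For the reverse direction I would use the fibrewise $g$-action directly. Given $(\rho',c') = g\cdot(\rho,c)$, applying the diffeomorphism $g : T^n \to T^n$ fibrewise to the bundle with data $(\rho,c)$ produces a bundle whose transition data is conjugated by $g$, hence one with monodromy $g\rho g^{-1}$ and twisted Chern class $g_*c$; this fibrewise construction is manifestly a fibre-bundle isomorphism. Combined with the fact that the pair $(\rho,c)$ determines the affine bundle up to isomorphism — which follows by re-running the construction of the preceding proposition and checking that cohomologous cocycles yield isomorphic bundles — this assembles the desired isomorphism $E \cong E'$.

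The main obstacle I anticipate is not the principal case but the general affine one. In the principal case $\rho$ is trivial, $g$ is a globally constant lattice automorphism, and the Čech computation is transparent; but in the affine case every lift, every cochain, and the identification $H^2(M,\Lambda_\rho) \cong H^2(M,\Lambda_{\rho'})$ must be handled ${\rm GL}(n,\mathbb{Z})$-equivariantly, so that the resulting cocycle identity genuinely lives in cohomology with the local coefficient system $\Lambda_\rho$ rather than in ordinary cohomology. The delicate bookkeeping is verifying that the equivariance conditions on the $t_{ij}$ and their lifts $\tilde{t}_{ij}$ are preserved throughout, and that $g$ interacts correctly with the monodromy when comparing the two local systems.
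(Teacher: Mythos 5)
Your proposal is correct and takes essentially the same approach as the paper: extract a constant $g \in {\rm GL}(n,\mathbb{Z})$ from the action of $\phi$ on fibre homology, lift the local expressions of $\phi$ and the transition functions to $\mathbb{R}^n$, record the discrepancies as lattice translations $\tau_{ij} \in \Lambda$, and derive the cocycle identity $c_{ijk} + (\delta\tau)_{ijk} = g^{-1}c'_{ijk}$. Your extra care (the explicit comparison of monodromies as an isomorphism of local systems, and the explicit converse via the fibrewise action of $g$ together with the fact that $(\rho,c)$ determines the bundle) fills in points the paper leaves implicit, and, like the paper, you defer rather than carry out the equivariant bookkeeping in the general affine case.
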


{\bf Classification of circle bundles:} consider the special case of circle bundles over $M$. Since every circle bundle $\pi : E \to M$ has an affine structure they are determined by pairs $(\xi,c)$ where $\xi$ is a homomorphism $\xi : \pi_1(M) \to {\rm GL}(1,\mathbb{Z}) = \mathbb{Z}_2$. Thus $\xi$ is an element of $H^1(M,\mathbb{Z}_2)$. The class $\xi$ measures the orientability of $E \to M$ so we call it the {\em first Stiefel-Whitney class} of $E$ and write $\xi = w_1(E)$. In the differentiable case $w_1(E)$ is the first Stiefel-Whitney class of the vertical bundle ${\rm Ker}(\pi_*)$.

Let $\mathbb{Z}_\xi$ be the local system with $\mathbb{Z}$-coefficients determined by $\xi$. So the twisted Chern class of a circle bundle with monodromy $\xi$ is an element of $H^2(M,\mathbb{Z}_\xi)$.

We conclude that circle bundles correspond to pairs $(\xi,c)$ with $\xi \in H^1(M,\mathbb{Z}_2)$ and $c \in H^2(M,\mathbb{Z}_\xi)$ modulo the action of ${\rm GL}(1,\mathbb{Z}) = \mathbb{Z}_2$ where $-1$ sends a pair $(\xi,c)$ to $(\xi , -c)$.


\subsection{Leray-Serre spectral sequence}

For T-duality we will need to know the differentials on the $E_2$ stage of the Leray-Serre spectral sequence for affine torus bundles. This section is dedicated to a proof of their structure, as given in Proposition \ref{ssdif}.\\

Let $\pi : E \to M$ be an affine $T^n$-bundle over $M$. We assume throughout that $M$ is connected. We can associate to $E$ a monodromy representation $\rho : \pi_1(M) \to {\rm GL}(n,\mathbb{Z})$ and a twisted Chern class $c \in H^2(M, \Lambda_\rho)$. As we have shown in Proposition \ref{glact} the pair $(\rho,c)$ is only unique up to the action of ${\rm GL}(n,\mathbb{Z})$.\\

Much information about the cohomology ring $H^*(E,\mathbb{Z})$ can be obtained by the Leray-Serre spectral sequence of the fibration $\pi : E \to M$. Since we do not assume the action of $\pi_1(M)$ on the cohomology of the fibres is trivial we must use cohomology with local coefficients. The $E_2$ page of the spectral sequence is given by
\begin{equation*}
E_2^{p,q} = H^p(M , \wedge^q \Lambda^*_\rho).
\end{equation*}
The ring structure on $E_2$ is the natural multiplication
\begin{equation*}
H^p(M,\wedge^q \Lambda^*_\rho) \otimes H^{p'}(M,\wedge^{q'} \Lambda^*_\rho) \to H^{p+p'}(M,\wedge^{q+q'} \Lambda^*_\rho)
\end{equation*}
obtained from the wedge product on $\wedge^* \Lambda_\rho^*$.\\

Recall that the fibration $E \to M$ induces a filtration on $H^m(E)$
\begin{equation*}
0 = F^{m+1,m} \subseteq F^{m,m} \subseteq F^{m-1,m} \subseteq \dots \subseteq F^{0,m} = H^m(E)
\end{equation*}
such that the spectral sequence converges to the associated graded ring
\begin{equation*}
{\rm Gr}(H^*(E)) = \bigoplus_{p,m} \left( F^{p,m} / F^{p+1,m} \right)
\end{equation*}
or more specifically,
\begin{equation*}
E_\infty^{p,q} = F^{p,p+q}/F^{p+1,p+q}.
\end{equation*}

Of particular interest are the bottom row ($q = 0$) and top row ($q = n$) of the spectral sequence for we have natural maps
\begin{equation}\label{pull}
H^p(M,\mathbb{Z}) = E_2^{p,0} \to E_\infty^{p,0} = F^{p,p} \to H^p(E,\mathbb{Z})
\end{equation}
and
\begin{equation}\label{push}
H^{n+p}(E,\mathbb{Z}) \to F^{p,n+p}/F^{p+1,n+p} = E_\infty^{p,n} \to E_2^{p,n} = H^p(M , \wedge^n \Lambda^*_\rho ).
\end{equation}

It is well known that the map (\ref{pull}) is the pull-back $\pi^* : H^p(M,\mathbb{Z}) \to H^p(E,\mathbb{Z})$. As we have already argued the map (\ref{push}) is the push-forward $\pi_* : H^{n+p}(M,\mathbb{Z}) \to H^p(M, \wedge^n \Lambda^*_\rho )$.\\

We will need a description of the differentials for the $E_2$ stage of the spectral sequence. Consider first the case of a principal $T^n$-bundle. Here the monodromy $\rho$ is trivial and we thus have $E_2^{p,q} = H^p(M,\mathbb{Z}) \otimes \wedge^q \Lambda^*$. In particular consider the differential $d_2 : H^0(M , \mathbb{Z}) \otimes \Lambda^* \to H^2(M,\mathbb{Z})$. This is an element of ${\rm Hom}_{\mathbb{Z}}( \Lambda^* , H^2(M , \mathbb{Z})) = H^2(M , \Lambda)$. It is not hard to see that this is precisely the Chern class $c \in H^2(M , \Lambda)$. Now we can completely determine the differentials $d_2 : E_2^{p,q} \to E_2^{p+2,q-1}$ using the derivation property. Indeed $d_2 : H^p(M , \mathbb{Z}) \otimes \wedge^q \Lambda^* \to H^{p+2}(M,\mathbb{Z}) \otimes \wedge^{q-1} \Lambda^*$ is given $d_2(x) = c \smallsmile x$ where $c \smallsmile x$ is obtained by using the product in cohomology and the contraction $\Lambda \otimes \wedge^q \Lambda^* \to \wedge^{q-1} \Lambda^*$. More precisely if $a \in H^2(M,\mathbb{Z})$, $b \in \Lambda$, $c \in H^p(M,\mathbb{Z})$, $d \in \wedge^q \Lambda^*$ then we define
\begin{equation*}
(a \otimes b) \smallsmile (c \otimes d) = (-1)^p (a \smallsmile b) \otimes ( i_b d)
\end{equation*}
where $i_b d$ denotes the contraction of $d$ by $b$.\\

For affine $T^n$-bundles with non-trivial monodromy $\rho$ one might guess that the differentials $d_2 : H^p(M , \wedge^q \Lambda_\rho^*) \to H^{p+2}(M, \wedge^{q-1} \Lambda_\rho^*)$ are similarly given by $d_2(x) = c \smallsmile x$ where $c$ is the twisted Chern class $F \in H^2(M , \Lambda_\rho)$ and $c \smallsmile x$ is the combination of the product in cohomology with contraction. In fact this is the case however the proof is more subtle since generally the product $H^0(M , \wedge^q \Lambda_\rho^*) \otimes H^p(M , \mathbb{Z}) \to H^p(M , \wedge^q \Lambda^*_\rho )$ need not be surjective. 

\begin{prop}\label{ssdif}
The differentials $d_2 : H^p(M , \wedge^q \Lambda_\rho^*) \to H^{p+2}(M, \wedge^{q-1} \Lambda_\rho^*)$ for the Leray-Serre spectral sequence are given by the product with the twisted Chern class followed by the contraction $\Lambda_\rho \otimes \wedge^q \Lambda_\rho^* \to \wedge^{q-1} \Lambda_\rho^*$.
\begin{proof}
To prove this result we will step back and examine the construction of the Leray-Serre spectral sequence. The approach that is the most convenient for us uses a combination of \v{C}ech and singular cohomology \cite{botttu}.

Let $\mathcal{U} = \{ U_i \}_{i \in J}$ be a good cover of $M$, that is each $U_i$ as well as the multiple intersections $U_{i_0 i_1 \dots i_p} = U_{i_0} \cap U_{i_1} \cap \dots \cap U_{i_p}$ are all contractible. We also assume $J$ is a countable ordered set.\\

Let $\pi : E \to M$ be a fibre bundle over $M$ and let $\pi^{-1}\mathcal{U} = \{ \pi^{-1}(U_i) \}_{i \in J}$ be the the pull-back cover. We note that $\pi^{-1}(U_{i_0}) \cap \pi^{-1}(U_{i_1}) \cap \dots \cap \pi^{-1}(U_{i_p}) = \pi^{-1}( U_{i_0 i_1 \dots i_p })$. Consider the double complex $K^{*,*}$ given by
\begin{equation*}
K^{p,q} = C^p( \pi^{-1}\mathcal{U} , S^q ) = \prod_{ i_0 < i_1 < \dots < i_p } S^q( \pi^{-1}(U_{i_0 i_1 \dots i_p }) , \mathbb{Z} )
\end{equation*}
where $S^q$ denotes the singular cochain complex. There are two commuting differentials $\delta_S,\delta_C$ on $K^{*,*}$, namely the differential $\delta_S$ coming from the singular cochain complex and the differential $\delta_C$ coming from the \v{C}ech cochain complex. If we let $D' = \delta_C$, $D'' = (-1)^p \delta_S$ then $D = D' + D''$ is a differential $ D : K^i \to K^{i+1}$ where $K^i = \bigoplus_{p+q=i} K^{p,q}$. In \cite{botttu} it is shown that the cohomology of $(K^* , D)$ is the singular cohomology $H^*(E,\mathbb{Z})$ of $E$. The Leray-Serre spectral sequence can be derived as the spectral sequence associated to this double complex.\\

Now let $\pi : E \to M$ be an affine $T^n$-bundle over $M$ with monodromy representation $\rho$. Let $p : \tilde{M} \to M$ be the principal ${\rm GL}(n,\mathbb{Z})$-bundle corresponding to $\rho$, so $\tilde{E} = p^{-1}E$ admits the structure of a principal $T^n$-bundle over $\tilde{M}$ and is furthermore a principal ${\rm Aff}(T^n)$-bundle over $M$. We have a commutative diagram of the form
\begin{equation*}\xymatrix{
\tilde{E} \ar[r]^{\tilde{p}} \ar[d]^{\tilde{\pi}} \ar[dr]^q & E \ar[d]^\pi \\
\tilde{M} \ar[r]^p & M
}
\end{equation*}

There are local trivializations $q^{-1}(U_i) \simeq U_i \times {\rm GL}(n,\mathbb{Z}) \times T^n$ which under the identification $m : {\rm GL}(n,\mathbb{Z}) \times T^n \to {\rm Aff}(T^n)$ given by multiplication become trivializations of $\tilde{E} \to M$ as a principal ${\rm Aff}(T^n)$-bundle.

Let $t_{ij} : U_{ij} \times {\rm GL}(n,\mathbb{Z}) \to T^n$ be transition functions for the $T^n$-bundle $\tilde{E} \to \tilde{M}$. We can take $t_{ij}$ to be equivariant in the sense that $t_{ij}(u,g) = g^{-1} t_{ij}(u,1)$. Write $t_{ij}(u) = t_{ij}(u,1)$. The let $g_{ij} : U_{ij} \to {\rm GL}(n,\mathbb{Z})$ be transition functions for $\tilde{M} \to M$. It follows that the ${\rm Aff}(T^n)$-bundle $\tilde{E} \to M$ has transition functions $h_{ij}(u) = g_{ij}(u) t_{ij}(u)$.\\

Since $\mathcal{U}$ is a good cover we can find lifts $\tilde{t}_{ij} : U_{ij} \to \mathbb{R}^n$ of the $t_{ij}$ which extend to equivariant maps $\tilde{t}_{ij} : U_{ij} \times {\rm GL}(n,\mathbb{Z}) \to \mathbb{R}^n$. Moreover we get equivariant maps $c_{ijk} : U_{ijk} \times {\rm GL}(n,\mathbb{Z}) \to \Lambda$ given by $c_{ijk} = \tilde{t}_{ij} + \tilde{t}_{jk} + \tilde{t}_{ki}$. It is clear that $\{ c_{ijk} \}$ represents the twisted Chern class for $E \to M$.\\

We now have two fibre bundles $\pi : E \to M$ and $\tilde{\pi} : \tilde{E} \to \tilde{M}$ and two open covers $\mathcal{U},p^{-1}\mathcal{U}$ of the bases $M,\tilde{M}$ so we get two associated double complexes. We denote these by $K^{p,q}(\pi)$, $K^{p,q}(\tilde{\pi})$ and the associated spectral sequences $E_r^{p,q}(\pi)$, $E_r^{p,q}(\tilde{\pi})$. Note that the action of ${\rm GL}(n,\mathbb{Z})$ on $\tilde{M}$ lifts to an action on $\tilde{E}$ and this action restricts to an action on the subsets $q^{-1}( U_{i_0 i_1 \dots i_p} )$ lifting the action on $p^{-1}( U_{i_0 i_1 \dots i_p} )$. This induces an action on the spaces $S^q( q^{-1}(U_{i_0 i_1 \dots i_p }))$ of singular cochains and thus an action on $K^{p,q}(\tilde{\pi})$ commuting with the differentials. If $U_{i_0 i_1 \dots i_p}$ is non-empty then $p^{-1}( U_{i_0 i_1 \dots i_p} )$ is isomorphic to $U_{i_0 i_1 \dots i_p} \times {\rm GL}(n,\mathbb{Z})$ as a principal ${\rm GL}(n,\mathbb{Z})$-bundle. It follows that $K^{p,q}(\pi)$ can be identified with the ${\rm GL}(n,\mathbb{Z})$-invariant subspace of $K^{p,q}(\tilde{\pi})$.\\

We need also to consider the $E_1$-stage of the spectral sequences. It is clear that $E_1^{p,q}(\pi) = C^p( \mathcal{U} , \wedge^q \Lambda^*_\rho )$ and that $E_1^{p,q}(\tilde{\pi}) = C^p( p^{-1} \mathcal{U} , \wedge^q \Lambda^* )$. Notice that $E_1^{p,q}(\pi)$ can be identified with the ${\rm GL}(n,\mathbb{Z})$-invariant subspace of $E_1^{p,q}(\tilde{\pi})$. Note however that by the $E_2$-stage a similar identification need not hold. It is for this reason that we need to go back to the earlier stages of the spectral sequence.\\

Take an element $a \in E_2^{p,q}(\pi)$. We need to determine $d_2(a)$. Represent $a$ by an element $w \in K^{p,q}(\pi)$, which we can think of as an invariant element of $K^{p,q}(\tilde{\pi})$. An element $z \in K^{p+2,q-1}(\pi)$ is a representative for $d_2(a)$ if there exists elements $x \in K^{p+1,q}(\pi)$, $y \in K^{p+1,q-1}(\pi)$ related as follows:
\begin{equation*}\xymatrix{
0 & & \\
w \ar[u]^{(-1)^p\delta_S} \ar[r]^{\delta_C} & x & \\
& y \ar[r]^{\delta_C} \ar[u]^{(-1)^{p-1}\delta_S} & z
}
\end{equation*}
Modulo the image of $\delta_S$ every representative of $d_2(a)$ has this form for some $w,x,y,z$.\\

Consider first an element $a \in E_2^{0,1}(\tilde{\pi})$ which is not assumed to be ${\rm GL}(n,\mathbb{Z})$-invariant. Let $w_1$ be a representative for $a$ in $E_1^{0,1}(\tilde{\pi})$. We have that $E_1^{0,1}(\tilde{\pi}) = C^0( p^{-1}\mathcal{U} , \Lambda^*_\rho) = \prod_i {\rm Map}({\rm GL}(n,\mathbb{Z}) , \Lambda^*)$. Thus $w_1$ is given by a collection $\{ \alpha_i \}$ of maps $\alpha_i : {\rm GL}(n,\mathbb{Z}) \to \Lambda^*$. For reasons that will become clear later we restrict to the case where the $\alpha_i$ are constant maps $\alpha_i : {\rm GL}(n,\mathbb{Z}) \to \Lambda^*$. Since $w_1$ represents the element $a \in E_2^{0,1}(\tilde{\pi})$ we have $d_1(w_1) = 0$. This can only be possible if for all non-empty $U_{ij}$ we have $\alpha_i = \alpha_j$. Thus there exists $\alpha \in \Lambda^*$ such that $\alpha = \alpha_i$ for all $i$.\\

We can represent $w_1$ by $w \in K^{0,1}(\tilde{\pi})$ as follows: for each trivialization we have a projection map $pr_i : U_i \times {\rm GL}(n,\mathbb{Z}) \times T^n \to T^n$. Since $\alpha_i$ is a constant it can be viewed as an element of $\Lambda^* = H^1(T^n,\mathbb{Z})$. Choose a lift $\tilde{\alpha} \in S^1(T^n,\mathbb{Z})$ of $\alpha$, where $S^*(T^n,\mathbb{Z})$ is the singular cochain complex. We then take $w \in K^{0,1}(\tilde{\pi}) = \prod_i S^1 ( U_i \times {\rm GL}(n,\mathbb{Z}) \times T^n , \mathbb{Z})$ to be $w = \{ \beta_i \}$ where $\beta_i = pr_i^*( \tilde{\alpha} )$. It is clear that $w$ is a representative for $w_1$ in $K^{0,1}(\tilde{\pi})$.\\

Now we attempt to determine $d_2(a)$ starting from the representative $w$. Set $x = \delta_C w$. Thus $x = \{ r_{ij} \}$ is the element of $\prod_{i < j} S^1 ( U_{i j} \times {\rm GL}(n,\mathbb{Z}) \times T^n , \mathbb{Z})$ given by
\begin{equation}\label{rij}
r_{ij} = h^*_{ij} (\beta_j|_{U_{ij} \times {\rm GL}(n,\mathbb{Z}) \times T^n} ) - \beta_i|_{U_{ij} \times {\rm GL}(n,\mathbb{Z}) \times T^n}
\end{equation}
We are identifying $q^{-1}(U_{ij})$ with $U_{ij} \times {\rm GL}(n,\mathbb{Z}) \times T^n $ through the inclusion $U_{ij} \to U_i$ and $h_{ij} : U_{ij} \times {\rm GL}(n,\mathbb{Z}) \times T^n \to U_{ij} \times {\rm GL}(n,\mathbb{Z}) \times T^n$ are the transition functions for $\tilde{E} \to M$. Recall that these have the form
\begin{equation*}
h_{ij}(u,g,v) = (u , g_{ij}(u)g , t_{ij}(u,g)v ).
\end{equation*}
In addition if we let $pr_{ij}$ be the projection $U_{ij} \times {\rm GL}(n,\mathbb{Z}) \times T^n \to T^n$ then (\ref{rij}) can be rewritten as 
\begin{equation*}
r_{ij} = h^*_{ij} \beta_{ij} -  \beta_{ij}
\end{equation*}
where $\beta_{ij} = pr^*_{ij} \tilde{\alpha}$. 

Next we want to find functions $s_{ij} : U_{ij} \times {\rm GL}(n,\mathbb{Z}) \times T^n \to \mathbb{Z}$ such that $r_{ij} = h^*_{ij} (\beta_{ij} ) - \beta_{ij} = \delta_S (s_{ij})$. The function $t_{ij}$ acts on $T^n$ by translation: $t_{ij} v = v + t_{ij}$ for $v \in T^n$. For $t \in I = [0,1]$ let $t\tilde{t}_{ij}$ be the translation $t\tilde{t}_{ij} (v) = v + t \tilde{t}_{ij} \, ({\rm mod} \, \Lambda)$. This gives a homotopy between $t_{ij}$ and the identity and one sees easily that $h^*_{ij} (\beta_{ij} ) - \beta_{ij} = \delta_S (s_{ij})$ where $s_{ij}$ is defined by
\begin{equation*}
s_{ij}( u , g , v ) = \tilde{\alpha} ( \gamma_{ij}^{u,g,v}(t) )
\end{equation*}
and where $\gamma_{ij}^{u,g,v}(t)$ is the path $\gamma_{ij}^{u,g,v}(t) : I \to T^n$ given by $t \mapsto v + t \tilde{t}_{ij}(u,g)$.\\

We let $y = \{ s_{ij} \} \in K^{1,0}(\tilde{\pi})$ and set $z = \delta_C y = \{ u_{ijk} \}$. To write $u_{ijk}$ in local coordinates we use inclusions $U_{ijk} \to U_{ij} \to U_i$ so that
\begin{eqnarray*}
u_{ijk}(u,g,v) &=& s_{ij}(u,g,v) + s_{jk}(u, g_{ij}(u)g , (t_{ij}(u,g))v) \\
&& + s_{ki}(u, g_{ik}(u)g , (t_{ik}(u,g))v).
\end{eqnarray*}
It follows that
\begin{equation*}
u_{ijk}( u , g , v ) = \tilde{\alpha} ( \mu_{ijk}^{u,g,v}(t) )
\end{equation*}
where $\mu_{ijk}^{u,g,v}(t)$ is the path $\mu_{ijk}^{u,g,v}(t) : I \to U_{ijk} \times T^n$ given by $t \mapsto v + t c_{ijk}(u,g))$. Now since $c_{ijk}$ is integral this path closes and we can think of $\tilde{\alpha} (\mu_{ijk}^{u,g,v}(t))$ as the pairing of $\alpha \in \Lambda^* = H^1(T^n,\mathbb{Z})$ with the twisted Chern class $c = \{ c_{ijk} \} \in H^2(M , \Lambda_\rho )$.\\

Now let $f \in E_2^{p,q}(\pi)$. Let $f_1 \in K^{p,q}(\tilde{\pi})$ be an invariant representative for $f$. We can write $f_1$ as follows:
\begin{equation*}
f_1 = \sum_{|I| = q} f_I e^I.
\end{equation*}
This expression requires some explanation. The sum is over ordered subsets of $\{ 1 , 2 , \dots , n \}$ of size $q$. If $I = (i_1 , i_2 , \dots , i_q )$ then $e^I = e^{i_1} \wedge e^{i_2} \wedge \dots e^{i_q}$ where $\{ e^i \}$ is a basis for $\Lambda^*$. We view elements of $\Lambda^*$ as elements of $E_1^{0,1}(\tilde{\pi}) = C^0( p^{-1}\mathcal{U} , \Lambda^*_\rho) = \prod_i {\rm Map}({\rm GL}(n,\mathbb{Z}) , \Lambda^*)$ corresponding to constant maps ${\rm GL}(n,\mathbb{Z}) \to \Lambda^*$. The $f_I$ are elements of $E_1^{p,0}(\tilde{\pi}) = C^p( p^{-1}\mathcal{U} , \mathbb{Z})$.\\

For $g \in {\rm GL}(n,\mathbb{Z})$ we may write $g e^i = g^i_j e^j$ for some integer coefficients $g^i_j$ defining $g$. The action of $g$ on $\wedge^q \Lambda^*$ can similarly be written $g e^I = g^I_J e^J$ for some coefficients $g^I_J$ depending on the $g^i_j$. Let $R_g$ denote the right action of $g \in {\rm GL}(n,\mathbb{Z})$ on $\tilde{M}$ and $\tilde{R}_g$ the right action of $g$ on $\tilde{E}$. Then by construction we have
\begin{equation*}
\tilde{R}_g^* e^I = g^I_J e^I
\end{equation*}
and invariance of $f_1$ under ${\rm GL}(n,\mathbb{Z})$ translates to
\begin{equation}\label{tran}
(R_g^* f_I) g^I_J = f_J.
\end{equation}

We can represent $f_1$ in $K^{p,q}(\tilde{\pi})$ as
\begin{equation*}
f_1 = \sum_{|I| = q} f_I \tilde{e}^I
\end{equation*}
where for $I = (i_i , i_2 , \dots , i_q)$ we have $\tilde{e}^I = \tilde{e}^{i_1} \wedge \tilde{e}^{i_2} \wedge \dots \wedge \tilde{e}^{i_q}$ and $\tilde{e}^i$ is a lift of $e^i$ corresponding to a lift of $\Lambda^* = H^1(T^n,\mathbb{Z})$ to singular cochains $S^1(T^n,\mathbb{Z})$.\\

Using the fact that $d_1(f_1) = 0$, where $d_1$ is the differential $d_1 : K^{p,q}(\tilde{\pi}) \to K^{p+1,q}(\tilde{\pi})$ one finds that $\delta_C f_I = 0$. For the $\tilde{e}^i$ we have elements $x^i,y^i,z^i$ related by
\begin{equation*}\xymatrix{
0 & & \\
\tilde{e}^i \ar[u]^{-\delta_S} \ar[r]^{\delta_C} & x^i & \\
& y^i \ar[r]^{\delta_C} \ar[u]^{\delta_S} & z^i
}
\end{equation*}
From our earlier results it follows that
\begin{eqnarray*}
\tilde{R}^*_g \tilde{e}^i &=& g^i_j \tilde{e}^j \\
\tilde{R}^*_g x^i &=& g^i_j x^j \\
\tilde{R}^*_g y^i &=& g^i_j y^j \\
\tilde{R}^*_g z^i &=& g^i_j z^j
\end{eqnarray*}
and $z^a = \{ s^a_{ijk} \}$ where $s^a_{ijk}(u,g,v) = e^a( \mu_{ijk}^{u,g,v})$ with $\mu_{ijk}^{u,g,v}$ a path joining $v$ to $v + c_{ijk}(u,g)$. For $I = (i_1 , i_2 , \dots , i_q)$ define $x^I$ by
\begin{equation*}
x^I = \sum_{j = 1}^q (-1)^{j-1} \tilde{e}^{i_1} \wedge \tilde{e}^{i_2} \wedge \dots \wedge x^{i_j} \wedge \dots \wedge \tilde{e}^{i_q}
\end{equation*}
and similarly define $y^I,z^I$. Then $\tilde{e}^I,x^I,y^I,z^I$ are related as follows:
\begin{equation*}\xymatrix{
0 & & \\
\tilde{e}^I \ar[u]^{-\delta_S} \ar[r]^{\delta_C} & x^I & \\
& y^I \ar[r]^{\delta_C} \ar[u]^{\delta_S} & z^I
}
\end{equation*}
and it follows that we have corresponding relations
\begin{equation*}\xymatrix{
0 & & \\
f_I \tilde{e}^I \ar[u]^{(-1)^p\delta_S} \ar[r]^{\delta_C} & f_I x^I & \\
& f_I y^I \ar[r]^{\delta_C} \ar[u]^{(-1)^{p-1}\delta_S} & f_I z^I
}
\end{equation*}
By construction the terms in this diagram are invariant so $d_2(f)$ is represented in $E^{p+2,q-1}(\tilde{\pi})$ by $f_I z^I$ which can be identified with $c \smallsmile f$ where $c$ acts by a combination of cup product and contraction.
\end{proof}
\end{prop}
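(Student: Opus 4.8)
The plan is to build the Leray--Serre spectral sequence from the mixed \v{C}ech--singular double complex of Bott and Tu \cite{botttu}, so that the differential $d_2$ can be computed directly at the cochain level. For a good cover $\mathcal{U} = \{U_i\}$ of $M$ one forms the double complex $K^{p,q} = C^p(\pi^{-1}\mathcal{U}, S^q)$ with the \v{C}ech differential $\delta_C$ and the singular differential $\delta_S$; its total cohomology is $H^*(E,\mathbb{Z})$ and the associated spectral sequence is the one in question. A direct computation is unavoidable because taking invariants fails to commute with the spectral sequence past $E_1$: pulling $E$ back along the monodromy cover $p : \tilde{M} \to M$ yields a principal $T^n$-bundle $\tilde{\pi} : \tilde{E} \to \tilde{M}$ carrying a ${\rm GL}(n,\mathbb{Z})$-action, and while $K^{p,q}(\pi)$ and $E_1^{p,q}(\pi)$ are the ${\rm GL}(n,\mathbb{Z})$-invariant subspaces of $K^{p,q}(\tilde{\pi})$ and $E_1^{p,q}(\tilde{\pi})$, this identification already breaks at $E_2$. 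One therefore cannot simply quote the classical principal-bundle formula and average; instead one must track explicit zig-zag representatives that are manifestly invariant.

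First I would settle the basic case of a class $a \in E_2^{0,1}$ represented by a constant $\alpha \in \Lambda^* = H^1(T^n,\mathbb{Z})$. Choosing a singular lift $\tilde{\alpha} \in S^1(T^n,\mathbb{Z})$ of $\alpha$ and pulling back along the fibre projections gives a cochain $w = \{pr_i^*\tilde{\alpha}\}$ representing $a$. Then I would run the zig-zag: set $x = \delta_C w$, whose components are $r_{ij} = h_{ij}^*\beta_{ij} - \beta_{ij}$, where the transition map $h_{ij}$ acts on the fibre by the translation $t_{ij}$. Since translation by $t_{ij}$ is homotopic to the identity through the straight-line path $v \mapsto v + t\,\tilde{t}_{ij}$, one obtains an explicit primitive $s_{ij}$ with $\delta_S s_{ij} = r_{ij}$, namely $s_{ij}(u,g,v) = \tilde{\alpha}$ evaluated on that path. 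Finally $z = \delta_C\{s_{ij}\}$ has components $u_{ijk}(u,g,v) = \tilde{\alpha}$ evaluated on the loop $t \mapsto v + t\,c_{ijk}(u,g)$; this loop closes precisely because $c_{ijk}$ is $\Lambda$-valued, and the outcome is the pairing of $\alpha$ with the twisted Chern cocycle $\{c_{ijk}\} \in H^2(M,\Lambda_\rho)$, that is, the contraction of the twisted Chern class against $\alpha$.

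To reach general $(p,q)$ I would exploit the derivation structure of the spectral sequence together with equivariance. An invariant representative of $f \in E_2^{p,q}(\pi)$ can be written $f_1 = \sum_{|I|=q} f_I\, \tilde{e}^I$ with $f_I \in C^p(p^{-1}\mathcal{U},\mathbb{Z})$ and $\tilde{e}^I$ wedge products of the lifts $\tilde{e}^i$ from the previous step; the condition $d_1 f_1 = 0$ forces $\delta_C f_I = 0$, and ${\rm GL}(n,\mathbb{Z})$-invariance is encoded in $(R_g^* f_I)\,g^I_J = f_J$. Each generator $\tilde{e}^i$ carries its own zig-zag data $x^i, y^i, z^i$ from the basic case, transforming equivariantly as $\tilde{R}_g^* z^i = g^i_j z^j$ and so on, and the Leibniz-type combinations $x^I, y^I, z^I$ (signed sums over the wedge factors) assemble into a zig-zag for $\tilde{e}^I$. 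Multiplying by $f_I$ and summing then produces an invariant zig-zag realising $d_2(f)$, whose terminal term $f_I z^I$ descends to $E_2^{p+2,q-1}(\pi)$ and, by the basic case, equals $c \smallsmile f$ with $c$ acting by cup product followed by contraction.

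The main obstacle is precisely the interplay between invariance and the zig-zag: I must choose the lifts $\tilde{t}_{ij}$ equivariantly, so that $\tilde{t}_{ij}(u,gh) = h^{-1}\tilde{t}_{ij}(u,g)$, in order that the homotopy path, and hence $s_{ij}$ and $u_{ijk}$, transform correctly under ${\rm GL}(n,\mathbb{Z})$; only then are the combinations $f_I z^I$ invariant and able to descend to the spectral sequence of $\pi$ itself. Verifying this equivariance and keeping track of the signs in the graded-Leibniz assembly of $x^I, y^I, z^I$ is the delicate part, whereas identifying the closed-loop evaluation with the cup-product-plus-contraction against the twisted Chern class is then a matter of unwinding definitions.
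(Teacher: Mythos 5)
Your proposal is correct and follows essentially the same route as the paper's own proof: the Bott--Tu \v{C}ech--singular double complex, the explicit zig-zag for a constant class in $E_2^{0,1}$ via the straight-line fibre homotopy, and the equivariant Leibniz-style assembly $f_I z^I$ for general $(p,q)$, with invariance tracked at the cochain level precisely because taking ${\rm GL}(n,\mathbb{Z})$-invariants fails past $E_1$. No gaps to flag.
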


A slight generalization of this result is to consider the Leray-Serre spectral sequence for an affine torus bundle $E \to M$ twisted by local coefficients. If $L$ is a system of local coefficients on $M$ with coeficient group a free $\mathbb{Z}$-module then the $E_2$-term for the spectral sequence has form $E_2^{p,q} = H^p(M , L \otimes \wedge^q \Lambda^*_\rho)$. The differential $d_2 : E_2^{p,q} \to E_2^{p+2,q-1}$ is again given by the product with the twisted Chern class followed by a contraction. The above proof carries over to this case with one small modification. The elements $f_I$ instead of being valued in $C^p(p^{-1}\mathcal{U} , \mathbb{Z})$ are taken to have values in $C^p(p^{-1}\mathcal{U} , L)$ and the transformation law (\ref{tran}) is modified accordingly.\\

{\bf Gysin sequence with local coefficients:} consider the special case where $n=1$, that $E \to M$ is a circle bundle. In this case the spectral sequence only has two rows so by a standard argument we obtain a long exact sequence:
\begin{equation*}\xymatrix{
\dots \ar[r] & H^i(M,\mathbb{Z}) \ar[r]^-{\pi^*} & H^i(E,\mathbb{Z}) \ar[r]^-{\pi_*} & H^{i-1}(M , \Lambda^*_\rho) \ar[r]^-{c \smallsmile } & H^{i+1}(M, \mathbb{Z}) \ar[r] & \dots
}
\end{equation*}
This is the Gysin sequence for non-oriented circle bundles. There is a similar sequence for sphere bundles \cite{las} but we only need the circle bundle case.


\section{Topological T-duality of circle bundles}

We are now ready to describe a version of topological T-duality which applies to pairs $(E,H)$ where $\pi : E \to M$ is a circle bundle over $M$ and $H \in H^3(E,\mathbb{Z})$ is an integral degree $3$ cohomology class on $E$. There is a straightforward notion of isomorphism of such pairs namely $(E,H)$ and $(E',H')$ are said to be isomorphic if there is a bundle isomorphism $\phi : E \to E'$ covering the identity such that $H = \phi^*(H')$.

\begin{defn}
Two pairs $(E,H)$, $(\hat{E},\hat{H})$ consisting of circle bundles $\pi : E \to M$, $\hat{\pi} : \hat{E} \to M$ and fluxes $H \in H^3(F,\mathbb{Z})$, $\hat{H} \in H^3(\hat{F},\mathbb{Z})$ are said to be {\em T-dual} if
\begin{itemize}
\item{The first Stiefel-Whitney classes agree: $\xi = w_1(E) = w_1(\hat{E})$}
\item{The twisted Chern classes and fluxes are related by $\pi_*(H) = \hat{F} = c_1(\hat{E})$, $\hat{\pi}_*(\hat{H}) = F = c_1(E)$}
\item{Let $p,\hat{p}$ be the projections $p : E \times_M \hat{E} \to E$ and $\hat{p} : E \times_M \hat{E} \to \hat{E}$. Then $p^*(H) = \hat{p}^*(\hat{H})$.}
\end{itemize}
\end{defn}

Denote by $F$ the fibre product $F = E \times_M \hat{E}$. We call $F$ the {\em correspondence space}. It is helpful to visualize the relation between the spaces as follows
\begin{equation*}\xymatrix{
& F \ar[dl]_p \ar[dr]^{\hat{p}} \ar[dd]^q & \\
E \ar[dr]_\pi & & \hat{E} \ar[dl]^{\hat{\pi}} \\
& M &
}
\end{equation*}
Where we denote by $q$ the projection $q = p \pi = \hat{p} \hat{\pi} : F \to M$. In this diagram there are $5$ different torus bundles corresponding to $\pi,\hat{\pi},p,\hat{p}$ and $q$. We will need to refer to the associated Leray-Serre spectral sequences. To avoid confusion we label the different spectral sequences by the associated map. Thus for example the spectral sequence associated to $\pi : E \to M$ will be denoted $E_r^{p,q}(\pi)$ and similarly for the other maps.

\begin{prop}\label{tdualexist}
For any circle bundle $\pi : E \to M$ with flux $H \in H^3(E,\mathbb{Z})$ there exists a T-dual $(\hat{E},\hat{H})$ which is unique up to isomorphism. Furthermore the T-duality relation is symmetric so the T-dual of $(\hat{E},\hat{H})$ is $(E,H)$.
\begin{proof}
Let $\xi = w_1(E)$ and $F = c_1(E)$. Define $\hat{F} = \pi_*(H) \in H^2(M,\mathbb{Z}_\xi)$. Then $\hat{F}$ is the twisted Chern class for a circle bundle $\hat{p} : \hat{E} \to M$ unique up to isomorphism and by construction we have $w_1(\hat{E}) = \xi = w_1(E)$. From the Gysin sequence for $E$ we have $\hat{F} \smallsmile F = 0$. Feeding this into the Gysin sequence for $\hat{E}$ we find that there exists $H' \in H^3(\hat{E},\mathbb{Z})$ such that $\hat{\pi}_*(H') = F$. We would like to say that $(\hat{E},H')$ is the T-dual of $(E,H)$ but there are two problems. First $H'$ is not uniquely determined by the condition $\hat{\pi}_*(H') = F$, second  we need not have $p^*(H) = \hat{p}^*(H')$. 

\begin{lemp}\label{exist}
There exists $a \in H^3(M,\mathbb{Z})$ such that $p^*(H) - \hat{p}^*(H') = q^*(a)$.
\end{lemp}

We will prove Lemma \ref{exist} below. Given this result we define $\hat{H} = H' + a$. Then we have that $(E,H)$ and $(\hat{E},\hat{H})$ are T-dual. For uniqueness suppose $(\hat{E},\hat{H} + \mu)$ is also T-dual to $(E,H)$. In this case we must have $\hat{\pi}_* \mu = 0$ and $\hat{p}^* \mu = 0$.

\begin{lemp}\label{unique}\label{lem2}
Let $\mu \in H^3(\hat{E},\mathbb{Z})$ be such that $\hat{\pi}_* \mu = 0$ and $\hat{p}^* \mu = 0$. Then there exists $\alpha \in H^1(M,\mathbb{Z}_\xi)$ such that $\mu = \hat{\pi}^*( \alpha \smallsmile F)$.
\end{lemp}

From this Lemma it follows that $\hat{H} + \mu = \hat{H} + \hat{\pi}^*( \alpha \smallsmile F)$. Uniqueness up to bundle isomorphism will follow from:
\begin{lemp}\label{gauge}\label{buniso}
Let $\pi : E \to M$ be a circle bundle with fibre orientation class $\xi \in H^1(M,\mathbb{Z}_2)$. For any $\alpha \in H^1(M,\mathbb{Z}_\xi)$ there exists a bundle isomorphism $\phi : E \to E$ such that for any $x \in H^k(E,\mathbb{Z})$ we have
\begin{equation*}
\phi^* x = x + \pi^*( \alpha \smallsmile \pi_* x ).
\end{equation*}
\end{lemp}
Thus we can find a bundle isomorphism $\phi : \hat{E} \to \hat{E}$ corresponding to $\alpha \in H^1(M,\mathbb{Z}_2)$ such that
\begin{equation*}
\phi^* \hat{H} = \hat{H} + \hat{\pi}^*( \alpha \smallsmile \hat{\pi}_* \hat{H}) = \hat{H} + \hat{\pi}^*( \alpha \smallsmile F).
\end{equation*}
This establishes existence and uniqueness.
\end{proof}
\end{prop}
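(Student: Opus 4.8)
The plan is to build the dual bundle from the data that the T-duality conditions force, construct a first candidate flux by homological algebra, correct it by a class pulled back from the base, and then absorb the remaining ambiguity into a bundle automorphism. Concretely, I would set $\xi = w_1(E)$ and $F = c_1(E)$, and define $\hat F = \pi_*(H) \in H^2(M,\mathbb{Z}_\xi)$. By the classification of circle bundles (Proposition \ref{glact}) there is a circle bundle $\hat\pi : \hat E \to M$, unique up to isomorphism, with $w_1(\hat E) = \xi$ and $c_1(\hat E) = \hat F$; this already secures the first T-duality condition and the equation $\hat F = \pi_* H$. To produce a flux I would use the Gysin sequence for $E$, whose exactness at $H^2(M,\mathbb{Z}_\xi)$ gives $F \smallsmile \hat F = F \smallsmile \pi_* H = 0$, hence $\hat F \smallsmile F = 0$; feeding this into the Gysin sequence for $\hat E$ shows $F$ lies in the image of $\hat\pi_*$, so there is $H' \in H^3(\hat E,\mathbb{Z})$ with $\hat\pi_* H' = F$. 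The pair $(\hat E, H')$ satisfies every condition except possibly $p^* H = \hat p^* H'$, which I repair using Lemma \ref{exist}: writing $p^*H - \hat p^* H' = q^* a$ and setting $\hat H = H' + \hat\pi^* a$, the identity $\hat p^* \hat\pi^* = q^*$ gives $\hat p^* \hat H = p^* H$, while $\hat\pi_* \hat\pi^* a = 0$ keeps $\hat\pi_* \hat H = F$. This establishes existence.

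The heart of the argument is Lemma \ref{exist}, which I would prove with the Leray--Serre spectral sequence of the $T^2$-bundle $q : F \to M$; here the monodromy is $\xi$ on each factor, so $\Lambda^*_\rho = \mathbb{Z}_\xi\langle e_1\rangle \oplus \mathbb{Z}_\xi\langle e_2\rangle$ and $\wedge^2\Lambda^*_\rho \cong \mathbb{Z}$ is trivial (the $T^2$-fibre is orientable). Setting $\eta = p^* H - \hat p^* H'$, it suffices to show $\eta$ lies in the bottom filtration piece $F^{3,3} = \mathrm{im}(q^*)$, i.e. that its images in the two higher graded pieces $E_\infty^{1,2}(q)$ and $E_\infty^{2,1}(q)$ vanish. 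For the top piece, the projection formula gives $p_* p^* = 0$ and $\hat p_* \hat p^* = 0$, hence $q_* \eta = 0$, killing the $E_\infty^{1,2}$ component. For the middle piece I would use naturality of the spectral sequence under the fibration morphisms $p$ and $\hat p$: the $E_2^{2,1}$-representative of $\eta$ is $\hat F\,e_1 - F\,e_2$, coming respectively from $\pi_* H = \hat F$ in the $e_1$ slot and $\hat\pi_* H' = F$ in the $e_2$ slot. The crucial point is that this class is nonzero in $E_2^{2,1}$ but is killed by the incoming differential: since $c_1(F) = (F,\hat F)$, Proposition \ref{ssdif} computes $d_2(e_1\wedge e_2) = F\,e_2 - \hat F\,e_1$, so $\hat F\,e_1 - F\,e_2$ is exact and dies in $E_3$. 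I expect this cancellation, matching the defining equations $\hat F = \pi_* H$ and $F = \hat\pi_* H'$ against the $d_2$ differential, to be the main obstacle and the real content of the lemma.

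For uniqueness I would note that since $\hat F = \pi_* H$ fixes $\hat E$ up to isomorphism, any second T-dual on $\hat E$ differs from $(\hat E,\hat H)$ by a class $\mu \in H^3(\hat E,\mathbb{Z})$ with $\hat\pi_*\mu = 0$ and $\hat p^*\mu = 0$, and I must show (Lemma \ref{unique}) that $\mu = \hat\pi^*(\alpha\smallsmile F)$ for some $\alpha \in H^1(M,\mathbb{Z}_\xi)$. Exactness of the Gysin sequence for $\hat E$ turns $\hat\pi_*\mu = 0$ into $\mu = \hat\pi^*\beta$, and then $\hat p^*\mu = q^*\beta = 0$ places $\beta$ in $\ker q^*$. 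Identifying $\ker q^*$ with the image of the incoming differentials to $E^{3,0}(q)$ --- where $d_2 : E_2^{1,1} \to E_2^{3,0}$ contributes exactly $\mathrm{im}(F\smallsmile) + \mathrm{im}(\hat F\smallsmile)$ by Proposition \ref{ssdif} --- I can write $\beta = F\smallsmile\alpha + \hat F\smallsmile\alpha'$ modulo terms I still need to check are harmless; since $\hat\pi^*$ annihilates $\mathrm{im}(\hat F\smallsmile) = \ker\hat\pi^*$, this yields $\mu = \hat\pi^*(F\smallsmile\alpha)$. Lemma \ref{gauge} then provides a bundle automorphism $\phi$ of $\hat E$ with $\phi^*\hat H = \hat H + \hat\pi^*(\alpha\smallsmile F) = \hat H + \mu$, so the two duals are isomorphic.

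Finally, symmetry is immediate: the three defining conditions are interchanged under the involution $(E,H)\leftrightarrow(\hat E,\hat H)$ together with $p\leftrightarrow\hat p$ and $\pi\leftrightarrow\hat\pi$, so $(E,H)$ is a T-dual of $(\hat E,\hat H)$, and by the uniqueness just proved it is the T-dual. A point I would watch carefully is the possible contribution of the $d_3$ differential to $\ker q^*$ in the uniqueness step, which must be shown not to enlarge $\beta$ beyond the image of $F\smallsmile$ modulo $\ker\hat\pi^*$; aside from this bookkeeping the proof is a direct assembly of Lemmas \ref{exist}, \ref{unique} and \ref{gauge}.
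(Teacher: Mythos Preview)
Your proposal is correct and follows essentially the same route as the paper: same construction of $\hat E$ and $H'$ via the two Gysin sequences, same reduction to the three lemmas, and the same Leray--Serre argument for Lemma~\ref{exist} (your computation that the $E_2^{2,1}$-representative $\hat F\,e_1 - F\,e_2$ is a $d_2$-boundary from $E_2^{0,2}(q)$ is exactly the paper's).

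The one point of difference is the handling of the $d_3$ contribution in Lemma~\ref{unique}, which you rightly flag as the delicate step. Rather than computing $\ker q^*$ directly inside $H^3(M)$, the paper disposes of $d_3$ by passing along the morphism of spectral sequences $E_r(q)\to E_r(\hat p)$ induced by the square $(F\xrightarrow{\hat p}\hat E)\to(F\xrightarrow{q}M)$. Since $\hat p$ is a circle bundle its spectral sequence has only two rows, so $d_3^{(\hat p)}=0$; under the morphism $\nu\mapsto\hat\pi^*\nu=\mu$, the $d_3$-part of $\nu$ is therefore sent to zero and one concludes directly that $\mu$ lies in the image of $d_2^{(\hat p)}$, i.e.\ $\mu=\gamma\smallsmile\hat\pi^*F$. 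This morphism trick is precisely the missing bookkeeping you anticipated; your alternative route via $\ker q^*\subset H^3(M)$ reaches the same place but would still need the observation that $\hat\pi^*$ kills the $\hat F$-component (which you note) together with the vanishing of the $d_3$-contribution after applying $\hat\pi^*$.
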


\begin{proof}[Proof of Lemma \ref{exist}]
Recall that associated to the fibration $F \to M$ is a filtration on the cohomology of $F$. In the case at hand we have
\begin{equation*}
p^* \pi^* (H^3(M,\mathbb{Z})) = F^{3,3} \subseteq F^{2,3} \subseteq F^{1,3} = H^3(F,\mathbb{Z})
\end{equation*}
and $F^{p,3}/F^{p+1,3}$ is isomorphic to $E_\infty^{p,3-p}(q)$ in the Leray-Serre spectral sequence. We must show that the class $d = p^*(H) - \hat{p}^*( \hat{H}) \in H^3(F,\mathbb{Z})$ lies in the subspace $F^{3,3}$. Note that we have $q_* d = 0$, which is precisely the condition that $d$ lies in the subspace $F^{2,3}$. Let $d'$ be the image of $d$ in $F^{2,3}/F^{3,3} = E_\infty^{2,1}(q)$. It remains to show that $d' = 0$. Associated to the morphism of bundles
\begin{equation*}\xymatrix{
F \ar[r]^p \ar[d]^q & E \ar[d]^\pi \\
M \ar@{=}[r] & M
}
\end{equation*}
is a morphism of spectral sequences $E_2^{p,q}(\pi) \to E_r^{p,q}(q)$. The image of $H$ in $E_2^{2,1}(\pi) = H^2(M,\mathbb{Z}_\xi)$ is given by $\pi_* H = \hat{F}$. A similar statement holds for $\hat{H}$. Therefore we find that $d' \in E_\infty^{2,1}(q)$ is represented by $d'' = (\hat{F},-F) \in E_2^{2,1}(q) = H^2(M,\mathbb{Z}_\xi) \oplus H^2(M,\mathbb{Z}_\xi)$. Clearly $d''$ lies in the image of $d_2 : H^0(M,\mathbb{Z}) = E_2^{0,2}(q) \to E_2^{2,1}(q)$. Therefore $d' = 0$ and the result follows.
\end{proof}

\begin{proof}[Proof of Lemma \ref{unique}]
From the Gysin sequence for $\hat{E} \to M$ we have that $\mu = \hat{\pi}^* \nu$ for some $\nu \in H^3(M,\mathbb{Z})$. Then $\nu \in H^3(M,\mathbb{Z}) = E_2^{3,0}(\hat{\pi})$ represents $\mu$ in the Leray-Serre spectral sequence for $\hat{E} \to M$. Now consider the morphism of fibre bundles
\begin{equation*}\xymatrix{
F \ar[r]^{\hat{p}} \ar[d]^q & \hat{E} \ar[d]^{\hat{\pi}} \\
M \ar@{=}[r] & M
}
\end{equation*}
This induces a morphism of spectral sequences $E^{p,q}_r(\hat{\pi}) \to E_r^{p,q}(q)$. The element $\nu \in E_2^{3,0}(\hat{\pi})$ representing $\mu$ gets mapped to $\nu \in E_2^{3,0}(q) = H^3(M,\mathbb{Z})$ representing $\hat{p}^* \mu$. Now the condition $\hat{p}^* \mu = 0$ tells us that $\nu$ lies in the image of the differentials $d_2,d_3$. Consider now a second morphism of fibre bundles
\begin{equation*}\xymatrix{
F \ar@{=}[r] \ar[d]^{\hat{p}} & F \ar[d]^q \\
\hat{E} \ar[r]^{\hat{\pi}} & M
}
\end{equation*}
inducing a morphism $E_r^{p,q}(q) \to E_r^{p,q}(\hat{p})$. Under this morphism $\nu \in E_2^{3,0}(q)$ representing $\hat{p}^* \mu = 0$ gets sent to $\hat{\pi}^* \nu = \mu \in E_2^{3,0}(\hat{p})$. Under the morphism $E_r^{p,q}(q) \to E_r^{p,q}(\hat{p})$ the differential $d_3$ gets sent to zero and it follows that $\mu = d_2 (\hat{\pi}^* \alpha)$ for some $\alpha \in E_2^{1,1}(q) = H^1(M,\mathbb{Z}_\xi)$. But this is exactly the condition $\mu = \hat{\pi}^*\alpha \smallsmile \hat{\pi}^* F = \hat{\pi}^*( \alpha \smallsmile F)$.
\end{proof}

\begin{proof}[Proof of Lemma \ref{gauge}]
We follow the proof of \cite[Theorem 2.16]{bunksch} and adapt it to general circle bundles. The class $\xi \in H^1(M,\mathbb{Z}_2)$ determines a double cover $M_2 \to M$. Let $\pi_2 : E_2 \to M_2$ denote the pull-back of $E$ over $M_2$. Then as we have seen $E_2$ admits the structure of a principal circle bundle and there is an involution $\sigma : E_2 \to E_2$ which covers the involution on $M_2$ and makes $E_2$ into a principal ${\rm O}(2)$-bundle over $M$.\\

Consider now the following commutative diagram:
\begin{equation*}\xymatrix{
E_2 \times {\rm U}(1) \ar[r]^-m \ar[d]^{pr} & E_2 \ar[d]^{\pi_2} \\
E_2 \ar[r]^{\pi_2} & M_2
}
\end{equation*}
where $pr$ is the projection to the first factor and $m$ is the principal bundle right multiplication. The involution $\sigma$ acts on all spaces in this diagram, where we let $\sigma$ act on $E_2 \times {\rm U}(1)$ by $\sigma( e , g ) = (\sigma e , g^{-1})$ for $e \in E_2$, $g \in {\rm U}(1)$. Let $\tilde{E} = \left( E_2 \times {\rm U}(1) \right)/\sigma$. The diagram then descends to the following:
\begin{equation*}\xymatrix{
\tilde{E} \ar[r]^-m \ar[d]^{pr} & E \ar[d]^{\pi} \\
E \ar[r]^{\pi} & M
}
\end{equation*}
Note that since $E_2 \times {\rm U}(1)$ is a trivial principal bundle we have that $pr : \tilde{E} \to E$ has trivial twisted Chern class. Indeed $\tilde{E}$ admits a section $s : E \to \tilde{E}$ given by $s(e) = (e',1)$ where $e' \in E_2$ is any lift of $e \in E$ and $1 \in {\rm U}(1)$ is the identity. It follows that the Gysin sequence for $\tilde{E} \to E$ splits giving
\begin{equation}\label{isom}
H^k(\tilde{E},\mathbb{Z}) = H^k(E,\mathbb{Z}) \oplus H^{k-1}(E,\mathbb{Z}_\xi).
\end{equation}
The isomorphism (\ref{isom}) is given by sending $a \in H^k(\tilde{E},\mathbb{Z})$ to $(s^*a , pr_* a)$.

Now for $x \in H^k(E,\mathbb{Z})$ we would like to determine $m^* x \in H^k(\tilde{E},\mathbb{Z})$. We use the diagram (which is commutative except for $s^*$)
\begin{equation*}\xymatrix{
H^k(M,\mathbb{Z}) \ar[r]^{\pi^*} \ar[d]^{\pi^*} & H^k(E,\mathbb{Z}) \ar[r]^-{\pi_*} \ar[d]^{m^*} & H^{k-1}(M,\mathbb{Z}_\xi) \ar[d]^{\pi^*} \\
H^k(E,\mathbb{Z}) \ar@<1ex>[r]^{pr^*} & H^k(\tilde{E},\mathbb{Z}) \ar@<1ex>[l]^{s^*} \ar[r]^{pr_*} & H^{k-1}(E,\mathbb{Z}_\xi) 
}
\end{equation*}
to see that $pr_* m^* x = \pi^* \pi_* x$. One easily checks that $m s = {\rm id} : E \to E$ so that $s^* m^* x = x$. Thus $m^*x$ corresponds to $(x,\pi^* \pi_* x)$ under (\ref{isom}).\\

Now consider an equivariant gauge transform, that is a map $g : M_2 \to {\rm U}(1)$ such that $g(\sigma p) = g(p)^{-1}$, for $p \in M_2$. The map $g$ determines a principal bundle automorphism of $E_2$ which can be described as the composition
\begin{equation*}\xymatrix{
E_2 \ar[r]^-{({\rm id},\pi_2)} & E_2 \times M_2 \ar[r]^-{({\rm id},g)} & E_2 \times {\rm U}(1) \ar[r]^-m & E_2
}
\end{equation*}
and this descends to a bundle automorphism $\phi : E \to E$ given by
\begin{equation*}\xymatrix{
E \ar[r]^-{({\rm id},\pi)} & \left( E_2 \times M_2 \right)/\sigma \ar[r] & \tilde{E} \ar[r]^-m & E
}
\end{equation*}
For a class $x \in H^k(E,\mathbb{Z})$ we would like to determine $\phi^* x$. To do this we work out what happens for each of the three steps defining $\phi$. We have already described $m^*x$ so the next step is to understand pull-back under $\left( E_2 \times M_2 \right)/\sigma \to \tilde{E}$. For this we work with $\mathbb{Z}_2$-equivariant cohomology in the sense of Eilenberg \cite{eil} and the corresponding map $({\rm id} , g ) : E_2 \times M_2 \to E_2 \times {\rm U}(1)$. Now $g$ defines an element of $H_\sigma^0(M_2 , \mathcal{C}^\infty( {\rm U}(1) ) )$, where the subscript denotes $\sigma$-equivariance. But this is isomorphic to $H^1(M,\mathbb{Z}_\xi)$. For any $\alpha \in H^1(M,\mathbb{Z}_\xi)$ choose a corresponding equivariant map $g : M_2 \to {\rm U}(1)$ representing the class $\alpha$. Let $\mathbb{Z}_\sigma$ denote the $\mathbb{Z}_2 = \langle \sigma \rangle$-module which as an abelian group is $\mathbb{Z}$ but where $\sigma$ acts by $-1$. Then there is a canonical class $[ dt ] \in H^1_\sigma ( {\rm U}(1) , \mathbb{Z}_\sigma)$. A pair $(a,b) \in H^k(E,\mathbb{Z}) \oplus H^{k-1}(E,\mathbb{Z}_\xi)$ representing an element of $H^k(\tilde{E},\mathbb{Z})$ corresponds under $H^k(\tilde{E},\mathbb{Z}) \simeq H^k_\sigma(E_2 \times {\rm U}(1),\mathbb{Z})$ to
\begin{equation*}
a + [dt] \smallsmile b
\end{equation*}
where we think of $a$ as an element of $H^k_\sigma(E_2 \times {\rm U}(1) , \mathbb{Z})$ and $b$ an element of $H_\sigma^{k-1}(E_2 \times {\rm U}(1) , \mathbb{Z}_\sigma )$.

Under pull-back by the map $({\rm id},g) : E_2 \times M_2 \to E_2 \times {\rm U}(1)$ this becomes $a + \alpha \smallsmile b$ where we think of $\alpha$ as an element of $H^1_\sigma (M_2 , \mathbb{Z}_\sigma )$. Putting it all together we thus get
\begin{equation*}
\phi^*(x) = x + \pi^*(\alpha) \smallsmile \pi^* \pi_* x = x + \pi^*( \alpha \smallsmile \pi_* x )
\end{equation*}
as required.
\end{proof}


\section{Differential form approach to T-duality}

We will now present an alternative approach to T-duality via differential forms. This approach has the advantage that the construction of the $T$-dual is more transparent but has the drawback that we work with real cohomology so all torsion information is lost. On the other hand the differential form approach will lead directly to an isomorphism between twisted cohomology on $T$-dual spaces and is relevant when we consider the twisted Chern character and Courant algebroids.


\subsection{Twisted connections}\label{twcon}

Let $\pi : E \to M$ be a circle bundle over $M$, where $M$ and $E$ are now assumed to be smooth manifolds. If $E$ is a principal bundle we can choose a connection on $E$ to relate invariant forms on $E$ with forms on $M$. We would like to employ similar techniques in the general case of circle bundles.\\

We have seen that every circle bundle has the form $E = \tilde{E} \times_{{\rm O}(2)} S^1$ for some principal ${\rm O}(2)$-bundle. We can then view $E$ as a sort of {\em twisted principal circle bundle}. By this we mean that locally $E$ admits the structure of a principal ${\rm U}(1)$-bundle but that the local actions are related on overlaps by an element of $\mathbb{Z}_2 = {\rm Aut}(S^1)$. This local action depends on the identification of $E$ as the circle bundle associated to a principal ${\rm O}(2)$-bundle so is not canonical, however any two local actions are related by a fibre bundle automorphism of $E$ so that any two local actions are essentially the same.\\

Given a local action on $E$ the vertical bundle ${\rm Ker} (\pi_* : TE \to TM)$ can be viewed as a flat line bundle corresponding to an element $\xi \in H^1(M , \mathbb{Z}_2) = {\rm Hom}(\pi_1(M) , \mathbb{Z}_2)$. More precisely there is a flat line bundle $V$ on $M$ such that $\pi^*(V)$ can be identified with the vertical bundle.

We can find an open cover $\{ U_i \}$ of $M$ and vertical vector fields $X_i \in \Gamma(U_i,V)$ which generate the local action. On  the overlaps we have $X_i = \xi_{ij} X_j$ where $\xi_{ij}$ are valued in $\mathbb{Z}_2$ and is a representative for $\xi = w_1(V)$ in \v{C}ech cohomology. Put another way there is a double cover $M_2 \to M$ such that sections of $M_2 \to M$ correspond precisely to vector fields which restrict to $\pm X_i$ over $U_i$. Let $\mathcal{V}$ denote this sheaf.

\begin{defn}
A differential form $\omega \in \Omega^*(E)$ is {\em invariant} if for every open subset $U \subseteq M$ and section $X \in \mathcal{V}(U)$ we have $\mathcal{L}_X \omega|_U = 0$.
\end{defn}
Alternatively the double cover $p : \tilde{E} \to E$ is a principal ${\rm O}(2)$-bundle and a form $\omega \in \Omega^*(E)$ is invariant if $p^*\omega$ is invariant in the usual sense. Note that if $W$ is a flat vector bundle on $M$ then it makes sense to speak of invariant differential forms on $E$ with values in $\pi^*(W)$. There is also a notion of an invariant vector field which is defined similarly to invariant differential forms.

Next consider extending the notion of a connection to such bundles. 
\begin{defn}
A {\em twisted connection} on $E$ is an invariant form $A \in \Omega^1(E,\pi^*(V))$ such that restricted to the vertical bundle $A$ is the identity $\pi^*(V) \to \pi^*(V)$.
\end{defn}
A twisted connection can be viewed as a choice of invariant horizontal complement to the vertical bundle of $E$ or simply as an ordinary connection for $\tilde{E}$. We thus have:
\begin{prop}
Twisted connections always exist and form an affine space modeled on $\Omega^1(M,V)$.
\end{prop}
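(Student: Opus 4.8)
The plan is to prove existence by a partition of unity argument and to obtain the affine structure by identifying the difference of two twisted connections with a basic form pulled back from the base. First I would choose an open cover $\{U_i\}$ of $M$ carrying local vertical generators $X_i \in \mathcal{V}(U_i)$, over which $E$ restricts to an ordinary principal circle bundle. Each such restriction admits an ordinary invariant connection; concretely one can take the $\pi^*(V)$-valued form $A_i$ dual to $X_i$, which is well-defined independently of the sign ambiguity in $X_i$ precisely because the target $\pi^*(V)$ flips sign along with $X_i$, and which restricts to the identity on the vertical bundle by construction. Given a partition of unity $\{\rho_i\}$ subordinate to $\{U_i\}$ I would set $A = \sum_i \pi^*(\rho_i) A_i$ and verify that $A$ is a twisted connection: the functions $\pi^*(\rho_i)$ are pulled back from $M$ and hence invariant, each $A_i$ is invariant, so $A$ is invariant; and since each $A_i$ restricts to the identity on the vertical bundle while $\sum_i \pi^*(\rho_i) = 1$, so does the convex combination $A$. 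The point making this work is that the defining condition of a twisted connection is affine in $\pi^*(V)$-valued forms, so it survives convex combination even though the $A_i$ disagree on overlaps.

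For the affine structure I would note that if $A,A'$ are twisted connections then $A - A' \in \Omega^1(E,\pi^*(V))$ is invariant and annihilates vertical vectors, since both $A$ and $A'$ restrict to the identity there; in other words $A - A'$ is basic. The crux is then to show that a basic $\pi^*(V)$-valued $1$-form on $E$ is exactly the pullback $\pi^*(\beta)$ of a unique $\beta \in \Omega^1(M,V)$, while conversely $A + \pi^*(\beta)$ is again a twisted connection for any $\beta \in \Omega^1(M,V)$. The latter direction is immediate, so the essential content is the identification of basic forms with pullbacks. I expect this descent step to be the main obstacle, as it is where the twisting by $V$ and the non-orientability genuinely intervene: one must be careful that the flat coefficient bundle $\pi^*(V)$ descends to $V$ in a way compatible with the invariance condition, rather than to an ordinary real line.

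To carry out the descent cleanly I would pass to the principal ${\rm O}(2)$-bundle $\tilde{E}$, using the equivalence noted above between twisted connections on $E$ and ordinary principal connections on $\tilde{E}$. Basic forms pull back to $\tilde{E}$ as invariant horizontal forms, which by the standard theory of principal bundles are pullbacks from $M$; the only new feature is that the coefficient bundle $\pi^*(V)$ corresponds to the adjoint bundle $\tilde{E} \times_{{\rm O}(2)} \mathfrak{o}(2)$, whose structure is governed by the sign representation of ${\rm O}(2)/{\rm SO}(2)$ and is therefore exactly the flat line bundle $V$ with $w_1(V) = \xi$. Hence basic $\pi^*(V)$-valued forms descend precisely to $V$-valued forms on $M$. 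Granting this identification, the assignment $\beta \mapsto A + \pi^*(\beta)$ exhibits the set of twisted connections, which is nonempty by the first paragraph, as an affine space modeled on $\Omega^1(M,V)$, completing the proof.
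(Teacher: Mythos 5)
Your proof is correct and takes essentially the same route as the paper, which disposes of this proposition by the observation (made just before the statement) that a twisted connection is the same thing as an ordinary principal connection on the ${\rm O}(2)$-bundle $\tilde{E} \to M$, whose adjoint bundle $\tilde{E} \times_{{\rm O}(2)} \mathfrak{o}(2)$ is precisely the flat line bundle $V$; your descent step is exactly this identification. Your partition-of-unity construction of a twisted connection directly on $E$ is just the standard existence argument for principal connections transplanted from $\tilde{E}$, so it is a harmless variant rather than a genuinely different approach.
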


\begin{defn}
Given a twisted connection $A \in \Omega^1(E,\pi^*(V))$ there exists a unique form $F \in \Omega^2(M,V)$ such that 
\begin{equation*}
d_{\nabla} A = \pi^*(F)
\end{equation*}
where $\nabla$ is the flat connection on $V$. We call $F$ the {\em curvature} of $A$. Clearly we have $d_\nabla F = 0$ and thus the curvature determines a cohomology class with local coefficients $[F] \in H^2(M , V)$. It is not hard to see that $[F]$ is the image of the twisted Chern class $c_1 \in H^2(M , \Lambda)$ under the coefficient homomorphism $\Lambda_\xi \to \Lambda_\xi \otimes_{\mathbb{Z}} \mathbb{R} = V$.
\end{defn}


\subsection{Invariant cohomology}

Let $\pi : E \to M$ be a circle bundle an equip $E$ with the structure of a locally principal $S^1$-bundle. We then have a notion of invariant forms $\Omega^*_{\rm inv}(E) \subseteq \Omega^*(E)$ and it is clear that the subspace of invariant forms is preserved by the exterior derivative. Therefore we may consider the cohomology of $d$ restricted to invariant forms which we denote by $H_{\rm inv}^*(E)$. The inclusion $i : \Omega^*_{\rm inv}(E) \to \Omega^*(E)$ induces a morphism $i_* : H_{\rm inv}^*(E) \to H^*(E)$.

\begin{prop}\label{iso}
The morphism $i_* : H_{\rm inv}^*(E) \to H^*(E)$ is an isomorphism.
\begin{proof}
Let $\xi \in H^1(M , \mathbb{Z}_2)$ represent the monodromy of the vertical bundle. There is a corresponding principal $\mathbb{Z}_2$-bundle $m: M_2 \to M$ over $M$ with the property that $E_2 = m^*(E)$ is a principal $\mathbb{Z}_2 \ltimes {\rm U}(1) = {\rm O}(2)$-bundle over $M$.\\

It is clear that invariant forms on $E$ correspond precisely to ${\rm O}(2)$-invariant forms on $E_2$. Thus $H^*_{\rm inv}(E)$ is the cohomology of the complex $\left( \Omega^*(E_2)^{{\rm O}(2)} , d \right)$ of ${\rm O}(2)$-invariant differential forms on $E_2$ with respect to the exterior derivative. Next we argue this is the same as the $\mathbb{Z}_2$-invariant subspace of $H^*(E_2)$. In fact, since ${\rm O}(2)$ is compact we can average over ${\rm O}(2)$ giving a projection operator $P : \Omega^*(E_2) \to \Omega^*(E_2)$ which commutes with $d$. It follows that the cohomology of the complex $\left( \Omega^*(E_2)^{{\rm O}(2)} , d \right)$ is precisely the subspace of $H^*(E_2)$ fixed by ${\rm O}(2)$. However the action of ${\rm O}(2)$ must preserve the image of the integral cohomology $H^*(E_2)$ and so the identity component ${\rm SO}(2)$ of ${\rm O}(2)$ acts trivially, so the action factors through to the action of $\mathbb{Z}_2$.\\

So far we have shown that $H^*_{\rm inv}(E)$ is isomorphic the $H^*(E_2 )^{\mathbb{Z}_2}$, the subspace of $H^*(E_2)$ invariant under the $\mathbb{Z}_2$-action. However since $\mathbb{Z}_2$ is finite we can average over the fibres of $E_2 \to E$ and this gives an isomorphism $H^*(E_2)^{\mathbb{Z}_2} = H^*(E)$.
\end{proof}
\end{prop}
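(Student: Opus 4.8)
The plan is to reduce the statement to a computation on the double cover, where $E$ becomes an honest principal ${\rm O}(2)$-bundle, and then to exploit the compactness of the structure group. First I would invoke the double cover $m : M_2 \to M$ classified by $\xi = w_1(V) \in H^1(M,\mathbb{Z}_2)$, over which the pulled-back bundle $E_2 = m^*(E)$ carries the structure of a principal ${\rm O}(2)$-bundle. The key translation is that a form on $E$ is invariant, in the sense of the preceding definition, precisely when its pullback to $E_2$ is invariant under the full ${\rm O}(2)$-action: the local generators $\pm X_i$ of the twisted $S^1$-action lift to the genuine infinitesimal ${\rm SO}(2)$-action on $E_2$, while the $\mathbb{Z}_2$ ambiguity in their sign is implemented by the deck transformation, which is exactly the extra component of ${\rm O}(2)$. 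Granting this, $H^*_{\rm inv}(E)$ is computed by the subcomplex $(\Omega^*(E_2)^{{\rm O}(2)}, d)$ of ${\rm O}(2)$-invariant forms.

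Next I would use that ${\rm O}(2)$ is compact to average any form over the group, producing a projection $P : \Omega^*(E_2) \to \Omega^*(E_2)$ onto the invariant forms that commutes with $d$, since the action is by diffeomorphisms. The standard consequence of such an averaging operator is that the cohomology of the invariant subcomplex maps isomorphically onto the ${\rm O}(2)$-fixed part of $H^*(E_2)$, giving $H^*_{\rm inv}(E) \cong H^*(E_2)^{{\rm O}(2)}$.

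Then I would identify this fixed subspace. The identity component ${\rm SO}(2)$ is path-connected, so its action on $H^*(E_2)$ is homotopic to the identity and hence trivial; equivalently, the action preserves the image of $H^*(E_2,\mathbb{Z})$, a lattice, on which a connected group can act only trivially. Therefore the ${\rm O}(2)$-fixed subspace coincides with the $\mathbb{Z}_2$-fixed subspace, where $\mathbb{Z}_2 = {\rm O}(2)/{\rm SO}(2)$ acts as the deck group of $E_2 \to E$. Finally, since $E_2 \to E$ is a finite covering, fibrewise averaging (the transfer argument) identifies $H^*(E_2)^{\mathbb{Z}_2}$ with $H^*(E)$ in real cohomology. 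Composing these identifications yields an isomorphism $H^*_{\rm inv}(E) \cong H^*(E)$.

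The step I expect to require the most care is neither the averaging nor the connectedness argument, both of which are routine once the equivariant framework is in place, but rather the bookkeeping: verifying at the outset that invariance on $E$ matches ${\rm O}(2)$-invariance on $E_2$, and then tracking the entire chain of isomorphisms to confirm that the resulting map is \emph{exactly} the morphism $i_*$ induced by the inclusion $\Omega^*_{\rm inv}(E) \hookrightarrow \Omega^*(E)$, rather than merely some abstract isomorphism between groups of the same size. Keeping the maps natural throughout, so that each identification is compatible with pullback along $p : E_2 \to E$, is what makes the conclusion about $i_*$ itself, and that is where I would focus the attention.
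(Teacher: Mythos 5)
Your proposal is correct and follows essentially the same route as the paper's proof: pass to the double cover where $E_2$ is a principal ${\rm O}(2)$-bundle, identify invariant forms with ${\rm O}(2)$-invariant forms on $E_2$, average over the compact group to land in $H^*(E_2)^{{\rm O}(2)}$, use preservation of the integral lattice to kill the ${\rm SO}(2)$-action, and finish with the finite $\mathbb{Z}_2$ transfer identifying $H^*(E_2)^{\mathbb{Z}_2}$ with $H^*(E)$. Your closing attention to naturality (checking the composite really is $i_*$) is a sensible refinement the paper leaves implicit, but it is not a different argument.
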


Let $H \in \Omega^3(E)$ represent a class in $H^3(E)$. The $H$-twisted cohomology is the cohomology of the complex $\left( \Omega^*(E) , d_H \right)$ where the twisted differential $d_H$ is defined as $d_H \omega = d \omega + H \wedge \omega$. Note that there is a $\mathbb{Z}_2$-grading given by even and odd forms. Let $H^*(E,H)$ denote $H$-twisted cohomology. Twisted cohomology depends up to isomorphism only on the cohomology class of $H$ and thus we may take a representative form $H$ which is invariant. Upon making such a choice the twisted differential $d_H$ preserves the subspace of invariant forms and thus we can take cohomology of the complex $\left( \Omega^*_{\rm inv}(E) , d_H \right)$. Let $H^*_{\rm inv}(E,H)$ denote the cohomology of this complex. There is an evident map $i_* : H^*_{\rm inv}(E,H) \to H^*(E,H)$.

\begin{prop}\label{inviso}
The map $i_* : H^*_{\rm inv}(E,H) \to H^*(E,H)$ is an isomorphism.
\begin{proof}
We would like to repeat the proof of Proposition \ref{iso}, however there is an additional technical detail to address. Let $M_2 \to M$ and $m : E_2 \to E$ be the double covers as in \ref{iso}. We similarly find that $H^*_{\rm inv}(E,H)$ is isomorphic to the ${\rm O}(2)$-invariant subspace of $H^*(E_2,m^*H)$. What is not so obvious is that the ${\rm O}(2)$-action once again factors through to a $\mathbb{Z}_2$-action. To see this note that the twisted differential $d_{m^*H}$ on $\Omega^*(E_2)$ preserves the filtration $\Omega^{* \ge \, j}(E_2)$ of forms of degree greater than or equal to some integer. There is an associated spectral sequence which computes the twisted cohomology. The $E_1$ page in this sequence is just the ordinary cohomology $H^*(E_2)$. 

The ${\rm O}(2)$-action on forms preserves the filtration and thus induces an action on each page the spectral sequence. However we can see that the ${\rm SO}(2)$-subgroup must act trivially on the $E_1$ page since it preserves the lattice of integral cohomology classes. 

The filtration on forms induces a filtration on the twisted cohomology $H^*(E_2,m^*H)$ and the spectral sequence converges to the associated graded space. The Lie algebra of ${\rm O}(2)$ acts trivially on the associated graded space, so it must act on $H^*(E_2,m^*H)$ by nilpotent elements. But since this Lie algebra action must integrate to an action of ${\rm SO}(2)$, we are looking for a nilpotent map $N : H^*(E_2,m^*H) \to H^*(E_2,m^*H)$ such that $e^{2 \pi N} = 1$. This can only happen if $N=0$.\\

We have thus shown that $H^*_{\rm inv}(E,H)$ is isomorphic to the $\mathbb{Z}_2$-invariant subspace of $H^*(E_2,m^*H)$, but by a similar argument to \ref{iso} this is precisely $H^*(E,H)$.
\end{proof}
\end{prop}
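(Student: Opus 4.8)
The plan is to follow the strategy of Proposition \ref{iso} as far as possible, reducing everything to an equivariant question on the principal ${\rm O}(2)$-bundle $E_2 = m^*(E)$, and then to overcome the single new difficulty: that the twisted cohomology $H^*(E_2, m^*H)$ carries no obvious integral lattice. First I would observe that since $H$ is an invariant form on $E$, its pullback $m^*H$ is ${\rm O}(2)$-invariant on $E_2$, and that invariant forms on $E$ correspond exactly to ${\rm O}(2)$-invariant forms on $E_2$. Because ${\rm O}(2)$ is compact, averaging produces a projection $P : \Omega^*(E_2) \to \Omega^*(E_2)$ onto the invariant forms which commutes with $d_{m^*H}$, the point being that the averaging action preserves $m^*H$. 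This identifies $H^*_{\rm inv}(E,H)$ with the ${\rm O}(2)$-invariant subspace of $H^*(E_2, m^*H)$.

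The crux is then to show that this ${\rm O}(2)$-action factors through the quotient $\mathbb{Z}_2$, equivalently that the identity component ${\rm SO}(2)$ acts trivially on $H^*(E_2, m^*H)$. In the untwisted case this was immediate because ${\rm SO}(2)$ preserves the integral lattice in $H^*(E_2,\mathbb{Z})$; here no such lattice is available, so this is the main obstacle. To get around it I would introduce the filtration of $\Omega^*(E_2)$ by form degree, $\Omega^{*\ge j}(E_2)$, which is preserved by $d_{m^*H}$ since $H$ raises degree. The associated spectral sequence converges to $H^*(E_2, m^*H)$ and has $E_1$-page equal to the ordinary de Rham cohomology $H^*(E_2)$. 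The ${\rm O}(2)$-action respects this filtration, hence acts on every page, and on $E_1 = H^*(E_2)$ the subgroup ${\rm SO}(2)$ again acts trivially by the integral-lattice argument of Proposition \ref{iso}.

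Consequently ${\rm SO}(2)$ acts trivially on the associated graded of $H^*(E_2, m^*H)$, so the induced action of its Lie algebra is by nilpotent endomorphisms: there is a nilpotent operator $N$ on $H^*(E_2, m^*H)$ generating the ${\rm SO}(2)$-action. But this action integrates to a circle action, forcing $e^{2\pi N} = 1$ with $N$ nilpotent, which is only possible if $N = 0$. Hence ${\rm SO}(2)$ acts trivially and the ${\rm O}(2)$-action descends to a $\mathbb{Z}_2$-action, so that $H^*_{\rm inv}(E,H)$ is isomorphic to $H^*(E_2, m^*H)^{\mathbb{Z}_2}$. Finally, since $E_2 \to E$ is a $\mathbb{Z}_2$-cover and $m^*H$ is pulled back from $E$, averaging over its fibres gives an isomorphism $H^*(E_2, m^*H)^{\mathbb{Z}_2} \cong H^*(E, H)$, completing the identification of $i_*$ with an isomorphism.

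I expect the nilpotency argument replacing the lattice argument to be the delicate point: one must be careful that the filtration is genuinely $d_{m^*H}$-stable and ${\rm O}(2)$-equivariant, and that triviality of the ${\rm SO}(2)$-action on the graded pieces really does force a nilpotent generator rather than merely a unipotent one. Everything else runs parallel to Proposition \ref{iso}, so once this step is secured the proof closes immediately.
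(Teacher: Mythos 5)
Your proposal is correct and follows essentially the same route as the paper's own proof: the same reduction to ${\rm O}(2)$-invariant forms on $E_2$ via averaging, the same degree filtration and spectral sequence with $E_1 = H^*(E_2)$, the same integral-lattice argument killing the ${\rm SO}(2)$-action on $E_1$, and the same nilpotent-generator argument ($N$ nilpotent with $e^{2\pi N}=1$ forces $N=0$) to descend triviality to $H^*(E_2,m^*H)$ itself. The final averaging over the $\mathbb{Z}_2$-cover to identify the invariants with $H^*(E,H)$ also matches the paper.
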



\subsection{T-duality}\label{tdco}
Let $\pi : E \to M$ be a circle bundle as before and now introduce a class $H \in H^3(E,\mathbb{Z})$. Suppose that $H$ is represented by a differential form in $\Omega^3(E)$ which we also denote by $H$. By Proposition \ref{iso} we also assume the representative is invariant. Next let $A \in \Omega^1(E,\pi^*(V))$ be a twisted connection with curvature $F \in \Omega^2(F,V)$. We may now decompose $H$ as follows:
\begin{equation}\label{hdec}
H = \pi^*(H_3) + A \wedge \pi^* \hat{F}
\end{equation}
where $H_3 \in \Omega^3(M)$ and $\hat{F} \in \Omega^2(M,V^*)$. The wedge product in (\ref{hdec}) combines the wedge product of forms with the pairing of local systems $V \otimes V = V \otimes V^* \to \mathbb{R}$. That $H$ is closed is equivalent to the pair of equations
\begin{eqnarray*}
dH_3 + F \wedge \hat{F} &=& 0 \\
d_{\nabla} \hat{F} &=& 0.
\end{eqnarray*}
Notice that the second equation implies that $\hat{F}$ determines a cohomology class $[\hat{F}] \in H^2(M,V^*)$ and the first equation implies $[F] \smallsmile [\hat{F}] = 0$. Indeed it is clear that $[\hat{F}]$ is the image in $H^2(M,V^*)$ of the class $\hat{c} = \pi_*(H) \in H^2(M,\Lambda^*)$ under change of coefficients $\Lambda^* \to \Lambda^* \otimes_{\mathbb{Z}} \mathbb{R} = V^*$. As we have seen the class $\hat{c}$ determines a circle bundle $\hat{\pi} : \hat{E} \to M$. It follows that there exists a twisted connection $\hat{A} \in \Omega^1(\hat{E} , \hat{\pi}^* (V^*))$ on $\hat{E}$ such that $\hat{F}$ is the curvature of $\hat{A}$. Now we define $\hat{H} \in \Omega^3(\hat{E})$ by
\begin{equation}\label{hhdec}
\hat{H} = \hat{\pi}^*(H_3) + \hat{A} \wedge \hat{\pi}^* F.
\end{equation}
By construction we find that $d \hat{H} = 0$ and thus $\hat{H}$ represents a class $[\hat{H}] \in H^3(\hat{E})$. Observe also that $[\hat{\pi}_* \hat{H}] = [F]$ and that
\begin{equation*}
p^*H - \hat{p}^* \hat{H} = d ( \hat{p}^*\hat{A} \wedge p^* A )
\end{equation*}
so that $p^* [H] - \hat{p}^* [\hat{H}] = 0 \in H^3(E \times_M \hat{E})$.\\

The construction of the pair $(\hat{E},\hat{H})$ relied on a number of choices, namely a choice of invariant representative for $[H]$ and choice of twisted connections $A,\hat{A}$. If one goes through the above construction allowing for all different possible choices we find that $\hat{H}$ changes at most by a term of the form $\pi^*( e \wedge F )$ where $e$ is a $d_\nabla$-closed element of $\Omega^1(M,V^*)$. Since the class of $e$ in $H^1(M,V^*)$ need not be integral, there is more ambiguity in defining $\hat{H}$ than arises from bundle isomorphisms as determined by Lemma \ref{buniso}. Therefore this construction does not fix the isomorphism class of $\hat{H}$ completely. However we have the following:
\begin{prop}
For any choice of representatives $H,A,\hat{A}$ one can make a change $\hat{A} \mapsto \hat{A} + \hat{\pi}^* e$ for some $d_\nabla$-closed element $e \in \Omega^1(M,V^*)$ so that the class of $\hat{H}$ lies in the image $H^3(\hat{E},\mathbb{Z}) \to H^3(\hat{E},\mathbb{R})$ of integral cohomology in real cohomology. With this additional condition the pair $(\hat{E} , [\hat{H}])$ is unique up to bundle isomorphism. Moreover for each $h \in H^3(E,\mathbb{Z})$ such that $[H]$ is the image of $h$ in real cohomology there exists $\hat{h} \in H^3(\hat{E},\mathbb{Z})$ such that $[\hat{H}]$ is the image of $\hat{h}$ in real cohomology and the pairs $(E,h)$,$(\hat{E},\hat{h})$ are $T$-dual.
\begin{proof}
Let $h \in H^3(E,\mathbb{Z})$ be a class such that the image $h_{\mathbb{R}}$ of $h$ in real cohomology equals $[H]$. Let $\hat{h} \in H^3(\hat{E},\mathbb{Z})$ be such that $(\hat{E},\hat{h})$ is T-dual to $(E,h)$. Let $a = \hat{h}_\mathbb{R} - [\hat{H}] \in H^3(\hat{E},\mathbb{R})$. By construction $\hat{p}^* a = 0$ and $\hat{\pi}_* a = 0$, so by Lemma \ref{lem2} there exists a class $[e] \in H^1(M,\mathbb{R})$ represented by a $d_\nabla$-closed form $e \in \Omega^1(M,V^*)$ such that $a = \hat{\pi}^*( [e] \smallsmile F)$. 

Now replace the twisted connection $\hat{A}$ by $\hat{A} + \hat{\pi}^* e$. We still have $d_\nabla \hat{A} = \hat{\pi}^* F$ and we find that $\hat{H}$ changes to $\hat{H} + \hat{\pi}^* ( e \wedge F)$. The result now follows because $[ \hat{H} + \hat{\pi}^* ( e \wedge F ) ] = [\hat{H} ] + a = \hat{h}_\mathbb{R}$.
\end{proof}
\end{prop}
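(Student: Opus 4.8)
The plan is to reduce the entire statement to the already-established \emph{integral} T-duality of Proposition \ref{tdualexist}, using Lemma \ref{unique} to measure the gap between the real class $[\hat{H}]$ produced by the differential-form construction and a suitable integral target. First I would fix an integral lift $h \in H^3(E,\mathbb{Z})$ of $[H]$ (one exists because $H$ was chosen to represent an integral class) and apply Proposition \ref{tdualexist} to produce an integral T-dual $\hat{h} \in H^3(\hat{E},\mathbb{Z})$, so that $\hat{\pi}_*\hat{h} = F$, $\pi_* h = \hat{c}$ and $p^* h = \hat{p}^* \hat{h}$. The natural integral target for $[\hat{H}]$ is the real image $\hat{h}_\mathbb{R}$, so I set $a = \hat{h}_\mathbb{R} - [\hat{H}] \in H^3(\hat{E},\mathbb{R})$ and verify its two defining constraints. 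Since the construction gives $\hat{\pi}_*[\hat{H}] = [F]$ and push-forward commutes with the integral-to-real map so that $\hat{\pi}_* \hat{h}_\mathbb{R} = [F]$, we get $\hat{\pi}_* a = 0$; and since the construction gives $\hat{p}^*[\hat{H}] = p^*[H]$ while T-duality gives $\hat{p}^* \hat{h}_\mathbb{R} = p^* h_\mathbb{R} = p^*[H]$, we get $\hat{p}^* a = 0$.

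Next I would invoke the real-coefficient version of Lemma \ref{unique}: its proof is a comparison of Leray--Serre spectral sequences and goes through verbatim with $\mathbb{R}$ in place of $\mathbb{Z}$, so the vanishing $\hat{\pi}_* a = \hat{p}^* a = 0$ yields $a = \hat{\pi}^*(\alpha \smallsmile F)$ for some $\alpha \in H^1(M, V^*) = H^1(M,\mathbb{R}_\xi)$. Represent $\alpha$ by a $d_\nabla$-closed form $e \in \Omega^1(M, V^*)$. The modification $\hat{A} \mapsto \hat{A} + \hat{\pi}^* e$ preserves $d_\nabla \hat{A} = \hat{\pi}^* F$ and changes $\hat{H}$ precisely by $\hat{\pi}^*(e \wedge F)$, whose class is $a$; hence the new class becomes $[\hat{H}] + a = \hat{h}_\mathbb{R}$, which is integral. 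This single move establishes both the integrality claim and the existence claim, the required integral lift being exactly the $\hat{h}$ just constructed, which is T-dual to $h$ by construction.

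For uniqueness up to bundle isomorphism I would observe that any integral output of the construction is of the form $\hat{h}_\mathbb{R}$ for an integral T-dual of an integral lift of $[H]$; the uniqueness half of Proposition \ref{tdualexist} forces such integral T-duals to agree up to a bundle isomorphism of $\hat{E}$, and Lemma \ref{buniso} identifies the induced action on $H^3$ as the shift by $\hat{\pi}^*(\alpha \smallsmile F)$ with $\alpha \in H^1(M,\mathbb{Z}_\xi)$, which is exactly the residual ambiguity compatible with integrality. The main obstacle I anticipate is the careful bookkeeping needed to confirm $a \in \ker \hat{\pi}_* \cap \ker \hat{p}^*$ — this hinges on correctly lining up the form-level identities $\hat{\pi}_*[\hat{H}] = [F]$ and $\hat{p}^*[\hat{H}] = p^*[H]$ with the integral T-duality relations — together with checking that Lemma \ref{unique}, stated over $\mathbb{Z}$, genuinely transfers to $\mathbb{R}$ coefficients without modification.
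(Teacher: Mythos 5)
Your proposal follows essentially the same route as the paper's own proof: fix an integral lift $h$ of $[H]$, take its integral T-dual $\hat{h}$ via Proposition \ref{tdualexist}, measure the discrepancy $a = \hat{h}_\mathbb{R} - [\hat{H}]$, apply Lemma \ref{lem2} (with real coefficients) to write $a = \hat{\pi}^*([e] \smallsmile F)$, and absorb it by the shift $\hat{A} \mapsto \hat{A} + \hat{\pi}^* e$. Your version is in fact slightly more careful than the paper's, since you explicitly verify the vanishing conditions $\hat{\pi}_* a = \hat{p}^* a = 0$, flag the transfer of Lemma \ref{lem2} from $\mathbb{Z}$ to $\mathbb{R}$ coefficients, and spell out the uniqueness argument via Lemma \ref{buniso}, all of which the paper leaves implicit.
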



\subsection{T-duality of twisted cohomology}
In the case of principal circle bundles it is one of the main properties of T-duality that the twisted cohomologies $H^*(E,H)$ and $H^{*-1}(\hat{E},\hat{H})$ are isomorphic. In the case of general circle bundles a similar result applies but needs to be modified to take into account the local system of fibre orientations. Recall that one proves the isomorphism of twisted cohomologies by means of the Hori formula \cite{bem}
\begin{equation}\label{hori}
\phi (\omega) = \hat{p}_* ( e^{-(p^*A \wedge \hat{p}^* \hat{A}) } \, p^* \omega )
\end{equation}
where $\phi$ is a map $\phi : \Omega^*(E) \to \Omega^*(\hat{E})$ that induces an isomorphism on twisted cohomology. In the case of non-oriented circle bundles this formula can still be applied, however the fibre integration $\hat{p}_*$ picks up the sign factor ambiguity due to fibre orientation. To formalize this we introduce a variant of de Rham cohomology twisted by closed $3$-forms and flat bundles. This twisted cohomology will be the target of the twisted Chern character for twisted $K$-theory.\\

Let $X$ be a smooth manifold. Given a closed $3$-form $H \in \Omega^3(X)$ and a flat vector bundle $(V,\nabla)$ we may define the $(\nabla,H)$-twisted de Rham complex. The space of the complex is $\Omega^*(X,V) = \mathcal{C}^\infty(\wedge^* T^*X \otimes V)$, the space of form-valued sections of $V$ and the differential $d_{(\nabla,H)}$ is characterized by the property
\begin{equation*}
d_{(\nabla,H)} (\omega \otimes v) = d \omega \otimes v + H \wedge \omega \otimes v + (-1)^p \omega \wedge \nabla v
\end{equation*}
where $\omega$ is a $p$-form and $v$ a section of $V$. Note that $d^2_{(\nabla , H)} = 0$ so we have a complex. The cohomology of the complex is the $(\nabla,H)$-twisted cohomology denoted $H^*(X,(\nabla,H))$ or $H^*(X,(V,H))$ when the connection $\nabla$ is understood. Note that the twisted cohomology is only $\mathbb{Z}_2$-graded where the grading comes from form parity. Up to isomorphism the twisted cohomology depends only on the cohomology class $H \in H^3(X,\mathbb{R})$ and the monodromy representation corresponding to $\nabla$. If $\rho : \pi_1(X) \to {\rm GL}(n,\mathbb{R})$ is a representation of the fundamental group of $X$ then we also use notation $H^*(X,(\rho,H))$ for the $(\nabla,H)$-twisted cohomology where $\nabla$ is the flat connection associated to $\rho$.\\

Now returning to T-duality suppose we have T-dual pairs $(E,H)$ and $(\hat{E},\hat{H})$ where $H$ and $\hat{H}$ are representative $3$-forms and $A,\hat{A}$ are twisted connections related to $H,\hat{H}$ as in (\ref{hdec}),(\ref{hhdec}) and recall
\begin{equation*}
p^*H - \hat{p}^* \hat{H} = d( \hat{p}^*\hat{A} \wedge p^* A ) = d \mathcal{B}
\end{equation*}
where we have defined $\mathcal{B} = \hat{p}^* \hat{A} \wedge p^* A \in \Omega^2(F)$. We define T-duality maps
\begin{eqnarray*}
&T& : H^*(E,H) \to H^{*-1}(\hat{E},(\xi,\hat{H})) \\
&T_\xi& : H^*(E,(\xi,H)) \to H^{*-1}(\hat{E},\hat{H})
\end{eqnarray*}
by the Hori formula (\ref{hori}):
\begin{eqnarray*}
T &=& \hat{p}_* \circ e^{\mathcal{B}} \circ p^* \\
T_\xi &=& \hat{p}_* \circ e^{\mathcal{B}} \circ p^*.
\end{eqnarray*}
Let $\mathbb{R}_\xi$ be the flat line bundle corresponding to $\xi \in H^1(M,\mathbb{Z}_2)$. The pushforward maps $\hat{p}_* : \Omega^k(F) \to \Omega^{k-1}(\hat{E},\mathbb{R}_\xi)$ and $\hat{p}_* : \Omega^k(F,\mathbb{R}_\xi) \to \Omega^{k-1}(\hat{E})$ are essentially fibre integration.\\

By symmetry of the T-duality relation we can easily define T-duality transformations $\hat{T},\hat{T}_\xi $ in the reverse direction:
\begin{eqnarray*}
\hat{T} &=& p_* \circ e^{-\mathcal{B}} \circ \hat{p}^* \\
\hat{T}_\xi &=& p_* \circ e^{-\mathcal{B}} \circ \hat{p}^*.
\end{eqnarray*}

\begin{prop}\label{tci}
Let $(E,H)$,$(\hat{E},\hat{H})$ be T-dual pairs with $\xi$ the orientation class of the fibres. We have isomorphisms of twisted cohomologies
\begin{eqnarray*}
H^*(E,H) &\simeq& H^{*-1}(\hat{E},(\xi,\hat{H})) \\
H^*(E,(\xi,H)) &\simeq& H^{*-1}(\hat{E},\hat{H}).
\end{eqnarray*}
More specifically the T-duality maps $T,T_\xi,\hat{T},\hat{T}_\xi$ are isomorphisms with $T^{-1} = -\hat{T}_\xi$ and $T_\xi^{-1} = -\hat{T}$.
\begin{proof}
We proved in Proposition \ref{inviso} that the twisted cohomology $H^*(E,H)$ can be computed using the subcomplex of invariant forms. The proof can easily to be extended to show that the twisted cohomology $H^*(E,(\xi,H))$ with local coefficients is likewise isomorphic to the subcomplex of invariant forms. The only difference is that $H^*_{\rm inv}(E,(\xi,H))$ corresponds to the $-1$-eigenspace of the $\mathbb{Z}_2$-action on $H^*(E_2 ,m^*H)$, where $m : E_2 \to E$ is the double cover determined by $\xi$.\\

Take $x \in \Omega_{\rm inv}^*(E)$. Using the twisted connection $A$ we may uniquely decompose $x$ as follows:
\begin{equation*}
x = \pi^*(\alpha) + A \wedge \pi^*(\beta)
\end{equation*}
where $\alpha \in \Omega^*(E)$, $\beta \in \Omega^{*-1}(E,\mathbb{R}_\xi)$. One easily checks that $Tx \in \Omega_{\rm inv}^{*-1}(E,\mathbb{R}_\xi)$ is given by
\begin{equation*}
Tx = \hat{\pi}^*(\beta) - \hat{A} \wedge \hat{\pi}^* (\alpha ).
\end{equation*}
Repeating this we find that $\hat{T}_\xi T x = -x$, so that $T$ is invertible on the space of invariant forms with inverse $-\hat{T}_\xi$. Since every cohomology class has an invariant representative the result follows. The proof that $T_\xi^{-1} = -\hat{T}$ is identical.
\end{proof}
\end{prop}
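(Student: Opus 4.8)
The plan is to reduce the entire statement to the subcomplex of invariant forms and then compute the T-duality maps completely explicitly there. By Proposition \ref{inviso} the inclusion of invariant forms induces an isomorphism $H^*_{\rm inv}(E,H)\simeq H^*(E,H)$, and the same argument—now identifying $H^*_{\rm inv}(E,(\xi,H))$ with the $-1$-eigenspace of the $\mathbb{Z}_2$-action on $H^*(E_2,m^*H)$ rather than the $+1$-eigenspace—shows that invariant $\mathbb{R}_\xi$-valued forms compute $H^*(E,(\xi,H))$. It therefore suffices to show that $T,T_\xi,\hat T,\hat T_\xi$ are mutually inverse up to sign on the invariant subcomplexes. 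I would first observe that each of $p^*$, multiplication by $e^{\mathcal B}$, and $\hat p_*$ preserves invariance, and that the defining relation $p^*H-\hat p^*\hat H=d\mathcal B$, together with the fact that fibre integration $\hat p_*$ commutes with the twisted differential while picking up precisely the $\mathbb{R}_\xi$ orientation twist, guarantees that $T$ is a chain map $\Omega^*_{\rm inv}(E)\to\Omega^{*-1}_{\rm inv}(\hat E,\mathbb{R}_\xi)$ and hence descends to twisted cohomology.

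The heart of the argument is an explicit formula for $T$ on invariant forms. Using the twisted connection $A$ I would write any invariant $x$ uniquely as $x=\pi^*\alpha+A\wedge\pi^*\beta$ with $\alpha\in\Omega^*(M)$ and $\beta\in\Omega^{*-1}(M,\mathbb{R}_\xi)$, and then compute $\hat p_*(e^{\mathcal B}p^*x)$ directly. Two simplifications make this tractable: since $\mathcal B=\hat p^*\hat A\wedge p^*A$ is a two-form whose square vanishes (locally $\mathcal B$ looks like $\hat a\wedge a$, so $\mathcal B^2$ contains a repeated one-form factor), one has $e^{\mathcal B}=1+\mathcal B$; and any monomial containing $p^*A\wedge p^*A$ drops out. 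Fibre integration along $\hat p$—whose fibre is the circle fibre of $E$—then annihilates the terms horizontal in the $E$-direction and converts the connection factor $p^*A$ into $1$, now valued in $\mathbb{R}_\xi$, which is exactly where the new twist enters. The outcome is the clean formula $Tx=\hat\pi^*\beta-\hat A\wedge\hat\pi^*\alpha$.

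With this formula in hand the conclusion is essentially a one-line verification. Applying the same computation to $\hat T_\xi=p_*\circ e^{-\mathcal B}\circ\hat p^*$ on the invariant form $Tx$ returns $-x$, so $T$ is invertible on invariant forms with inverse $-\hat T_\xi$; since every twisted cohomology class has an invariant representative, the first isomorphism $H^*(E,H)\simeq H^{*-1}(\hat E,(\xi,\hat H))$ follows at once. The computation for $T_\xi$ and $\hat T$ is identical after interchanging the roles of the trivial and the $\mathbb{R}_\xi$-twisted coefficient systems, giving $T_\xi^{-1}=-\hat T$ and the second isomorphism.

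I expect the main obstacle to be bookkeeping rather than anything conceptual: keeping the signs consistent through two successive fibre integrations over circle fibres, and tracking the $\mathbb{R}_\xi$-valuedness carefully enough that the domains and codomains of $T$ and $\hat T_\xi$ match. In particular the global sign in $T^{-1}=-\hat T_\xi$ arises from the interplay of the two fibre integrations with the minus sign in $Tx=\hat\pi^*\beta-\hat A\wedge\hat\pi^*\alpha$, so I would fix orientation and fibre-integration conventions at the very start to be sure these combine correctly. The single point needing genuine care is checking that $\hat p_*$ really intertwines $d_{\hat p^*\hat H}$ with $d_{(\nabla,\hat H)}$ on $\mathbb{R}_\xi$-valued forms, that is, that the twist fibre integration introduces is exactly $\xi$ and nothing further.
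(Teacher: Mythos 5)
Your proposal is correct and follows essentially the same route as the paper: reduce to invariant forms via Proposition \ref{inviso} (with the $-1$-eigenspace identification for the $\mathbb{R}_\xi$-twisted case), decompose $x=\pi^*\alpha+A\wedge\pi^*\beta$, derive $Tx=\hat\pi^*\beta-\hat A\wedge\hat\pi^*\alpha$, and conclude $\hat T_\xi Tx=-x$. The extra details you supply (that $e^{\mathcal B}=1+\mathcal B$ since $\mathcal B^2=0$, and that fibre integration kills the horizontal terms) are exactly the content of the paper's ``one easily checks'' step.
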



\section{Twisted $K$-theory}

Having demonstrated that T-duality gives an isomorphism between twisted cohomologies we would like to lift our results to twisted $K$-theory. For this we need $K$-theory twisted not only by elements of $H^3(X,\mathbb{Z})$ but also by $H^1(X,\mathbb{Z}_2)$. We will provide an overview of such twisted $K$-theory in terms of graded bundle gerbes.\\

The $K$-theory of a space twisted by classes in $H^1(X,\mathbb{Z}_2) \times H^3(X,\mathbb{Z})$ was introduced by Donovan and Karoubi \cite{dk} in the torsion case and by Rosenberg \cite{ros} for arbitrary classes in $H^3(X,\mathbb{Z})$. Other papers introducing twisted $K$-theory are \cite{bcmms}, \cite{atseg1} and \cite{fht}. We mostly follow the paper \cite{fht} of Freed, Hopkins and Teleman since they work with general classes in $H^1(X,\mathbb{Z}_2) \times H^3(X,\mathbb{Z})$. However we have chosen to describe twists of $K$-theory in terms of bundle gerbes rather than graded central extensions of groupoids.


\subsection{Graded bundle gerbes}\label{grbg}

On a given space $X$ the different possible twists of $K$-theory are classified by $H^1(X,\mathbb{Z}_2) \times H^3(X,\mathbb{Z})$, however this only classifies twists up to isomorphism. In the various models of twisted $K$-theory one can introduce a category of twists on $X$ which has $H^1(X,\mathbb{Z}_2) \times H^3(X,\mathbb{Z})$ as the set of isomorphism classes of objects. We will describe twists using a notion of graded bundle gerbes, a mild extension of the notion of bundle gerbe as introduced by Murray \cite{mm}. Our presentation is a straightforward adaptation of \cite{mm}, \cite{stev}, \cite{bcmms}. Graded bundle gerbes are close to the definition of twists used in \cite{fht} using graded central extensions of groupoids.\\

Let $X$ be a Hausdorff space. We will be concerned with pairs $(Y,\pi)$ where $Y$ is a Hausdorff space and $\pi : Y \to X$ a surjective map that admits local sections. If $(Y,\pi)$,$(Z,\mu)$ are two such pairs over $X$ then the fibre product $(Y \times_X Z , \pi \times_X \mu)$ is also a pair. When we turn to the twisted Chern character we will need to work in the smooth category so a (smooth) pair $(Y,\pi)$ over a smooth manifold $X$ is a smooth manifold $Y$ and a surjective submersion $\pi : Y \to X$ that admits local sections. In this case the fibre product of two smooth pairs is again a smooth pair.\\

A pair $(Y,\pi)$ over a space $X$ has a groupoid interpretation. We think of $X$ as a topological groupoid with only identity morphisms and we think of $Y$ as a topological groupoid with objects $Y^{[1]} = Y$ and morphisms $Y^{[2]} = Y \times_X Y$, so that any two objects of $Y$ have exactly one morphism between them if they lie in the same fibre of $\pi$. There is an evident functor $\pi : Y \to X$ which is a local equivalence in the language of \cite{fht}.

\begin{ex} Let $\mathcal{U} = \{ U_i \}_{i \in I}$ be an open cover of $X$. We let $X_{\mathcal{U}} = \coprod_{i \in I} U_i$ be the disjoint union. The evident map $\pi : X_{\mathcal{U}} \to X$ gives a pair $(X_{\mathcal{U}},\pi)$ over $X$.
\end{ex}

\begin{ex} Let $\pi : P \to X$ be a principal $G$-bundle. Then the groupoid structure on $P$ associated to the map $\pi : P \to X$ coincides with the groupoid structure on $P$ given by the right action of $G$. This groupoid is usually denoted $P /\!/ G$.
\end{ex}

\begin{defn} A {\em grading} on a groupoid $G = ({\rm Obj}(G),{\rm Mor}(G))$ is a functor $\epsilon : G \to \{{\rm pt}\} /\!/ \mathbb{Z}_2$, that is an assignment $\epsilon : {\rm Mor}(G) \to \mathbb{Z}_2$ of $\pm 1$ to morphisms of $G$ such that $\epsilon(fg) = \epsilon(f)\epsilon(g)$ when the composition is defined.
\end{defn}

\begin{ex} A grading for a pair $(X_{\mathcal{U}},\pi)$ associated to a cover $\mathcal{U}$ of $X$ is a collection of continuous maps $\xi_{ij} : U_{ij} \to \mathbb{Z}_2$ satisfying the cocycle condition. That is a grading for $(X_{\mathcal{U}},\pi)$ is a $\mathbb{Z}_2$-valued \v{C}ech $1$-cocycle.
\end{ex}

\begin{ex} If $X$ is connected and $\pi : P \to X$ a principal $G$-bundle then a grading for $P /\!/ G$ is precisely a group homomorphism $G \to \mathbb{Z}_2$.
\end{ex}

\begin{defn} Let $(Y,\pi)$ be a pair over $X$ regarded as a groupoid. $Y$. A {\em central extension} of $Y$ is a topological groupoid $L$ and functor $p : L \to Y$ such that $p : {\rm Obj}(L) \to Y^{[1]}$ is a homeomorphism, $p : {\rm Mor}(L) \to Y^{[2]}$ is a Hermitian line bundle and the groupoid multiplication in $L$ is complex linear and Hermitian in the fibres of $p$. 

A {\em bundle gerbe} $(Y,\pi,L,p)$ on $X$ is consists of a pair $(Y,\pi)$ over $X$ and a central extension $p : L \to Y$.

A {\em graded bundle gerbe} is a bundle gerbe $(Y,\pi,L,p)$ and a grading $\epsilon : Y \to \{{\rm pt}\} /\! / \mathbb{Z}_2$.

We will say that a gerbe $(Y,\pi,L,p)$ or graded gerbe $(Y,\pi,L,p,\epsilon)$ is {\em associated} to the pair $(Y,\pi)$.
\end{defn}

To each morphism $f \in Y^{[2]}$ we get a complex line $L_f$ and isomorphisms $m_{f,g} : L_f \otimes L_g \to L_{f g}$ whenever the composition $fg$ is defined. This composition must be associative in the evident sense. We can think of the grading $\epsilon : Y^{[2]} \to \mathbb{Z}_2$ as assigning to each line $L_f$ a grading $\epsilon(f) \in \mathbb{Z}_2$ and the multiplication $m_{f,g}$ respects the grading. Throughout this section the tensor product is to be understood in the graded sense, so there is a sign involved when exchanging factors of a tensor product.\\

Fix a pair $(Y,\pi)$ over $X$. We say that two graded gerbes $(Y,\pi,L,p,\epsilon)$, $(Y,\pi,L',p',\epsilon')$ are {\em strictly isomorphic} if there is an invertible functor of groupoids $F : L \to L'$ such that $p = p' \circ F$ and $F$ is a linear isometry between the fibres of $p : L \to Y$ and $p' : L' \to Y$.\\

The product of graded gerbes $(Y,\pi,L,p,\epsilon) \otimes (Y,\pi,L',p',\epsilon')$ is a graded gerbe of the form $(Y,\pi, L \otimes L' , \epsilon + \epsilon')$. Here $L \otimes L'$ denotes the graded tensor product of $L$ and $L'$. Let $f,g \in Y^{[2]}$ be morphisms such that the composition $fg$ is defined. The groupoid structure on $L \otimes L'$ is defined as follows:
\begin{equation*}\xymatrix{
(L_f \otimes L'_f ) \otimes (L_g \otimes L'_g) \ar[r]^{sw} & (L_f \otimes L_g) \otimes (L'_f \otimes L'_g) \ar[rr]^-{m_{f,g} \otimes m'_{f,g}} && L_{fg} \otimes L'_{fg}
}
\end{equation*}
where $sw$ is the morphism which swaps the inner two factors with appropriate sign factor. It is this sign factor that makes a difference between graded gerbes and ungraded gerbes.\\

One way to construct a graded gerbe is a follows: let $(T,t)$ be a graded Hermitian line bundle over $Y^{[1]}$, that is $T \to Y^{[1]}$ is a line bundle and $t : Y^{[1]} \to \mathbb{Z}_2$ a locally constant function which specifies a grading on the fibres of $T$. Let $\pi_0,\pi_1 : Y^{[2]} \to Y^{[1]}$ be the source and target maps. Then we define a graded gerbe $(Y,\pi,L,p,\epsilon)$ as follows: we let $L = \pi_0^*(T) \otimes \pi_1(T^{-1})$ and $\epsilon = \pi_0^*(t) - \pi_1^*(t)$. 

To define the groupoid structure on $L$, let $f,g \in Y^{[2]}$ be a pair such that the composition $fg$ is defined. Under the inclusion $Y^{[2]} \subseteq Y^{[1]} \times Y^{[1]}$ we may write $f = (i,j),g = (j,k),fg = (i,k)$. The groupoid structure on $L$ is the following:
\begin{equation*}\xymatrix{
L_f \otimes L_g \ar@{=}[r] & T_i \otimes T_j^{-1} \otimes T_j \otimes T_k^{-1} \ar[r] & T_i \otimes T_k^{-1} \ar@{=}[r] & L_{fg}.
}
\end{equation*}
We let $\delta(T)$ denote this gerbe. Such a gerbe is called {\em trivial}.

Suppose that $(T,t),(T',t')$ are graded line bundles over $Y^{[1]}$ and $\delta(T),\delta(T')$ the associated gerbes. A grading preserving isomorphism $\sigma : T \to T'$ induces a canonical strict isomorphism $\delta(\sigma) : \delta(T) \to \delta(T')$.\\

We will now make a $2$-category out of graded bundle gerbes associated to a pair $(Y,\pi)$. A $1$-morphism $(T,t,\tau)$ between $(Y,\pi,L,p,\epsilon)$ and $(Y,\pi,L',p',\epsilon')$ consists of a graded line bundle $(T,t)$ over $Y^{[1]}$ and a strict isomorphism $\tau : L \otimes \delta(T) \to L'$.

A $2$-morphism $\sigma$ between $1$-morphisms $(T,t,\tau),(T',t',\tau')$ is an isomorphism $\sigma : T \to T'$ of graded line bundles inducing a strict isomorphism $\delta(\sigma) : \delta(T) \to \delta(T')$ such that the following diagram of strict isomorphisms commute:
\begin{equation*}\xymatrix{
L \otimes \delta(T) \ar[r]^\tau \ar[d]_{1 \otimes \delta(\sigma)} & L' \\
L \otimes \delta(T') \ar[ur]^{\tau'} & 
}
\end{equation*}
\\

Observe that the automorphisms of a graded gerbe modulo $2$-morphisms can be identified with the group $H^0(X,\mathbb{Z}_2) \times H^2(X,\mathbb{Z})$ of isomorphism classes of graded line bundles on $X$.\\

For a given pair $(Y,\pi)$ over $X$ let $\mathcal{G}_Y$ denote the $2$-category of gerbes associated to $(Y,\pi)$. There is an associated $1$-category $[\mathcal{G}_Y]$ which has the same objects as $\mathcal{G}_Y$ but the morphisms of $[\mathcal{G}_Y]$ are morphisms of $\mathcal{G}_Y$ modulo equivalence under $2$-morphisms.


\subsection{Twists of $K$-theory}\label{kthtw}

So far we have made a $2$-category $\mathcal{G}_Y$ out of graded bundle gerbes associated to a fixed pair $(Y,\pi)$ over $X$. Objects of $\mathcal{G}_Y$ determine twisted $K$-theory groups of $X$. With some effort we can assemble the $\mathcal{G}_Y$ for different $Y$ into a single category of twists on $X$. We will not spell out the details since the construction is somewhat complicated and not necessary for our application. The details can be found in \cite{fht} or \cite{stev}. However we will still need to work with gerbes defined for different pairs so we discuss the relevant points.\\

Given a pair $(Y,\pi)$ over $X$ twisted $K$-theory can be thought of as a functor $K : \mathcal{G}_Y \to {\rm Ab}$ from $\mathcal{G}_Y$ to the category of abelian groups which is made into a $2$-category with only identity $2$-morphisms. Put another way the functor $K$ factors to the $1$-category quotient $K : [\mathcal{G}_Y] \to {\rm Ab}$. Given a graded gerbe $\alpha \in \mathcal{G}_Y$ we let $K^*(X,\alpha)$ denote the corresponding twisted $K$-theory. It is a $\mathbb{Z}_2$-graded abelian group. The definition is given in Section \ref{kdef}.\\

To compare gerbes on different spaces we consider a general notion of strict morphism between graded gerbes. Let $X,X'$ be spaces $(Y,\pi),(Y',\pi')$ pairs over $X,X'$ and let $\alpha,\alpha'$ be graded gerbes $\alpha \in \mathcal{G}_Y$, $\alpha' \in \mathcal{G}_{Y'}$. Let $L \to Y^{[2]}$, $L' \to Y'^{[2]}$ be the corresponding line bundles. We will use the term {\em strict morphism} to mean a triple of maps $(f,g,h)$ forming a commutative diagrams
\begin{equation*}\xymatrix{
L \ar[d]^p \ar[r]^h & L' \ar[d]^{p'} \\
Y^{[2]} \ar[r]^{g^{[2]}} & Y'^{[2]}
}
\end{equation*}
and
\begin{equation*}\xymatrix{
Y \ar[r]^g \ar[d]^\pi & Y' \ar[d]^{\pi'} \\
X \ar[r]^f & X'
}
\end{equation*}
such that for each $u \in Y^{[2]}$ the map $h_u : L_u \to L'_{g^{[2]}u}$ is an isomorphism and such that $h$ respects the grading and multiplication structures in the evident sense \cite{stev}. We write $(f,g,h) : (X,\alpha) \to (X',\alpha')$. Provided that the map $f : X \to X'$ between bases is a proper map such a triple induces a pull-back operation $(f,g,h)^* : K^*(X',\alpha') \to K^*(X,\alpha)$. The pull-back makes $K$-theory into a contravariant functor between spaces with graded gerbes.

In the special case that $X' = X$ and $f = {\rm id}$ is the identity it follows from \cite{fht} that the pull-back $({\rm id},g,h)^* : K^*(X,\alpha') \to K^*(X,\alpha)$ is actually an isomorphism.\\

There is a notion of homotopy between triples $(f_0,g_0,h_0)$ and $(f_1,g_1,h_1)$. It can be viewed as a strict morphism $(f_t,g_t,h_t) : (X \times [0,1] , p^* \alpha) \to (X',\alpha')$ where $p$ is the projection $p : X \times [0,1] \to X$. For twisted $K$-theory we want to assume $f_0,f_1$ are proper and $f_t$ is a homotopy through proper maps. Homotopy invariance of twisted $K$-theory is the property that if $(f_0,g_0,h_0),(f_1,g_1,h_1)$ are homotopic triples then $(f_0,g_0,h_0)^* = (f_1,g_1,h_1)^*$.\\

There are some special cases of this pull-back operation to consider. Given a proper map $f : X \to X'$ between spaces, a pair $(Y',\pi')$ over $X'$ and a graded gerbe $\alpha' \in \mathcal{G}_{Y'}$ then there is an evident pull-back $Y = f^*(Y')$, $\alpha = f^*\alpha' \in \mathcal{G}_{Y}$ which defines a pull-back map $f^* : K^*(X',\alpha) \to K^*(X,f^*\alpha')$. Next consider the case $X' = X$, $f = {\rm id}$. Thus suppose that $(Y,\pi),(Y',\pi')$ are pairs over $X$. Given a map $g : Y \to Y'$ such that $\pi = \pi' \circ g$ and an element $\alpha' \in \mathcal{G}_{Y'}$ there is an evident pull-back $g^*\alpha' \in \mathcal{G}_Y$. This is a special type of strict morphism so we get a pull-back $g^* : K^*(X,\alpha') \to K^*(X,g^*\alpha')$, which as we have already remarked is in fact an isomorphism.\\

Using pull-back we can form the product of gerbes $A,B$ associated to different pairs $(Y,\pi),(Y',\pi')$ over $X$. We pull-back $A,B$ to gerbes associated to the fibre product $(Y \times_X Y' , \pi \times_X \pi')$ and then use the product stucture on $\mathcal{G}_{Y \times_X Y'}$ that we have already defined. The product defined in this way is associative. Two graded gerbes $A,B$ are said to be {\em stably isomorphic} if the product $A \otimes B^*$ of $A$ and the dual of $B$ is trivial \cite{bcmms}.\\

Given a graded bundle gerbe $\alpha$ over a pair $(Y,\pi)$ we can find an open cover $\{ U_i \}$ with sections $s_i : U_i \to Y$. This further defines sections $s_{ij} = (s_i,s_j) : U_{ij} \to Y^{[2]}$. If $Q\to Y^{[2]}$ is the principal ${\rm U}(1)$-bundle defined by $\alpha$ then we may assume the cover is such that $s_{ij}^* Q$ is trivial. Let $r_{ij} : U_{ij} \to Q$ be a section. Define ${\rm U}(1)$-valued maps $g_{ijk} : U_{ijk} \to {\rm U}(1)$ by $m( r_{ij} , r_{jk} ) = r_{ik} g_{ijk}$ where $m$ denotes the gerbe multiplication. One sees that $\{ g_{ijk} \}$ is a \v{C}ech cocycle and that the class of $\{g_{ijk} \}$ in $H^2(X, \mathcal{C}({\rm U}(1))) = H^3(X,\mathbb{Z})$ does not depend on the choice of cover or choice of local sections. The class $[g_{ijk}]$ is called the {\em (ungraded) Diximier-Douady class} of $\alpha$. If we assume also that the $U_{ij}$ are connected then $s_{ij}^* Q$ has either even or odd parity and this gives us a $\mathbb{Z}_2$-valued cocycle $e_{ij}$, the class of which lies in $H^1(X,\mathbb{Z}_2)$. The pair $([e_{ij}],[g_{ijk}]) \in H^1(X,\mathbb{Z}_2) \times H^3(X,\mathbb{Z})$ will be called the {\em graded Diximier-Douady class} of $\alpha$, or just the Diximier-Douady class if no confusion will arise. Stably isomorphic graded gerbes have the same Diximier-Douady class and it is not hard to see that the converse is also true.\\

Let $\mathcal{TW}_X$ denote the set of all graded gerbes on $X$. Elements of $\mathcal{TW}_X$ are what we use for twists of $K$-theory. The set $| \mathcal{TW}_X |$ of stable isomorphism classes of graded gerbes is isomorphic to $H^1(X,\mathbb{Z}_2) \times H^3(X,\mathbb{Z})$ through the Diximier-Douady class. The multiplication defines a group structure on the set $| \mathcal{TW}_X |$ of isomorphism classes of graded gerbes. One finds that $| \mathcal{TW}_X |$ fits into an exact sequence
\begin{equation*}
0 \to H^3(X,\mathbb{Z}) \to |\mathcal{TW}_X| \to H^1(X,\mathbb{Z}_2) \to 0.
\end{equation*}
Under the canonical splitting $| \mathcal{TW}_X | = H^1(X,\mathbb{Z}_2) \times H^3(X,\mathbb{Z})$ given by the Diximier-Douady class the group operation becomes the following \cite{fht}:
\begin{equation}\label{grplaw}
(v, V) (w,W) = (v+w , V+W + \beta(vw) )
\end{equation}
where $\beta : H^2(X,\mathbb{Z}_2) \to H^3(X,\mathbb{Z})$ is the Bockstein homomorphism.\\

Given a graded gerbe $\alpha \in \mathcal{TW}_X$ we let $|\alpha|$ denote the isomorphism class of $\alpha$ in $| \mathcal{TW}_X | = H^1(X,\mathbb{Z}_2) \times H^3(X,\mathbb{Z})$. Since the twisted $K$-theory group $K^*(X,\alpha)$ depends up to isomorphism only of the stable isomorphism class of $\alpha$ we sometimes write $K^*(X,|\alpha|)$. It is important to note however that if $\alpha,\alpha'$ have the same graded Diximier-Douady class then $K^*(X,\alpha)$ and $K^*(X,\alpha')$ are not canonically isomorphic.


\subsection{Definition}\label{kdef}

We outline the definition of twisted $K$-theory associated to a graded bundle gerbe. For the most part we will not work directly with the definition which is quite techincal. It is included mainly for completeness. Our definition is based upon \cite{fht}, \cite{cw0}, \cite{atseg1}.\\

Let $\mathcal{H} = \mathcal{H}_0 \oplus \mathcal{H}_1$ be a $\mathbb{Z}_2$-graded separable complex Hilbert space with both the even and odd degree subspaces infinite dimensional. Following \cite{atseg1} the group ${\rm U}(\mathcal{H})$ of unitary operators is given the compact-open topology as a subspace of ${\rm End}(\mathcal{H}) \times {\rm End}(\mathcal{H})$ through the map $g \mapsto (g , g^{-1})$. We then give the projective unitary group ${\rm PU}(\mathcal{H})$ quotient topology. If $Z$ is metrizable and we are given a continuous map $Z \to {\rm U}(\mathcal{H})$ then the associated map $Z \times \mathcal{H} \to Z \times \mathcal{H}$ is a homeomorphism, so we can speak of Hilbert bundles with structure group ${\rm U}(\mathcal{H})$. Similarly for any space on which ${\rm U}(\mathcal{H})$ or ${\rm PU}(\mathcal{H})$ act continuously we can speak of fibre bundles with structure group ${\rm U}(\mathcal{H})$ or ${\rm PU}(\mathcal{H})$. More relevant here is the subgroup ${\rm U}(\mathcal{H}_0) \times {\rm U}(\mathcal{H}_1)$ of ${\rm U}(\mathcal{H})$ preserving the $\mathbb{Z}_2$-grading and the subgroup $\mathbb{Z}_2 \ltimes \left( {\rm U}(\mathcal{H}_0) \times {\rm U}(\mathcal{H}_1) \right)$ of elements that either preserve or reverse the grading. A $\mathbb{Z}_2$-graded Hilbert bundle will be assumed to have structure group ${\rm U}(\mathcal{H}_0) \times {\rm U}(\mathcal{H}_1)$.\\

Let $\pi : Y \to X$ be a pair and $L \to Y^{[2]}$ a graded bundle gerbe associated to $Y$. A {\em Hilbert module} for $L$ is a $\mathbb{Z}_2$-graded Hilbert bundle $H \to Y$ together with an isomorphism $\phi : L \otimes \pi_1^* H \to \pi_2^* H$ of graded Hilbert bundles on $Y^{[2]}$. The isomorphism $\phi$ is required to be compatible with the gerbe multiplication on $Y^{[3]}$ in the evident sense. There is also an evident notion of morphism between Hilbert modules which requires compatibility with the gerbe module structure.\\

Following \cite{fht} a Hilbert module $H$ for $L$ is called {\em universal} if for every Hilbert module $V$ for $L$ there is a unitary embedding $V \to H$ and {\em locally universal} if $H|_U$ is universal for every open subset $U \subseteq X$. According to \cite[Lemma 3.12]{fht} every graded bundle gerbe admits a locally universal Hilbert module $H$ which is unique up to unitary isomorphism. For each $y \in Y$, the isomorphism $L_{(y,y)} \otimes H_y \to H_y$ determines a representation of ${\rm U}(1)$. By \cite[Lemma 3.11]{fht} the eigenbundle $H(1) \subseteq H$ on which ${\rm U}(1)$ acts by its defining representation is also locally universal. Henceforth we assume this property whenever we choose a locally universal Hilbert module. It follows that the projectivization $\mathbb{P}(H(1))$ descends to a bundle of projective Hilbert spaces over $X$. The associated structure group is the group $\mathbb{Z}_2 \ltimes P({\rm U}(\mathcal{H}_0) \times {\rm U}(\mathcal{H}_1))$ of projective unitary transformations of $\mathcal{H} = \mathcal{H}_0 \oplus \mathcal{H}_1$ that either preserve or interchange the subspaces $\mathbb{P}(\mathcal{H}_0),\mathbb{P}(\mathcal{H}_1)$.\\

We now introduce a space on which the group $\mathbb{Z}_2 \ltimes P({\rm U}(\mathcal{H}_0) \times {\rm U}(\mathcal{H}_1))$ acts. Let ${\rm Fred}^{(0)}(\mathcal{H})$ denote the space of odd skew-adjoint Fredholm operators $A$ for which $A^2 + 1$ is compact. The topology of ${\rm Fred}^{(0)}(\mathcal{H})$ is induced by the map ${\rm Fred}^{(0)} \to {\rm End}(\mathcal{H}) \times \mathcal{K}(\mathcal{H})$ sending $A$ to $(A,A^2+1)$, where ${\rm End}(\mathcal{H})$ is given the compact-open topology and $\mathcal{K}(\mathcal{H})$ is the space of compact operators with the norm topology. According to \cite{atseg1} the space ${\rm Fred}^{(0)}(\mathcal{H})$ is a classifying space for untwisted $K$-theory. 

For any graded bundle gerbe $L$ on $X$ and locally universal Hilbert module $H$ we get a bundle of projective spaces on $X$ with structure group $\mathbb{Z}_2 \ltimes P({\rm U}(\mathcal{H}_0) \times {\rm U}(\mathcal{H}_1))$. Since this group acts continuously on ${\rm Fred}^{(0)}(\mathcal{H})$ we get an associated fibre bundle ${\rm Fred}_L^{(0)}(H)$ of Fredholm operators. We would also like to define bundles $\Omega_X^n {\rm Fred}_L^{(0)}(H)$ whose fibres are iterated based loop spaces. For this a little care is required. The space ${\rm Fred}^{(0)}$ doesn't have a natural basepoint. We can take any invertible element as a base point but such a point will not be fixed by the group action. Thus to define the bundles $\Omega_X^n {\rm Fred}_L^{(0)}(H)$ of loop spaces we need to first choose a section $s_0 : X \to {\rm Fred}_L^{(0)}(H)$ which takes values in the invertible operators. Such a section exists because the space of invertible elements of ${\rm Fred}^{(0)}(\mathcal{H})$ is contractible.

\begin{defn} The twisted $K$-theory group $K^{-n}(X,L)$ of the graded bundle gerbe $L$ is the space of homotopy classes of compactly supported sections of the fibre bundle $\Omega_X^n {\rm Fred}_L^{(0)}(H)$. A section of ${\rm Fred}_L^{(0)}(H)$ is compactly supported if there is a compact subset of $X$ outside of which the section takes values in the invertible operators. A similar notion of compactly supported section applies to the bundles $\Omega_X^n{\rm Fred}_L^{(0)}(H)$.
\end{defn}

As it stands the definition appears to depend on the choice of Hilbert module $H$. However any two Hilbert modules are isomorphic and any two such isomorphisms can be shown to be homotopic \cite{fht}, so that there is a canonical identification of homotopy classes of sections of the associated bundles. From the definition we see that the $K^{-n}(X,L)$ have a natural abelian group structure.\\

Let $f : Y \to X$ be a proper map and suppose $L$ is a graded gerbe on $X$ and $H$ a locally universal Hilbert module. We can define the pullback $f^*L$ of $L$ and $f^*H$ with the property that $f^*H$ is a Hilbert module for $f^*L$. Let $H'$ be a locally universal Hilbert module for $f^*L$ and choose a unitary embedding $f^*H \to H'$. This induces a map $f^* \Omega^n_X {\rm Fred}^{(0)}_L (H) \to \Omega^n_Y {\rm Fred}^{(0)}_{f^*L}(H')$ and further a map $f^* : K^{-n}(X,L) \to K^{-n}(Y,f^*L)$ which can be shown to be independent of choices involved. More generally there is a pullback operation $(f,g,h)^*$ associated to triples $(f,g,h)$ as described in Section \ref{kthtw}.\\

In \cite{atseg1} it is shown that there is an equivariant homotopy equivalence ${\rm Fred}^{(0)}(\mathcal{H}) \to \Omega^2 {\rm Fred}^{(0)}(\mathcal{H})$. This gives natural isomorphisms $K^{-n-2}(X,L) \simeq K^{-n}(X,L)$ and allows the definition of $K^i(X,L)$ for positive $i$ by periodicity.


\subsection{Some properties of twisted $K$-theory}

We now outline some of the main propeties of twisted $K$-theory as developed in \cite{fht},\cite{cw0},\cite{atseg1}. These properties make it possible to carry out calculations of twisted $K$-theory groups without directly working with the defintion.\\

{\bf Multiplication}: given twists $\alpha , \beta \in \mathcal{TW}_X$ there is a corresponding multiplication:
\begin{equation*}
K^i(X,\alpha) \otimes K^j(X,\beta) \to K^{i+j}(X,\alpha \otimes \beta)
\end{equation*}
which is associative and graded commutative. In particular this gives $K^*(X,\alpha)$ the structure of a $K^*(X)$-module. \\

{\bf Push-forward}: To define the push-forward first we must explain how a graded gerbe can be associated to a vector bundle. Let $V \to X$ be a rank $k$ orthogonal vector bundle over $X$. Associated to $V$ is the principal ${\rm O}(k)$-bundle $\pi : P \to X$ of orthonormal frames. We view $(P,\pi)$ as a pair over $X$ or as the groupoid $P / \! / {\rm O}(k)$. There is a natural identification $P^{[2]} = P \times {\rm O}(k)$. Now the group\footnote{There are two distinct pin groups ${\rm Pin}_{\pm}(k)$ but the associated pin$\!\,^c$ groups are isomorphic.} ${\rm Pin}^c(k)$ fits into an exact sequence \cite{abs}
\begin{equation*}
1 \to {\rm U}(1) \to {\rm Pin}^c(k) \to {\rm O}(k) \to 1
\end{equation*}
and this defines a principal ${\rm U}(1)$-bundle over $P^{[2]}$ and an associated line bundle $L \to P^{[2]}$. The group multiplication in ${\rm Pin}^c(k)$ then determines a multiplication on $L$ giving it the structure of a bundle gerbe. Moreover the determinant homomorphism ${\rm O}(k) \to \mathbb{Z}_2$ gives $L$ the structure of a graded gerbe. We call $L = L(V)$ the {\em lifting gerbe} of $V$. The Dixmier-Douady class of $L$ is $|L| = (w_1(V),W_3(V))$, where $w_1(V)$ is the first Stiefel-Whitney class of $V$ and $W_3(V)$ is the third integral Stiefel-Whitney class of $V$.\\

Let $X,Y$ be smooth manifolds and let $f : X \to Y$ be a smooth map. Choose Riemannian metrics on $X$ and $Y$. The bundle $TX \oplus f^*(TY)$ has a metric so we may take the lifting gerbe $L(TX \oplus f^*(TY))$ which we also denote by $L(f)$. If we choose different metrics on $X$ and $Y$ we get isomorphic lifting gerbes. Indeed choosing a path of metrics joining the given metrics we can identify the frame bundles by parallel translation and this gives an isomorphism. Two different paths are homotopic and this induces a homotopy between the two isomorphisms.

Let $\alpha \in \mathcal{TW}_Y$ be a graded gerbe on $Y$. The push-forward is a map
\begin{equation*}
f_* : K^*(X , L(f) \otimes f^*(\alpha)) \to K^{*-d}(Y,\alpha)
\end{equation*}
where $d = {\rm dim}(X) - {\rm dim}(Y)$. Note that the class of $L(f)$ is trivial if and only if $TX \oplus f^*(TY)$ admits a ${\rm Spin}^c$-structure which is the condition one usually requires to define a push-forward in untwisted $K$-theory.

The push-forward respects compositions and commutes with pull-backs in the obvious sense.\\

The case where $f : X \to Y$ is a fibre bundle deserves special attention. Let $V = {\rm Ker}(f_*)$ be the vertical bundle. Choose metrics on $Y$ and $V$ and choose an isomorphism $TX = V \oplus f^*(TY)$. This determines a corresponding metric on $X$. As bundles with metrics we have $TX \oplus f^*(TY) = V \oplus f^*(TY \oplus TY)$. We show there is a canonical isomorphism between the lifting gerbe for $TX \oplus f^*(TY) = V \oplus f^*(TY \oplus TY)$ and the lifting gerbe for $V$. For this it suffices to show that for any rank $k$ vector bundle $E$ with metric there is a canonical ${\rm Pin}^c(2k)$-structure on $E \oplus E$. Recall that the map ${\rm U}(k) \to {\rm SO}(2k)$ admits a lift ${\rm U}(k) \to {\rm Spin}^c(2k)$. We then have a commutative diagram
\begin{equation*}\xymatrix{
{\rm O}(k) \ar[r] \ar[dd] \ar[drr] & {\rm U}(k) \ar[r] & {\rm Spin}^c(2k) \ar[d] \\
& & {\rm Pin}^c(2k) \ar[d] \\
{\rm O}(k) \times {\rm O}(k) \ar[rr] & & {\rm O}(2k)
}
\end{equation*}
which gives the desired ${\rm Pin}^c(2k)$-structure. Let $L(V)$ be the lifting gerbe for $V$ and let $\alpha \in \mathcal{TW}_Y$ be any graded gerbe on $Y$. It follows that there is a canonical isomorphism $K^*(X , L(f) \otimes f^*(\alpha)) \simeq K^*(X,L(V)\otimes f^*(\alpha))$. We then have the following alternative form of the push-forward for fibre bundles:
\begin{equation*}
f_* : K^*(X , L(V) \otimes f^*(\alpha)) \to K^{*-d}(Y,\alpha).
\end{equation*}
This form of the push-forward also respects composition and commutes with pull-backs.\\

Finally another special case of the push-forward occurs for open embeddings $i : U \to X$. Suppose $\alpha$ is a gerbe on $X$ and by restriction defines a gerbe $\alpha|_U$ on $U$. The push-forward $i_* : K^*(U,\alpha|_U) \to K^*(X,\alpha)$ amounts to taking a homotopy class of section supported on $U$ and extending it trivially to $X$.\\

{\bf Mayer-Vietoris}: if $U_1,U_2$ are open sets covering $X$ and $\alpha \in \mathcal{TW}_X$ is a twist then we have a natural long exact sequence
\begin{equation*}\xymatrix{
K^0(X,\alpha) \ar[d] & K^0(U_1,\alpha) \oplus K^0(U_2,\alpha) \ar[l] & K^0(U_1 \cap U_2 , \alpha) \ar[l] \\
K^1(U_1 \cap U_2,\alpha) \ar[r] & K^1(U_1,\alpha) \oplus K^1(U_2,\alpha) \ar[r] & K^1(X,\alpha) \ar[u]
}
\end{equation*}
where $\alpha$ defines twists on $U_1,U_2, U_1 \cap U_2$ by restriction. Moreover the Mayer-vietoris sequence is natural is the sense that pull-back, push-forward and isomorphism of twists yield commuting Mayer-Vietoris sequences.


\subsection{T-duality of twisted $K$-theory}\label{tdtk}
Now we consider the behavior of twisted $K$-theory under T-duality of circle bundles. Let $(E,h),(\hat{E},\hat{h})$ be T-dual pairs over a base $M$. We assume that $E,\hat{E},M$ are smooth manifolds since we want to use the push-forward in twisted $K$-theory. It may be possible to define the push-forward in a more general context but we will not pursue this. Represent $h,\hat{h}$ by twists $\alpha_h \in \mathcal{TW}_E$, $\alpha_{\hat{h}} \in \mathcal{TW}_{\hat{E}}$ such that $h,\hat{h}$ are the graded Dixmier-Douady classes of $\alpha_h,\alpha_{\hat{h}}$. As usual let $F = E \times_M \hat{E}$ be the correspondence space with projections $p,\hat{p}$. Since $p^*h = \hat{p}^* \hat{h}$ it follows that there is an isomorphism $\eta : p^* (\alpha_h) \to \hat{p}^* (\alpha_{\hat{h}})$ inducing an isomorphism
\begin{equation*}
u : K^*(F,p^*\alpha_h) \to K^*(F,\hat{p}^*\alpha_{\hat{h}})
\end{equation*}
which generally depends on the choice of isomorphism $\eta$. Let $\xi \in H^1(M,\mathbb{Z}_2)$ denote the Stiefel-Whitney class of $E,\hat{E}$. Let $\gamma = L(V) \in \mathcal{TW}_M$ be the lifting gerbe for the vertical bundle of $E$ (and also $\hat{E}$). Note that the Dixmier-Douady class of $\gamma$ is $(\xi,0) \in |\mathcal{TW}_M|$. Then $\beta_h = \gamma \otimes \alpha_h$ represents $(\xi,h)$ and $\beta_{\hat{h}} = \gamma \otimes \alpha_{\hat{h}}$ represents $(\xi,\hat{h})$. We then have an isomorphism $1 \otimes \eta : \beta_h \to \beta_{\hat{h}}$ so there is an associated isomorphism
\begin{equation*}
u_\xi : K^*(F,p^* \beta_h) \to K^*(F,\hat{p}^* \beta_{\hat{h}}).
\end{equation*}
Following the example of twisted cohomology we will define a T-duality transformation
\begin{equation*}
T_\xi : K^*(E,\beta_h) \to K^{*-1}(\hat{E}, \alpha_{\hat{h}})
\end{equation*}
by the composition
\begin{equation*}
T_\xi = \hat{p}_* \circ u_\xi \circ p^*.
\end{equation*}
Expressed as a diagram the map $T_\xi$ is the composition
\begin{equation*}\xymatrix{
K^*(E,\beta_h) \ar[r]^{p^*} & K^*(F,p^*\beta_h) \ar[r]^{u_\xi} & K^*(F,\hat{p}^* \beta_{\hat{h}}) \ar[r]^{\hat{p}_*} & K^{*-1}(\hat{E} , \alpha_{\hat{h}}).
}
\end{equation*}
Note that this is well defined since the push-forward $\hat{p}_*$ has the form
\begin{equation*}
\hat{p}_* : K^*(F,\hat{p}^* \beta_{\hat{h}}) = K^*(F,\hat{p}^* \gamma \otimes \hat{p}^* \alpha_{\hat{h}}) \to K^{*-1}(\hat{E} , \alpha_{\hat{h}})
\end{equation*}
since $\hat{p}^* \gamma$ is the lifting gerbe for the vertical bundle of $F \to \hat{E}$.\\

We can similarly define a $T$-duality map $T : K^*(E,\alpha_h) \to K^{*-1}(\hat{E},\beta_{\hat{h}})$ by the following composition
\begin{equation*}\xymatrix{
K^*(E,\alpha_h) \ar[r]^{p^*} & K^*(F,p^*\alpha_h) \ar[r]^{u} & K^*(F,\hat{p}^* \alpha_{\hat{h}}) \ar[r]^{\hat{p}_*} & K^{*-1}(\hat{E} , \beta_{\hat{h}}).
}
\end{equation*}
Note that in order to define the push-forward here we are identifying $\hat{p}^* \gamma \otimes \hat{p}^* \beta_{\hat{h}}$ with $\hat{p}^* \alpha_{\hat{h}}$ by means of a trivialization of $\gamma \otimes \gamma$. Since $\gamma$ is a lifting gerbe we have seen that there is a trivialization of $\gamma \otimes \gamma$ which at the level of $K$-theory gives a canonical isomorphism $K^*(F , \hat{p}^* \gamma \otimes \hat{p}^* \beta_{\hat{h}}) \to K^*(F , \hat{p}^* \alpha_{\hat{h}})$.\\

At the level of isomorphism classes we can view $T,T_\xi$ as maps
\begin{eqnarray*}
&T& : K^*(E,h) \to K^{*-1}(\hat{E},(\xi,\hat{h})) \\
&T_\xi& : K^*(E, (\xi,h)) \to K^{*-1}(\hat{E}, \hat{h})
\end{eqnarray*}
though $T,T_\xi$ viewed in this way are not natural transformations due to the choices involved.\\

In order for the T-duality transformations $T,T_\xi$ to be isomorphisms we need to choose the isomorphism $\eta : p^* (\alpha_h) \to \hat{p}^* (\alpha_{\hat{h}})$ to have a specific property \cite{brs}. For each point $b \in M$ consider the fibres $T_b = \pi^{-1}(b)$ and $\hat{T}_b = \hat{\pi}^{-1}(b)$ of $E$ and $\hat{E}$ over $b$. The fibre of the correspondence space $F$ over $b$ is then $T_b \times \hat{T}_b$. Since the fibres $T_b,\hat{T}_b$ are one dimensional we have that the restrictions of $h,\hat{h}$ to the fibres are trivial. Thus $\alpha_h$ and $\alpha_{\hat{h}}$ restricted to $T_b$ and $\hat{T}_b$ respectively admit trivializations $\tau : 0 \to \alpha_h|_{T_b}$, $\hat{\tau} : 0 \to \alpha_{\hat{h}}|_{\hat{T}_b}$. Now comparing the isomorphisms $\eta |_{T_b \times \hat{T}_b}$ and $\hat{p}^* \hat{\tau} \circ ( p^*{\tau} )^{-1}$ we have they differ by an automorphism of $p^*(\alpha_h)|_{T_b \times \hat{T}_b }$ and this projects to a class $\mu \in H^2( T_b \times \hat{T}_b , \mathbb{Z})$. If we change the trivializations $\tau,\hat{\tau}$ then the class $\mu$ changes by elements in the subspaces $H^2(T_b,\mathbb{Z}),H^2(\hat{T}_b,\mathbb{Z})$. In our case $T_b,\hat{T}_b$ are $1$-dimensional so in fact the class $\mu \in H^2(T_b \times \hat{T}_b , \mathbb{Z}) \simeq \mathbb{Z}$ is well defined. Note that since $E,\hat{E}$ have the same Stiefel-Whitney class the bundle $F \to M$ is oriented and a choice of orientation fixes the isomorphism $H^2(T_b \times \hat{T}_b , \mathbb{Z}) \simeq \mathbb{Z}$. 

The condition we require on $\eta$ is that for all $b \in M$ the class $\mu = \mu(\eta)$ under the above identification equals $1$. This condition ensures that the transformations $T,T_\xi$ restricted to individual fibres recovers the $K$-theory equivalent of the Fourier-Mukai transform. The class $1 \in \mathbb{Z} \simeq H^2(T_b \times \hat{T}_b , \mathbb{Z})$ is the Chern class of the Poincar\'e line bundle. An isomorphism $\eta$ with this property will be said to have the {\em Poincar\'e property}.\\

It is always possible to find an isomorphism $\eta$ with the Poincar\'e property. Our proof makes use of gerbes with connection and curving so we defer the proof until Section \ref{tcctd}, where it is proved in Proposition \ref{poinp}.

\begin{prop}\label{tki}
Let $M$ be a smooth manifold admitting a finite good cover. For $T$-dual pairs $(E,h)$, $(\hat{E},\hat{h})$ over $M$ we have isomorphisms
\begin{eqnarray*}
K^{*}(E,h) &\simeq & K^{*-1}(\hat{E},(\xi,\hat{h})) \\
K^{*}(E,(\xi,h)) &\simeq & K^{*-1}(\hat{E},\hat{h}).
\end{eqnarray*}
The T-duality transformations $T,T_\xi$ are isomorphisms provided we choose an isomorphism $\eta : p^* (\alpha_h) \to \hat{p}^* (\alpha_{\hat{h}})$ with the Poincar\'e property, as explained above.

\begin{proof}
Following \cite{bunksch} we will prove the result in the case that the base has a finite good cover using the Mayer-Vietoris sequence for twisted $K$-theory.\\

First note that for any open subset $U$ of $M$, we may define the restrictions $E_U = \pi^{-1}(U)$, $\hat{E}_U = \hat{\pi}^{-1}(U)$ of $E,\hat{E}$ and if $\alpha_h$, $\alpha_{\hat{h}}$ are twists representing $h,\hat{h}$ we can restrict $\alpha_h,\alpha_{\hat{h}}$ to $E_U,\hat{E}_U$. The morphism $u : p^*(\alpha_h) \to \hat{p}^*(\alpha_{\hat{h}})$ restricts to a morphism $u_U : p^*(\alpha_h|_U) \to \hat{p}^*(\alpha_{\hat{h}}|_U)$ over $p^{-1} \pi^{-1}(U) = \hat{p}^{-1}\hat{\pi}^{-1}(U)$. It follows that the T-duality maps $T,T_\xi$ can be restricted to open subsets and we get commutative squares
\begin{equation*}\xymatrix{
K^*(E,(\xi,h)) \ar[r]^-{T_\xi}  & K^{*-1}(\hat{E},\hat{h}) \\
K^*(E_U,(\xi|_U,h|_U)) \ar[r]^-{T_\xi} \ar[u]^{i_*} & K^{*-1}(\hat{E}_U,\hat{h}|_U) \ar[u]^{i_*}
}
\end{equation*}
and similarly for $T$.\\

Now suppose we have a decomposition $M = U \cup V$ of $M$ into two open subsets. Taking the open cover $E_U = \pi^{-1}(U)$, $E_V = \pi^{-1}(U)$ of $E$ and a similar open cover $\hat{E}_U,\hat{E}_V$ for $\hat{E}$ we get two associated Mayer-Vietoris sequences. Naturality of the Mayer-Vietoris sequence ensures that the T-duality map is a chain map between the two exact sequences. If the T-duality map restricted to $U$,$V$ and $U \cap V$ are isomorphisms then by the $5$-lemma the T-duality map itself is an isomorphism.\\

Now we prove that the T-duality maps are isomorphisms whenever the base has a finite good cover. Say a space has a good cover of order $n$ if it has a good cover with $n$ elements. We prove the result by induction on $n$.

When $n=1$ we have that $M$ is contractible and the result is trivial to verify. In fact $E,\hat{E}$ must be oriented circle bundles so the result already follows from \cite{bunksch}. Note it is here that we require the property $\mu(\eta) = 1$.\\

If the space $M$ has a good cover $U_1 , U_2 , \dots , U_n , U_{n+1}$ then let $U = U_1 \cup U_2 \cup \dots \cup U_n$, $V = U_{n+1}$. Clearly $U$ has a good cover of order $n$ and $V$ a good cover of order $1$, so by induction the $T$-duality map is an isomorphism on $U$ and $V$. By the Mayer-Vietoris sequence the $T$-duality map is an isomorphism on $M$ too.
\end{proof}
\end{prop}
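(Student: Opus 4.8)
The plan is to prove the statement by induction on the size of a finite good cover of $M$, using the Mayer--Vietoris sequence for twisted $K$-theory together with the five lemma, following the strategy of \cite{bunksch}. The first thing I would establish is that the T-duality maps $T,T_\xi$ are compatible with restriction to open subsets. For any open $U \subseteq M$ I restrict $E,\hat{E}$ to $E_U = \pi^{-1}(U)$ and $\hat{E}_U = \hat{\pi}^{-1}(U)$, restrict the twists $\alpha_h,\alpha_{\hat h}$ and the chosen isomorphism $\eta$, and note that the correspondence space of the restricted bundles is $p^{-1}\pi^{-1}(U) = \hat{p}^{-1}\hat{\pi}^{-1}(U)$. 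Since pull-back, push-forward, and the isomorphism $u$ (resp. $u_\xi$) all commute with the open-inclusion push-forwards $i_*$, I obtain commuting squares relating $T$ (resp. $T_\xi$) on $M$ with its counterpart on $U$.

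Next, for a decomposition $M = U \cup V$ into two open sets I would write down the two Mayer--Vietoris sequences, one for the cover $\{E_U,E_V\}$ of $E$ and one for the cover $\{\hat{E}_U,\hat{E}_V\}$ of $\hat{E}$, twisted by the relevant classes. Naturality of Mayer--Vietoris with respect to pull-back, push-forward and isomorphism of twists guarantees that $T$ (and likewise $T_\xi$) is a chain map between these two long exact sequences. Hence, if the T-duality map is an isomorphism over $U$, over $V$ and over $U \cap V$, the five lemma forces it to be an isomorphism over $M$.

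The base of the induction is the case where $M$ has a good cover of order one, i.e. $M$ is contractible. Over a contractible base every circle bundle is trivial, so in particular $E,\hat{E}$ are oriented principal circle bundles and the statement reduces to the known T-duality isomorphism for principal circle bundles from \cite{bunksch}. It is precisely here that the Poincar\'e property $\mu(\eta)=1$ of the chosen isomorphism $\eta$ (guaranteed by Proposition \ref{poinp}) is needed: it ensures that the transform restricted to each fibre $T_b \times \hat{T}_b$ is the $K$-theoretic Fourier--Mukai transform associated with the Poincar\'e line bundle, which is the input making the base case an isomorphism.

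For the inductive step, given a good cover $U_1,\dots,U_{n+1}$ I set $U = U_1 \cup \dots \cup U_n$ and $V = U_{n+1}$. Then $U$ carries a good cover of order $n$ and $V$ one of order one, while $U \cap V = \bigcup_{i \le n}(U_i \cap U_{n+1})$ has a good cover of order at most $n$; by the inductive hypothesis the T-duality maps are isomorphisms over each of $U,V,U \cap V$, so the Mayer--Vietoris argument above makes them isomorphisms over $M$. The main obstacle is not the induction itself but the bookkeeping that makes it run: one must verify that all the structure (the twists $\beta_h,\beta_{\hat h}$, the identifications coming from the lifting gerbe $\gamma$ and the trivialization of $\gamma \otimes \gamma$, and the isomorphism $\eta$) genuinely restricts compatibly so the Mayer--Vietoris squares commute, and one must secure the base case, which is exactly where the Poincar\'e normalization enters in an essential way.
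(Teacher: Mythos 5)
Your proposal is correct and follows essentially the same route as the paper's own proof: compatibility of $T,T_\xi$ with restriction to open sets, the Mayer--Vietoris sequence plus the five lemma, and induction on the order of a finite good cover with the contractible base case reducing to the oriented (Bunke--Schick) situation, where the Poincar\'e normalization $\mu(\eta)=1$ is used. In fact you are slightly more careful than the paper at one point: in the inductive step you explicitly note that $U\cap V=\bigcup_{i\le n}(U_i\cap U_{n+1})$ inherits a good cover of order at most $n$, so the inductive hypothesis applies to it as well, a detail the paper's write-up leaves implicit.
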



\section{The twisted Chern character}

We have shown that one can define T-duality maps which are isomorphisms between twisted cohomology or twisted $K$-theory on T-dual circle bundles. Now we would like to refine this result to a statement that T-duality commutes with the Chern character in the sense that we have a commutative diagram of the form
\begin{equation*}\xymatrix{
K^*(E,h) \ar[r]^-{T} \ar[d]^{Ch} & K^{*-1}(\hat{E},(\xi,\hat{h})) \ar[d]^{Ch} \\
H^*(E,H) \ar[r]^-{T} & H^{*-1}(E,(\xi,\hat{H}))
}
\end{equation*}
and similarly with $(E,h),(\hat{E},\hat{h})$ interchanged. We will see that this is indeed the case although it requires some care. The main issue is that to define the twisted $K$-theory we need a graded gerbe, while to define the twisted cohomology we need a $3$-form representing the class of the gerbe.\\

To obtain a commutative diagram as above we need to unify these objects using gerbes with connection and curving. A related problem is that the T-duality maps (denoted by $T_\xi$ in the diagram) depend on certain choices. A morphism at the level of gerbes with connection and curving will simultaneously make choices for the T-duality transformations on $K$-theory and twisted cohomology which commute with the Chern character. 


\subsection{Gerbe connections}

Gerbe connections and curving for graded gerbes are identical to the ungraded case as can be found in \cite{mm}, \cite{stev}, \cite{bcmms}. In this section we review relevant details.\\

In order to define connections on gerbes we work in the category of smooth manifolds. Thus a pair $(Y,\pi)$ over a smooth manifold $X$ is a smooth manifold $Y$ and surjective submersion $\pi : Y \to X$. Let $Y^{[k]}$ denote the $k$-fold fibre product of $Y$ with itself. We get natural projections $\pi_i : Y^{[k+1]} \to Y^{[k]}$ for $i = 0, \dots , k$ which omit the $(i+1)$-th factor. Define $\delta : \Omega^*(Y^{[k]}) \to \Omega^*(Y^{[k+1]})$ to be
\begin{equation*}
\delta = \sum_{i=0}^k (-1)^i \pi_i^*.
\end{equation*}
For any $p$ the sequence
\begin{equation*}\xymatrix{
0 \ar[r] & \Omega^p(X) \ar[r]^{\pi^*} & \Omega^p(Y^{[1]}) \ar[r]^\delta & \Omega^p(Y^{[2]}) \ar[r]^\delta & \dots
}
\end{equation*}
is exact \cite{mm}.\\

 Let $(Y,\pi,L,p)$ be a gerbe on $X$, so in particular $p : L \to Y^{[2]}$ is a line bundle. Associated to $L$ is a principal ${\rm U}(1)$-bundle $Q \to Y^{[2]}$. A {\em gerbe connection} is a Hermitian connection $\theta \in \Omega^1(Q) \otimes \mathbb{C}$ for $Q \to Y^{[2]}$ which preserves the gerbe multiplication. Since $\theta$ repsects the multiplication one finds that the curvature of the connection $F \in \Omega^2(Y^{[2]}) \otimes \mathbb{C}$ satisfies $\delta (F) = 0$. Since $\theta$ is Hermitian we have that $F/2\pi i$ is real, so one can find a $2$-form $\omega \in \Omega^2(Y^{[1]}) \otimes \mathbb{C}$ such that $F = \delta(\omega)$ and such that $\omega/2 \pi i$ is real. Such a choice of $\omega$ is called a {\em curving} for the gerbe connection. We have that $\delta (d\omega) = d (\delta \omega) = d F = 0$ so that there exists a unique $3$-form $H \in \Omega^3(X)$ such that $\pi^*H = d \omega / (2 \pi i)$ called the {\em curvature} of the curving. Clearly $H$ is a representative in de Rham cohomology for the image of the Dixmier-Douady class of the gerbe in real cohomology. Note that every gerbe admits a connection and curving.\\

Fix a pair $(Y,\pi)$ over $X$. We will make a $2$-category out of gerbes associated to $(Y,\pi)$ with connection and curving. Let $(Y,\pi,L,p)$, $(Y,\pi,L',p')$ be gerbes associated to $(Y,\pi)$ we sometimes use just $L,L'$ to refer to the gerbes. Next suppose $\theta , \omega$ and $\theta',\omega'$ are connections and curvings for $G$ and $G'$ respectively. A $1$-morphism $(T,\tau,\nabla,B)$ consists of:
\begin{itemize}
\item{a Hermitian line bundle $T \to Y^{[1]}$}
\item{a connection $\nabla$ on $T$}
\item{a $2$-form $B \in \Omega^2(X)$}
\item{a strict isomorphism of gerbes $\tau : L \otimes \delta(T) \to L'$}
\end{itemize}
satisfying some conditions that we now explain. Note that the trivializable gerbe $\delta(T)$ inherits a natural connection and curving. The connection comes from $\nabla$ and the curving $F$ is the curvature of $T$. Then we require that $\tau$ identifies the product connection on $L \otimes \delta(T)$ with the connection on $L'$ and that the curvings $\omega,\omega'$ of $L,L'$ are related by
\begin{equation}\label{curvingshift}
\omega' = \omega + F + 2 \pi i B.
\end{equation}

A $2$-morphism $(\sigma,u)$ between $1$-morphisms $(T,\tau,\nabla,B),(T',\tau',\nabla',B')$ consists of an isomorphism $\sigma : T \to T'$ of line bundles inducing a strict isomorphism $\delta(\sigma) : \delta(T) \to \delta(T')$ and a $1$-form $u \in \Omega^1(X)$ satisfying some conditions which we explain. There must be a commutative diagram of strict isomorphisms:
\begin{equation*}\xymatrix{
L \otimes \delta(T) \ar[r]^\tau \ar[d]_{1 \otimes \delta(\sigma)} & L' \\
L \otimes \delta(T') \ar[ur]^{\tau'} & 
}
\end{equation*}
Moreover the connections $\nabla,\nabla'$ on $T,T'$ are related as follows:
\begin{equation*}
(\sigma^{-1})^* \nabla = \nabla' + 2 \pi i u
\end{equation*}
and the $2$-forms $B,B'$ are related by
\begin{equation*}
B' = B - du.
\end{equation*}

We can repeat all of the above definitions for graded gerbes. If two gerbes $L,L'$ are isomorphic and we give $L,L'$ any choice of connections $\theta,\theta'$ and curvings $\omega,\omega'$ there exists an isomorphism between $(L,\theta,\omega)$ and $(L',\theta',\omega')$. The group of of automorphisms of a gerbe with connection and curving modulo $2$-morphisms is the group of isomorphism classes of line bundles.\\

We call a $1$-morphism determined by a $2$-form $B \in \Omega^2(X)$ a {\em $B$-shift}. There is a more restricted notion of $1$-morphism in which we set the $2$-form $B = 0$. Obviously every morphism factors into a $B$-shift and a morphism with $B=0$.\\

So far we have made a $2$-category out of gerbes with connection and curving which are associated to a fixed pair $(Y,\pi)$. There is an evident notion of pull-back under diagrams of the form
\begin{equation*}\xymatrix{
Y' \ar[rr]^f \ar[dr]_{\pi'} & & Y \ar[dl]^{\pi} \\
& X &
}
\end{equation*}
or even a pull-back under base maps $f : X \to X'$. Using this we can speak of isomorphism between gerbes with connection and curving defined with respect to different pairs.


\subsection{Twisted Chern character}\label{tcc}

Given a graded gerbe $\mathcal{G}$ on $X$ with connection and curving one may define a twisted Chern character
\begin{equation*}
Ch_{\mathcal{G}} : K^{*}(X,\mathcal{G}) \to H^{*}(M,(\xi,H))
\end{equation*}
where $H$ is the curvature of $\mathcal{G}$ and $\xi$ is the $H^1(X,\mathbb{Z}_2)$ part of the Diximier-Douady class of $\mathcal{G}$. For simplicity we assume that $X$ is compact although it should be possible to define the Chern character more generally using compactly supported cohomology. We have used $Ch_{\mathcal{G}}$ to denote the twisted Chern character associated to the gerbe with connection and curving $\mathcal{G}$. This notation obscures the fact that $Ch_{\mathcal{G}}$ depends on the connection and curving however.\\

The twisted Chern character in the case of $K$-theory twisted by classes in $H^3(X,\mathbb{Z})$, that is by ungraded gerbes is defined in \cite{bcmms}, \cite{ms}. The extension to the more general class of twists is quite straightforward. Indeed, a class $\xi \in H^1(X,\mathbb{Z}_2)$ defines a double cover $m : X_2 \to X$ with an involution $\sigma : X_2 \to X_2$ which permutes the elements in each fibre. Associated to $\xi$ is a flat line bundle $L$ and associated to $L$ is a lifting gerbe $\gamma$ which has Diximier-Douady class $(\xi,0)$. Any graded gerbe $\mathcal{G}$ with Diximier-Douady class $(\xi,h)$ can be canonically factored into a product $\mathcal{G} = \gamma \otimes \mathcal{G}_0$ where $\mathcal{G}_0$ is an ungraded gerbe with Diximier-Douady class $h$. We then have an exact $6$-term sequence \cite{fht2}
\begin{equation*}\xymatrix{
K^0(X,\mathcal{G}) \ar[r]^-{m^*} & K^0(X_2, m^* \mathcal{G}_0 ) \ar[r] & K^0(X , \mathcal{G}_0) \ar[d] \\
K^1(X,\mathcal{G}_0) \ar[u] & K^1(X_2 , m^* \mathcal{G}_0 ) \ar[l] & K^1(X,\mathcal{G}) \ar[l]_-{m^*}
}
\end{equation*}
Now suppose we give $\mathcal{G}_0$ a connection and curving with curvature $H$. As an ungraded gerbe $\gamma$ is canoncially trivial so we can give it the trivial connection and curving. We then give $\mathcal{G}$ the product connection and curving which also has curvature $H$ and $m^* \mathcal{G}_0$ the pull-back connection and curving with curvature $m^* H$. 

There is a similar long exact sequence in twisted cohomology which is split:
\begin{equation*}\xymatrix{
H^0(X,(\xi,H)) \ar[r]^-{m^*} & H^0(X_2,m^*H) \ar[r] & H^0(X,H) \ar[d]^{0} \\
H^1(X,H) \ar[u]_{0} & H^1(X_2,m^*H) \ar[l] & H^1(X,(\xi,H)) \ar[l]_-{m^*}
}
\end{equation*}
Given this there is a unique way to define $Ch_{\mathcal{G}} : K^*(X,\mathcal{G}) \to H^*(X,(\xi,H))$ so that we have a commutative diagram
\begin{equation*}\xymatrix{
\dots \ar[r] & K^{*-1}(X,\mathcal{G}_0) \ar[r] \ar[d]^{Ch_{\mathcal{G}_0}} & K^*(X,\mathcal{G}) \ar[r]^{m^*} \ar[d]^{Ch_{\mathcal{G}}} & K^*(X_2, m^* \mathcal{G}_0 ) \ar[r] \ar[d]^{Ch_{m^*\mathcal{G}}} & \dots \\
\dots \ar[r] & H^{*-1}(X,H) \ar[r]^{0} & H^*(X,(\xi,H)) \ar[r]^{m^*} & H^*(X_2,m^*H) \ar[r] & \dots
}
\end{equation*}

We remark that for any compact smooth manifold $X$ the twisted Chern character $Ch_{\mathcal{G}} : K^*(X,\mathcal{G})\otimes \mathbb{R} \to H^*(X,(\xi,H))$ tensored by $\mathbb{R}$ is an isomorphism \cite{fht2}. The twisted Chern character respects pull-backs.\\

Let $\mathcal{G},\mathcal{G}'$ be graded gerbes with connection and curving associated to a pair $(Y,\pi)$ over $X$. Suppose there is an isomorphism $u :\mathcal{G} \to \mathcal{G}'$ which involves a $B$-shift by $B \in \Omega^2(X)$. Let $\xi$ be the $H^1(X,\mathbb{Z}_2)$-part of the Diximier-Douady class of $\mathcal{G}$ and $\mathcal{G'}$ and let $H,H'$ be the curvature $3$-forms. Clearly $H' = H + dB$. The twisted Chern characters of $\mathcal{G},\mathcal{G}'$ are related through the following commutative diagram:
\begin{equation*}\xymatrix{
K^*(X,\mathcal{G}) \ar[r]^u \ar[d]^{Ch_\mathcal{G}} & K^*(X,\mathcal{G'} \ar[d]^{Ch_\mathcal{G'}}) \\
H^*(X,(\xi,H)) \ar[r]^{e^{-B}} & H^*(X,(\xi,H'))
}
\end{equation*}

Let $f : X \to Y$ a smooth map, $\alpha$ an ungraded gerbe on $Y$ and $L(f)$ the lifting gerbe for $f$. Give $\alpha$ a connection and curving with curvature $H \in \Omega^3(Y)$. The ${\rm U}(1)$-bundle defining the lifting gerbe (with respect to some choice of metrics on $X,Y$) is associated to a $\mathbb{Z}_2$-bundle and so $L(f)$ can be equipped with a canonical flat gerbe connection and zero curving. Of fundamental importance is the following version of the Riemann-Roch formula for twisted $K$-theory \cite{cw}, \cite{cmw}:

\begin{prop}[Riemann-Roch formula for bundle gerbes]
Suppose $X,Y$ are oriented, $f : X \to Y$ a map and $\alpha$ is ungraded. If $a \in K^*(X,L(f)\otimes f^*\alpha)$ then
\begin{equation*}
Ch_{\alpha}( f_* a ) \hat{A}(Y) = f_* ( Ch_{L(f)\otimes f^*\alpha} (a) \hat{A}(X) ).
\end{equation*}
\end{prop}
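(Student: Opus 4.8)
The plan is to reduce the identity to two standard cases by factoring $f$, and then to invoke the compatibility of the twisted Chern character with the (twisted) Thom isomorphism. Using the Whitney embedding theorem, choose a closed embedding $j : X \to \mathbb{R}^{2N}$ for $N$ large and factor $f$ as the composition
\[
X \xrightarrow{\ i\ } Z := Y \times \mathbb{R}^{2N} \xrightarrow{\ p\ } Y,
\]
where $i = (f,j)$ is a closed embedding and $p$ is the projection. The push-forward in twisted $K$-theory respects composition, the twisted Chern character respects pull-backs, and $\hat{A}$ is multiplicative, so that $\hat{A}(X) = i^* \hat{A}(TZ) \cdot \hat{A}(N_i)^{-1}$ for the normal bundle $N_i$ of $i$, with $i^*\hat{A}(TZ) = f^*\hat{A}(Y)$ since $\mathbb{R}^{2N}$ is trivial. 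It therefore suffices to verify the Riemann--Roch identity separately for $i$ and for $p$. Throughout one works with compactly supported sections, so the non-compactness of $Z$ causes no difficulty.

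For the projection $p : Z \to Y$ the fibre is a trivial even-dimensional vector bundle, so the lifting gerbe $L(p)$ is canonically trivial and $p_*$ is the Thom isomorphism of a trivial bundle, i.e.\ iterated Bott periodicity. Here $\hat{A}$ of the fibre equals $1$, and the assertion reduces to the compatibility of the $K$-theoretic and cohomological Thom isomorphisms for a trivial bundle, which holds because the twisted Chern character respects pull-backs and the external product with the Bott class.

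For the closed embedding $i : X \to Z$, a tubular neighborhood and excision identify $i_*$ with the twisted Thom push-forward associated to $N_i$. Since $i^*TZ \cong TX \oplus N_i$ we have $TX \oplus i^*TZ \cong N_i \oplus (TX \oplus TX)$, and the canonical ${\rm Spin}^c$-structure on $TX \oplus TX$ furnished by the lift ${\rm U}(k) \to {\rm Spin}^c(2k)$ constructed above yields a canonical isomorphism $L(i) \cong L(N_i)$. The identity for $i$ then amounts to computing the twisted Chern character of the twisted Thom class of $N_i$: one shows that the Thom isomorphism intertwines $Ch_{L(N_i)\otimes i^*\beta}$ with $Ch_\beta$ up to multiplication by $\hat{A}(N_i)^{-1}$, which after using $i^*\hat{A}(TZ) = \hat{A}(X)\,\hat{A}(N_i)$ produces exactly the correction by $\hat{A}(X)/f^*\hat{A}(Y)$. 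Composing with the projection case gives the stated formula for $f$.

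The main obstacle is this last computation. One must track the gerbe connection and curving through the Thom isomorphism so that both sides of the formula land in twisted cohomology with matching curvature $3$-forms, and then compute the twisted Chern character of the Thom class of the lifting gerbe $L(N_i)$; this is a local \v{C}ech--de Rham calculation, carried out in \cite{cw}, \cite{cmw}. Here the hypotheses simplify matters: since $X,Y$ are oriented and $\alpha$ is ungraded, the $H^1(X,\mathbb{Z}_2)$-component $w_1(TX \oplus f^*TY)$ of the class of $L(f)\otimes f^*\alpha$ vanishes, so the twist is ungraded and the cited ungraded twisted Riemann--Roch applies directly. Finally, a $B$-shift argument, using the commutativity of $Ch$ with $B$-shifts established above, reconciles the various choices of connection and curving made along the way.
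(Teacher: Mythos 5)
Your outline is the standard embed-and-project proof of twisted Riemann--Roch, and its scaffolding is sound: the factorization $X \to Y \times \mathbb{R}^{2N} \to Y$, the identification $L(i) \simeq L(N_i)$ via the canonical ${\rm Spin}^c$-structure on $TX \oplus TX$, the absence of any correction term in the projection case, and the bookkeeping $\hat{A}(X) = i^*\hat{A}(TZ)\,\hat{A}(N_i)^{-1}$, $i^*\hat{A}(TZ) = f^*\hat{A}(Y)$ are all correct. However, you should be aware that the paper does not prove this proposition at all: it imports it verbatim from \cite{cw}, \cite{cmw}, and reserves its own argument for the subsequent generalization to graded gerbes and non-oriented $X,Y$ (which it proves by reduction to the present statement using orientation double covers and $m_*m^* = 2$). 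Your proposal is therefore not so much a different route as a reconstruction of the proof that lives inside the cited references --- and indeed at the decisive step, the computation of the twisted Chern character of the Thom class of $L(N_i)$ with its $\hat{A}(N_i)^{-1}$ correction, you yourself defer to \cite{cw}, \cite{cmw}. Your closing remark that ``the cited ungraded twisted Riemann--Roch applies directly'' is essentially circular, since that \emph{is} the proposition being proved; what you mean, and what would be accurate, is that the Thom-class calculation carried out in those references supplies the one ingredient your reduction does not establish. So: as an independent proof the proposal has a gap exactly at its self-declared ``main obstacle,'' but as an account of where the result comes from it is faithful to the literature, and it rests on precisely the same external input as the paper itself.
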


Note that in order to make sense of this equation one must first define a push-forward operation for compactly supported cohomology twisted by $3$-forms and flat line bundles. We omit the details since the construction parallels the construction of the push-forward in twisted $K$-theory as in \cite{cw0}.\\

It takes only a little work to remove the hypotheses that $X,Y$ are oriented and $\alpha$ ungraded:
\begin{prop}[Riemann-Roch formula for graded bundle gerbes] For any $X,Y$, map $f: X \to Y$ and graded gerbe $\alpha$ we have for $a \in K^*(X,L(f)\otimes f^*\alpha)$:
\begin{equation*}
Ch_{\alpha}( f_* a ) \hat{A}(Y) = f_* ( Ch_{L(f)\otimes f^*\alpha} (a) \hat{A}(X) ).
\end{equation*}
\begin{proof}
We can prove this result in increasing stages of generality. For example assume $\alpha$ is ungraded, $Y$ is oriented but $X$ is not oriented. Let $m : X_2 \to X$ be the orientation double cover of $X$ and set $f_2 = f \circ m$. The Riemann-Roch formula for $f$ will follow from Riemann-Roch for $f_2$. Indeed take $a \in K^*(X,L(f)\otimes f^*\alpha) \otimes \mathbb{R}$. Then $a = m_*(a')$ where $a' = m^*(a)/2 \in K^*(X_2 , L(f_2) \otimes f_2^* \alpha) \otimes \mathbb{R}$. We first note that we have an equality
\begin{equation*}
m_* ( Ch_{L(f_2)\otimes f_2^* \alpha} (a') ) = Ch_{L(f)\otimes f^*\alpha} (m_* a').
\end{equation*}
Indeed the twisted Chern character commutes with pull-backs so
\begin{equation*}
m^* ( Ch_{L(f)\otimes f^*\alpha} (a) ) = Ch_{L(f_2) \otimes f_2^* \alpha } (m^* a) = 2 \, Ch_{L(f_2) \otimes f_2^* \alpha } (a')
\end{equation*}
then apply $m_*$ to both sides and use $m_* m^* = 2$.\\

Now we find
\begin{eqnarray*}
Ch_{\alpha}( f_* a ) \hat{A}(Y) &=& Ch_{\alpha}( (f_2)_* a' ) \hat{A}(Y) \\
&=& (f_2)_*( Ch_{L(f_2)\otimes f_2^*\alpha} (a') \hat{A}(X_2) ) \\
&=& f_* m_* ( Ch_{L(f_2)\otimes f_2^*\alpha} (a') \hat{A}(X_2) ) \\
&=& f_* ( Ch_{L(f)\otimes f^*\alpha} (a) \hat{A}(X) )
\end{eqnarray*}
where we have used Riemann-Roch for $f_2$.\\

A similar argument can be use to remove the other hypotheses and the result follows.
\end{proof}
\end{prop}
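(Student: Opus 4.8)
The plan is to deduce the general formula from the already-established case (oriented $X,Y$, ungraded $\alpha$) by successively removing each hypothesis, each time passing to a suitable connected double cover on which the offending $\mathbb{Z}_2$-obstruction disappears and then pushing the identity back down. There are three obstructions to clear: non-orientability of $X$, non-orientability of $Y$, and the grading of $\alpha$. Each is governed by a class in $H^1(-,\mathbb{Z}_2)$ and hence by a double cover, and in every case the reduction rests on the same package of identities, which I would establish once and reuse.

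The case of non-oriented $X$ (with $Y$ oriented and $\alpha$ ungraded) is exactly the computation carried out above: one takes the orientation double cover $m:X_2\to X$, writes $a=m_*a'$ with $a'=m^*a/2$, applies Riemann-Roch for $f_2=f\circ m$, and collapses using $m_*m^*=2$ together with the intermediate identity $m_*\big(Ch_{L(f_2)\otimes f_2^*\alpha}(a')\big)=Ch_{L(f)\otimes f^*\alpha}(m_*a')$. I would isolate the three facts that make this work and that recur throughout: $Ch$ commutes with pull-back; the lifting gerbe is natural, $L(f_2)\cong m^*L(f)$, because $TX_2\oplus f_2^*TY_2\cong m^*(TX\oplus f^*TY)$; and $\hat{A}(X_2)=m^*\hat{A}(X)$ since $TX_2=m^*TX$, so that the projection formula may be applied to the $\hat{A}$-factors. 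I would also record the transfer identity $Ch_\alpha(y)=\tfrac12\,n_*Ch_{n^*\alpha}(n^*y)$ for any double cover $n$, which follows at once from $Ch$-pull-back and $n_*n^*=2$.

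To remove orientability of $Y$ (still $\alpha$ ungraded) I would take the orientation cover $n:Y_2\to Y$, form the fibre product $X_2=X\times_Y Y_2$ with covering map $m:X_2\to X$ and lift $f_2:X_2\to Y_2$, and run the argument on the base. The genuinely new ingredient is the base-change identity $n^*f_*=(f_2)_*m^*$ for the Cartesian square, which is precisely the compatibility of push-forward with pull-back asserted in Section~\ref{kthtw}. Starting from $Ch_\alpha(f_*a)=\tfrac12\,n_*\big(Ch_{n^*\alpha}(n^*f_*a)\big)$ and substituting $n^*f_*a=(f_2)_*m^*a$, I apply Riemann-Roch for $f_2$ (legitimate since $Y_2$ is now oriented, the case just established), use $L(f_2)\cong m^*L(f)$, $\hat{A}(X_2)=m^*\hat{A}(X)$, $\hat{A}(Y_2)=n^*\hat{A}(Y)$, the projection formula, and finally $n_*(f_2)_*=(f\circ m)_*=f_*m_*$ with $m_*m^*=2$ to recover the equality. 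Nothing in this computation uses that $n$ is the orientation cover, only that it is a double cover, so it applies to any connected double cover of $Y$.

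Finally, to remove the grading on $\alpha$, I would use the canonical factorization $\alpha=\gamma_\zeta\otimes\alpha_0$ of Section~\ref{tcc}, where $\zeta\in H^1(Y,\mathbb{Z}_2)$ is the grading part, $\gamma_\zeta$ the lifting gerbe of the associated flat line bundle, and $\alpha_0$ ungraded. Taking $n:Y_2\to Y$ to be the double cover determined by $\zeta$ trivializes $n^*\gamma_\zeta$, so that $n^*\alpha\cong n^*\alpha_0$ is ungraded, and one repeats the base argument of the previous paragraph, now invoking the ungraded, non-oriented result for $f_2$. The main obstacle I anticipate is bookkeeping rather than conceptual: one must perform all of these identifications---$L(f_2)\cong m^*L(f)$, the factorization $\alpha=\gamma_\zeta\otimes\alpha_0$, and the trivialization of $n^*\gamma_\zeta$---at the level of gerbes equipped with connection and curving, so that the twisted Chern characters agree on the nose (including the curvature $3$-forms and the flat twists that index the target cohomology), and one must ensure the transfer identity, which in the graded case is consistent with the very definition of $Ch_{\mathcal{G}}$ through the six-term diagram, is available over $\mathbb{R}$. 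Once these compatibilities are in place the three reductions chain together to yield the formula for arbitrary $X$, $Y$ and graded $\alpha$.
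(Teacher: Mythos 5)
Your proposal is correct and takes essentially the same route as the paper: your first reduction (non-orientable $X$, oriented $Y$, ungraded $\alpha$, via the orientation double cover, the transfer identity and $m_*m^*=2$ in $K^*\otimes\mathbb{R}$) is exactly the paper's computation, and your two further reductions---base change along $n : Y_2 \to Y$ using $n^*f_* = (f_2)_*m^*$, $L(f_2)\cong m^*L(f)$, $\hat{A}(Y_2)=n^*\hat{A}(Y)$ and the projection formula, then killing the grading via the factorization $\alpha = \gamma_\zeta\otimes\alpha_0$ and the cover determined by $\zeta$---are a sound elaboration of what the paper compresses into ``a similar argument can be used to remove the other hypotheses.''
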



\subsection{The twisted Chern character in T-duality}\label{tcctd}

Now that we have reviewed the twisted Chern character we show how the T-duality transformations of twisted $K$-theory and twisted cohomology can be defined so as to commute with the twisted Chern characters.\\

Let $(E,h),(\hat{E},\hat{h})$ be $T$-duals over a compact base $M$. As in Section \ref{tdco} we may find twisted connections $A,\hat{A}$ with curvatures $F,\hat{F}$ and a $3$-form $H_3 \in \Omega^3(M)$ such that if we define $H,\hat{H}$ by
\begin{eqnarray*}
H &=& \pi^* (H_3 )+ A \wedge \pi^*(\hat{F}) \\
\hat{H} &=& \hat{\pi}^* (H_3 )+ \hat{A} \wedge \hat{\pi}^*(F)
\end{eqnarray*}
then $H,\hat{H}$ represent $h,\hat{h}$ in real cohomology. Next we choose (ungraded) bundle gerbes $\alpha_h,\alpha_{\hat{h}}$ representing the classes $h,\hat{h}$. We equip $\alpha_h , \alpha_{\hat{h}}$ with connections $\theta,\hat{\theta}$ and curvings $\omega,\hat{\omega}$ such that $H,\hat{H}$ are the respective curvatures. It is straightforward to see that given $h,\hat{h},H,\hat{H}$ we can find such triples $(\alpha_h,\theta,\omega),(\alpha_{\hat{h}},\hat{\theta},\hat{\omega})$. We write $\mathcal{G} = (\alpha_h,\theta,\omega)$, $\hat{\mathcal{G}} = (\alpha_{\hat{h}},\hat{\theta},\hat{\omega})$ for short.\\

On the correspondence space $F = E \times_M \hat{E}$ we have $p^*h = \hat{p}^* \hat{h}$. It follows that there is an isomorphism of gerbes with connection $U : p^* \mathcal{G} \to \hat{p}^* \hat{\mathcal{G}}$. Underlying $U$ is an isomorphism $\eta : \alpha_h \to \alpha_{\hat{h}}$ of gerbes without connections.

\begin{prop}\label{poinp} The isomorphism $U$ can be chosen so that $\eta$ satisfies the Poincar\'e property described in Section \ref{tdtk}.
\begin{proof}
First we make an observation about the automorphisms of a gerbe with connection and curving. Consider an automorphism determined by a line bundle with connection $(T,\nabla)$ and a shift in curving by a $2$-form $B$. We see that by (\ref{curvingshift}) we must have $B + 2 \pi i F = 0$, where $F$ is the curvature of $\nabla$. Thus if $c(T)$ is the Chern class of $T$ then we have that $B$ represents the image of $c(L)$ in real cohomology. Let us now apply this to the case at hand. Associated to the morphism $U$ is a $B$-shift by some $2$-form $C \in \Omega^2(F)$. We then have
\begin{equation*}
\hat{p}^* \hat{H} - p^*H = dC.
\end{equation*}

For any $b \in B$ let $T_b = \pi^{-1}(b)$, $\hat{T}_b = \hat{\pi}^{-1}(b)$ be the fibres of $E,\hat{E}$. The restrictions $h|_{T_b}$, $\hat{h}_{\hat{T}_b}$ vanish for dimensional reasons so it follows that there are trivializations $\tau : 0 \to \mathcal{G}|_{T_b}$, $\hat{\tau} : 0 \to \hat{\mathcal{G}}|_{\hat{T}_b}$ where $0$ denotes the trivial gerbe with trivial connection and curving. The isomorphisms $U|_{T_b \times \hat{T}_b}$ and $\hat{p}^* \hat{\tau} \circ p^* \tau $ map from $p^* (\mathcal{G}|_{T_b})$ to $\hat{p}^* (\hat{\mathcal{G}}|_{\hat{T}_b})$ so they differ by an automorphism. Up to $2$-morphism any automorphism is determined by a line bundle $T$ with connection $\nabla$. Suppose that associated to $\tau,\hat{\tau}$ are $B$-shifts by $2$-forms $\mu \in \Omega^2(T_b)$, $\hat{\mu} \in \Omega(\hat{T}_b)$. In fact since $T_b,\hat{T}_b$ are $1$-dimensional we have $\mu,\hat{\mu} = 0$. Let $F$ be the curvature of $\nabla$. In then follows that
\begin{equation*}
C|_{T_b \times \hat{T}_b} = F,
\end{equation*}
so $C$ restricted to the fibres is integral and represents the class $\mu \in H^2(T_b \times \hat{T}_b , \mathbb{Z})$ as described in Section \ref{tdtk}.\\

Now recall that the $2$-form $\mathcal{B} = -p^*A \wedge \hat{p}^* \hat{A}$ has the property $\hat{p}^* \hat{H} - p^*H = -d\mathcal{B}$. Therefore the $2$-form $\phi = C+\mathcal{B}$ is closed. Moreover restricted to the fibres $T_b \times \hat{T}_b$ we have that both $C$ and $\mathcal{B}$ represent closed integral $2$-forms. This means that even though the class $[C+\mathcal{B}] \in H^2(F,\mathbb{R})$ might not be integral its restriction to the fibres is. Making use of the filtration on cohomology associated to the Leray-Serre spectral sequence it follows that we may write $[C+\mathcal{B}] = x + p^*(y) + \hat{p}^*(z)$ where $x \in H^2(F,\mathbb{R})$ is integral, $y \in H^2(E,\mathbb{R})$, $z \in H^2(\hat{E},\mathbb{R})$. Now we can find a line bundle $T$ on $F$ with Chern class that equals $-x$ in real cohomology. Let us compose the morphism $U$ with the gerbe automorphism associated to this line bundle. Then redefine $U$ as this composition and redefine $C$ as the $2$-form shift associated to this newly defined $U$. It now follows that $C + \mathcal{B}$ restricted to the fibres is cohomologically trivial. Thus the class of $C$ restricted to the fibre $T_b \times \hat{T}_b$ agrees with the class of $-\mathcal{B}|_{T_b \times \hat{T}_b}$. But since $-\mathcal{B} = p^*A \wedge \hat{p}^* \hat{A}$ this class in $H^2(T_b \times \hat{T}_b , \mathbb{R})$ is exactly image of the class of the Poincar\'e line bundle. It then follows that $U$ has the Poincar\'e property.
\end{proof}
\end{prop}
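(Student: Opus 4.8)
The plan is to track the isomorphism $U$ through the $2$-form data attached to it by the gerbe-with-connection formalism, reduce the Poincar\'e property to a statement about the restriction of this form to the fibres of $q : F \to M$, and then correct $U$ by composing with a suitable gerbe automorphism coming from a line bundle on $F$. First I would record the two facts I need about morphisms and automorphisms of gerbes with connection and curving. By (\ref{curvingshift}), underlying $U$ is a $B$-shift by a $2$-form $C \in \Omega^2(F)$ satisfying $\hat{p}^*\hat{H} - p^*H = dC$; and an automorphism of a fixed gerbe with connection and curving determined by a line bundle $(T,\nabla)$ forces, via (\ref{curvingshift}), its $B$-shift to satisfy $B = -2\pi i F$ with $F$ the curvature of $\nabla$, so $B$ represents the image of $c_1(T)$ in real cohomology. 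Thus I am free to modify $C$ by the real representative of any integral class on $F$ while staying among the admissible choices of $U$.

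Next I would identify the fibrewise content of the Poincar\'e class $\mu$. Restricting to a fibre $T_b \times \hat{T}_b$ the classes $h,\hat{h}$ vanish for dimensional reasons, so $\mathcal{G}$ and $\hat{\mathcal{G}}$ admit trivializations $\tau,\hat{\tau}$, and since $T_b,\hat{T}_b$ are one-dimensional these carry zero curving. Comparing $U|_{T_b \times \hat{T}_b}$ with $\hat{p}^*\hat{\tau} \circ (p^*\tau)^{-1}$ exhibits their difference as an automorphism whose $B$-shift is exactly $C|_{T_b \times \hat{T}_b}$, so this restriction is an integral $2$-form representing $\mu \in H^2(T_b \times \hat{T}_b,\mathbb{Z})$. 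The Poincar\'e property is therefore the statement that $[C|_{T_b \times \hat{T}_b}]$ is the generator. To compare against a known generator I would bring in the Hori $2$-form $\mathcal{B} = \hat{p}^*\hat{A} \wedge p^* A$, which satisfies $\hat{p}^*\hat{H} - p^*H = -d\mathcal{B}$; since $A,\hat{A}$ restrict to the angle forms on the two circle factors, $-\mathcal{B}|_{T_b \times \hat{T}_b}$ is precisely the Poincar\'e $2$-form and represents the desired generator. As $dC = -d\mathcal{B}$, the form $\phi = C + \mathcal{B}$ is closed, and on each fibre it restricts to an integral class; it remains to arrange that this fibre restriction is cohomologically trivial.

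Finally I would use the Leray--Serre filtration on $H^2(F,\mathbb{R})$ for $q : F \to M$ to write $[\phi] = x + p^*(y) + \hat{p}^*(z)$ with $x \in H^2(F,\mathbb{R})$ integral and $y \in H^2(E,\mathbb{R})$, $z \in H^2(\hat{E},\mathbb{R})$; the terms $p^*y,\hat{p}^*z$ restrict trivially to the fibres, so only $x$ carries the fibrewise obstruction. Choosing a line bundle $T$ on $F$ with $c_1(T)_{\mathbb{R}} = -x$ and replacing $U$ by its composition with the associated gerbe automorphism shifts $C$ by the real representative of $-x$, after which $[(C+\mathcal{B})|_{T_b \times \hat{T}_b}] = 0$ and hence $[C|_{T_b \times \hat{T}_b}] = [-\mathcal{B}|_{T_b \times \hat{T}_b}]$ is the Poincar\'e generator for every $b$. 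I expect the main obstacle to be this last step: one must check that a single global line bundle simultaneously corrects the fibrewise class over all of $M$, which is exactly what the decomposition guarantees, since the fibre-nontrivial integral part is captured entirely by $x$ and is separated cleanly from the base-direction contributions. A secondary point needing care is the normalization showing $-\mathcal{B}$ restricts to the actual generator rather than a multiple of it, which rests on the fact that $A,\hat{A}$ are genuine twisted connection forms.
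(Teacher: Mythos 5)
Your proposal is correct and follows essentially the same route as the paper's own proof: extracting the $B$-shift $C$ of $U$, identifying the Poincar\'e class $\mu$ with the fibrewise restriction of $C$ via the trivializations $\tau,\hat{\tau}$ (whose curvings vanish for dimensional reasons), comparing with the Hori form $\mathcal{B}$ so that $C+\mathcal{B}$ is closed with integral fibre restriction, splitting $[C+\mathcal{B}] = x + p^*(y) + \hat{p}^*(z)$ via the Leray--Serre filtration for $q : F \to M$, and correcting $U$ by the automorphism attached to a line bundle with Chern class $-x$. The only additions are cosmetic (you note explicitly that $p^*y,\hat{p}^*z$ die on the fibres and flag the normalization of $-\mathcal{B}|_{T_b\times\hat{T}_b}$ as the Poincar\'e generator), both of which are implicit in the paper's argument.
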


If we now assume that $U$ satisfies the Poincar\'e property then as we have seen the $2$-form $d = C + \mathcal{B}$ has the property that restricted to the fibres it is cohomologically trivial. By considering the Leray-Serre spectral sequence for $q : F \to M$ we see that the class of $d$ can be written in the form $[d] = p^*(y) - \hat{p}^*(z)$ for some classes $y \in H^2(E,\mathbb{R})$, $z \in H^2(\hat{E},\mathbb{R})$. Let $\beta,\hat{\beta}$ be $2$-forms on $E,\hat{E}$ representing $y,z$. Now let us redefine the gerbes with connections and curvings $\mathcal{G},\hat{\mathcal{G}}$ by performing a shift in curving by $\beta,\hat{\beta}$ respectively. Then we get a morphism from $p^*\mathcal{G}$ to $\hat{p}^*\hat{\mathcal{G}}$ by composing $U$ with the apropriate $2$-form shifts. Let us redefine $U$ to be this composition and redefine $C$ to be the correspoding $2$-form shift. After doing this we find that $C$ and $-\mathcal{B}$ differ by an exact term. In summary given $h,\hat{h},H,\hat{H}$ as above we may choose representative gerbes with connections and curvings $\mathcal{G},\hat{\mathcal{G}}$ and an isomorphism $U : p^*\mathcal{G} \to \hat{p}^* \hat{\mathcal{G}}$ such that the $B$-shift associated to $U$ has the form $C = -\mathcal{B} + da$. From $U$ we get an induced isomorphism $u : K^*(F ,  p^* \mathcal{G}) \to K^*(F , \hat{p}^* \hat{\mathcal{G}} )$ of twisted $K$-theory that fits into a commutative diagram
\begin{equation*}\xymatrix{
K^*(F ,  p^* \mathcal{G}) \ar[r]^u \ar[d]^{Ch} & K^*(F , \hat{p}^* \hat{\mathcal{G}} ) \ar[d]^{Ch} \\
H^*(F,p^*H) \ar[r]^{e^{\mathcal{B}}} & H^*(F,\hat{p}^* \hat{H} ).
}
\end{equation*}

This is already close to proving the result that T-duality respects the twisted Chern character. We know that pull-back respects the twisted Chern character so we get a commutative diagram
\begin{equation*}\xymatrix{
K^*(E,\mathcal{G}) \ar[r]^-{p^*} \ar[d]^{Ch} & K^*(F ,  p^* \mathcal{G}) \ar[d]^{Ch} \\
H^*(E,H) \ar[r]^-{p^*} & H^*(F,p^*H)
}
\end{equation*}
The remaining ingredient is the behaviour of the twisted Chern character under push-forward. This relation is given by the Riemann-Roch formula for twisted $K$-theory. In the simple case at hand $q^*V = {\rm Ker}( \hat{p}_* : TF \to T\hat{E})$ is a real line bundle so $\hat{A}(q^*V) = 1$. It follows that Riemann-Roch in this case reduces to a commutative diagram
\begin{equation*}\xymatrix{
K^*(F , q^*\gamma \otimes \hat{p}^* \hat{\mathcal{G}} ) \ar[d]^{Ch} \ar[r]^-{\hat{p}_*} & K^*(\hat{E}, \hat{\mathcal{G}} ) \ar[d]^{Ch} \\
H^*(F,\hat{p}^* (\xi,\hat{H}) ) \ar[r]^-{\hat{p}_*} & H^*(\hat{E} , \hat{H})
}
\end{equation*}
where $\gamma$ is the lifting gerbe for $V$ equipped with flat connection and zero curving. Using the canonical trivialization $\gamma \otimes \gamma = 1$ we get a similar commutative diagram
\begin{equation*}\xymatrix{
K^*(F , \hat{p}^* \hat{\mathcal{G}} ) \ar[d]^{Ch} \ar[r]^-{\hat{p}_*} & K^*(\hat{E}, \hat{\pi}^*\gamma \otimes \hat{\mathcal{G}} ) \ar[d]^{Ch} \\
H^*(F,\hat{p}^* \hat{H} ) \ar[r]^-{\hat{p}_*} & H^*(\hat{E} , (\xi,\hat{H}))
}
\end{equation*}

Putting it all together obtain:
\begin{prop} We have a commutative diagram
\begin{equation*}\xymatrix{
K^*(E,\mathcal{G}) \ar[r]^-{T^K} \ar[d]^{Ch} & K^{*-1}(\hat{E}, \hat{\pi}^*\gamma \otimes \hat{\mathcal{G}}) \ar[d]^{Ch} \\
H^*(E,H) \ar[r]^-{T^H} & H^{*-1}(E,(\xi,\hat{H}))
}
\end{equation*}
where $T^K = \hat{p}_* \circ \, u \circ \, p^*$ and $T^H = \hat{p}_* \circ \, e^{\mathcal{B}} \circ \, p^*$. Similarly we have a commutative diagram
\begin{equation*}\xymatrix{
K^*(E,\pi^*\gamma \otimes \mathcal{G}) \ar[r]^-{T^K_\xi} \ar[d]^{Ch} & K^{*-1}(\hat{E}, \hat{\mathcal{G}}) \ar[d]^{Ch} \\
H^*(E,(\xi,H)) \ar[r]^-{T^H_\xi} & H^{*-1}(E,\hat{H})
}
\end{equation*}
with $T^K_\xi,T^H_\xi$ defined similarly.
\end{prop}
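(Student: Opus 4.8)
The plan is to assemble the required diagram by concatenating the three commutative squares already displayed just above the statement, since all of the genuinely substantive work has been done: the Poincar\'e property of $U$ (Proposition \ref{poinp}), the identification of its $B$-shift as $C = -\mathcal{B} + da$, and the Riemann--Roch formula for graded bundle gerbes. Observe that both composites factor into three consecutive arrows, $T^K = \hat{p}_* \circ u \circ p^*$ and $T^H = \hat{p}_* \circ e^{\mathcal{B}} \circ p^*$, so it suffices to check that the twisted Chern character intertwines each arrow with its cohomological counterpart and then paste.

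First I would record the three intertwining identities for the diagram involving $T^K, T^H$. The pull-back square gives $Ch \circ p^* = p^* \circ Ch$, which holds because the twisted Chern character respects pull-backs. The middle square gives $Ch \circ u = e^{\mathcal{B}} \circ Ch$: by the $B$-shift formula the induced isomorphism $u$ satisfies $Ch \circ u = e^{-C} \circ Ch$, and since $C = -\mathcal{B} + da$ we have $e^{-C} = e^{\mathcal{B}} \cdot e^{-da}$; as $-da = d(-a)$ is exact, multiplication by $e^{-da}$ is chain-homotopic to the identity on the $p^*H$-twisted complex, so $e^{-C}$ and $e^{\mathcal{B}}$ agree on twisted cohomology. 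The push-forward square gives $Ch \circ \hat{p}_* = \hat{p}_* \circ Ch$; here I would invoke the version of graded Riemann--Roch in which the lifting gerbe is carried onto the target (obtained via the trivialization $\gamma \otimes \gamma = 1$), noting that the vertical bundle $q^*V$ of $F \to \hat{E}$ is a real line bundle, so $\hat{A}(q^*V) = 1$ and all $\hat{A}$-genus factors drop out. Chaining these yields
\begin{equation*}
Ch \circ T^K = Ch \circ \hat{p}_* \circ u \circ p^* = \hat{p}_* \circ Ch \circ u \circ p^* = \hat{p}_* \circ e^{\mathcal{B}} \circ Ch \circ p^* = \hat{p}_* \circ e^{\mathcal{B}} \circ p^* \circ Ch = T^H \circ Ch,
\end{equation*}
which is the first diagram.

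For the second diagram I would run the identical three-step argument after replacing $\mathcal{G}$ by $\pi^*\gamma \otimes \mathcal{G}$ and tensoring the middle isomorphism with the identity on the flat lifting gerbe, $u_\xi = 1 \otimes u$. Because $\gamma$ carries a flat connection and zero curving, tensoring by it leaves the $B$-shift unchanged, so the middle square still produces the factor $e^{\mathcal{B}}$ on cohomology. The only point requiring attention is the push-forward step, where I would now use the direct form of the Riemann--Roch square with source $K^*(F, q^*\gamma \otimes \hat{p}^*\hat{\mathcal{G}})$ and target $K^{*-1}(\hat{E}, \hat{\mathcal{G}})$, the cohomological side landing in $H^{*-1}(\hat{E}, \hat{H})$.

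The hard part is not any single computation but the bookkeeping: keeping straight graded versus ungraded gerbes, the two equivalent forms of $\hat{p}_*$ (lifting gerbe explicit on the source versus absorbed onto the target via $\gamma \otimes \gamma = 1$), and verifying that the flat factor $\gamma$ never disturbs the curving shift, so that the cohomological transform stays exactly $e^{\mathcal{B}}$. Once these identifications are made consistently, commutativity of both diagrams is a purely formal consequence of the three previously established squares.
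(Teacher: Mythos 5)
Your proposal is correct and follows essentially the same route as the paper: the paper's own argument is precisely the pasting of the pull-back square, the $B$-shift square (valid because $C=-\mathcal{B}+da$ and multiplication by $e^{-da}$ acts trivially on twisted cohomology), and the Riemann--Roch square with $\hat{A}(q^*V)=1$, in both the $\gamma$-on-source and $\gamma$-on-target forms. Your explicit chain-homotopy justification for discarding the exact term $da$ and your handling of the flat gerbe $\gamma$ in the second diagram match what the paper leaves implicit.
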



\section{Courant algebroids}\label{catd}

The structure of Courant algebroids is intimately linked to T-duality, see for example \cite[Chapter 8]{gual},\cite[Chapter 7]{cav} and \cite{hu}. We will demonstrate that such a relation extends to T-dual circle bundles by associating to a pair $(E,H)$ a Courant algebroid $\mathcal{E}(E,H)$ over $M$ and show that T-dual pairs have isomorphic Courant algebroids.\\

Courant algebroids were introduced in \cite{lwx} as a generalization of the bracket used by Courant \cite{cour} to define Dirac structures. We recall the definition of Courant algebroids from the point of view of the Dorfman bracket \cite{roy}. A {\em Courant algebroid} on a smooth manifold $X$ can be defined as a vector bundle $C \to X$ with non-degenerate bilinear form $( \, , \, )$ bilinear bracket $[ \, , \, ] : \Gamma(C) \otimes \Gamma(C) \to \Gamma(C)$ called the {\em Dorfman bracket} and bundle map $\rho : C \to TX$ called the {\em anchor} such that
\begin{itemize}
\item{$[a,[b,c]] = [[a,b],c] + [b,[a,c]]$}
\item{$\rho[a,b] = [\rho(a),\rho(b)]$}
\item{$[a,fb] = \rho(a)(f)b + f[a,b]$}
\item{$[a,b] + [b,a] = d (a,b)$}
\item{$\rho(a)(b,c) = ([a,b],c) + (b,[a,c])$}
\end{itemize}
where $a,b,c \in \Gamma(C)$, $f$ is a function on $X$ and $d$ is the operator $d : \mathcal{C}^\infty(X) \to \Gamma(C)$ defined by $(df , a ) = \rho(a)(f)$. Courant algebroids are more often defined in terms of a skew-symmetric bracket called a Courant bracket as in \cite{lwx}. The Courant bracket is the anti-symmetrization of the Dorfman bracket.\\

A Courant algebroid $C \to X$ is {\em exact} if the sequence
\begin{equation*}\xymatrix{
0 \ar[r] & T^*X \ar[r]^{\rho^*} & C \ar[r]^\rho & TX \ar[r] & 0
}
\end{equation*}
is exact. Here $\rho^*$ is the transpose $T^*X \to C^*$ of $\rho$ followed by the identification of $C$ and $C^*$ using the pairing. Exact Courant algebroids over a manifold $X$ are classified by third cohomology with real coefficients, $H^3(X,\mathbb{R})$. In fact given a closed $3$-form $H$ representing a class in $H^3(X,\mathbb{R})$ we make $C = TX \oplus T^*X$ into a Courant algebroid with $H$-twisted Dorfman bracket \cite{sw} given by
\begin{equation}\label{courb}
[(A , \alpha) , (B , \beta) ]_H = ([A,B] , \mathcal{L}_A \beta - i_B d \alpha + i_B i_A H),
\end{equation}
pairing $( \, , \, )$ given by the obvious paring of $TX$ with $T^*X$ and anchor the projection $\rho : C \to TX$. Then $(C,[ \, , \, ]_H , ( \, , \, ) , \rho)$ is an exact Courant algebroid and one can show that every exact Courant algebroid on $X$ is isomorphic to one of this form. Given two closed $3$-forms $H,H'$ the associated exact Courant algebroids are isomorphic if and only if $H$ and $H'$ represent the same class in $H^3(X,\mathbb{R})$.\\

Given a pair $(E,h)$ consisting of a circle bundle $\pi : E \to M$ over $M$ and a class $h \in H^3(E,\mathbb{Z})$ there is an exact Courant algebroid $C_h \to E$ over $E$ corresponding to the class $h_\mathbb{R} \in H^3(E,\mathbb{R})$. Given a representative $H \in \Omega^3(E)$ of the class $h$ we can realize $C_h$ as the bundle $TE \oplus T^*E$ equipped with the Dorfman bracket (\ref{courb}) twisted by $H$. If we choose an invariant representative for $H$ then the space of invariant sections of $TE \oplus T^*E$ is closed under the twisted Dorfman bracket and this defines a Courant algebroid $\mathcal{E}(E,H)$ over $M$. The anchor is the composition $C_h \to TE \to TM$.\\

Let $(E,h),(\hat{E},\hat{h})$ be T-dual circle bundles. As in Section \ref{tdco} we may find twisted connections $A,\hat{A}$ with curvatures $F,\hat{F}$ and a $3$-form $H_3 \in \Omega^3(M)$ such that if we define $H,\hat{H}$ by
\begin{eqnarray*}
H &=& \pi^* (H_3 )+ A \wedge \pi^*(\hat{F}) \\
\hat{H} &=& \hat{\pi}^* (H_3 )+ \hat{A} \wedge \hat{\pi}^*(F)
\end{eqnarray*}
then $H,\hat{H}$ represent $h,\hat{h}$ in real cohomology. The twisted connections $A$,$\hat{A}$ yield associated splittings
\begin{eqnarray*}
TE &=& TM \oplus \mathbb{R}_\xi \\
T^*E &=& T^*M \oplus \mathbb{R}_\xi \\
T\hat{E} &=& TM \oplus \mathbb{R}_\xi \\
T^*\hat{E} &=& T^*M \oplus \mathbb{R}_\xi
\end{eqnarray*}
where $\xi \in H^1(M,\mathbb{Z}_2)$ is the first Stiefel-Whitney class of $E$ and $\hat{E}$ and $\mathbb{R}_\xi$ the associated flat line bundle. Notice that the bundles $TE \oplus T^*E$ and $T\hat{E} \oplus T^* \hat{E}$ can both be identified with $TM \oplus \mathbb{R}_\xi \oplus \mathbb{R}_\xi \oplus T^*M$.\\

The idea now is that $T$-duality is an exchange of the two $\mathbb{R}_\xi$-factors. Define a map $\phi : TE \oplus T^*E \to T\hat{E} \oplus T^*\hat{E}$ to be the interchange
\begin{equation*}
\phi ( X,x,a,\lambda) = (X , a , x , \lambda)
\end{equation*}
where we have identified $TE \oplus T^*E$ and $T\hat{E} \oplus T^*\hat{E}$ with $TM \oplus \mathbb{R}_\xi \oplus \mathbb{R}_\xi \oplus T^*M$.

\begin{prop}\label{cai} The map $\phi : TE \oplus T^*E \to T\hat{E} \oplus T^*\hat{E}$ determines an isomorphism of Courant algebroids $\phi : \mathcal{E}(E,H) \to \mathcal{E}(\hat{E},\hat{H})$.
\begin{proof}
Our proof is a straightforward adaptation of \cite[Theorem 7.2]{cav}. Recall that for an invariant form $\omega \in \Omega^*_{\rm inv}(E,\mathbb{R})$ we have a decomposition
\begin{equation*}
\omega = \pi^*(\alpha) + A \wedge \pi^*(\beta)
\end{equation*}
and that the T-duality map $T = \hat{p}_* \circ e^{\mathcal{B}} \circ p^* $ sends $\omega$ to
\begin{equation*}
T \omega = \hat{\pi}^* (\beta) - \hat{A} \wedge \hat{\pi}^*(\alpha).
\end{equation*}

There is a natural action of sections of $TE \oplus T^*E$ on $\Omega^*(E)$. Let $a$ be a section of $TE \oplus T^*E$ and $\omega \in \Omega^*(E)$. Then
\begin{equation*}
a \omega = i_Y \omega + a \wedge \omega.
\end{equation*}

If $a$ and $\omega$ are invariant one sees easily that
\begin{equation}\label{clifft}
T( a \omega ) = -\phi(a) T \omega
\end{equation}
We also have that $T$ intertwines $d_H$ and $d_{\hat{H}}$ up to sign:
\begin{equation}\label{tint}
T (d_H \omega ) = - d_{\hat{H}} (T \omega).
\end{equation}

The Dorfman bracket is a derived bracket in the sense that
\begin{equation*}
[ a , b ]_H \omega = [ [d_H , a ] , b ] \omega
\end{equation*}
\cite{kos} where $a,b$ are sections of $TE \oplus T^*E$ and $\omega \in \Omega^*(E)$. Writing this out in full we have
\begin{equation*}
[a,b]_H \omega = d_H( ab \omega) - a d_H(b \omega) - b d_H (a \omega) + ba d_H \omega.
\end{equation*}
Applying $T$ and using properties (\ref{clifft}),(\ref{tint}) we find
\begin{eqnarray*}
\phi([a,b]_H) T \omega &=& d_{\hat{H}} ( \phi(a)\phi(b) T\omega) - \phi(a) d_{\hat{H}}(\phi(b) T\omega) \\
&& - \phi(b) d_{\hat{H}} (\phi(a) T\omega) + \phi(b)\phi(a) d_{\hat{H}} T\omega,
\end{eqnarray*}
which reduces to simply
\begin{equation*}
\phi( [a,b]_H ) = [ \phi(a) , \phi(b) ]_{\hat{H}}.
\end{equation*}
From this it follows easily that $\phi$ is an isomorphism of Courant algebroids.
\end{proof}
\end{prop}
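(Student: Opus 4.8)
The plan is to follow the spinorial approach to Courant algebroids of Gualtieri and Cavalcanti \cite{cav}: realize the Dorfman bracket as a derived bracket acting on the exterior algebra $\Omega^*_{\rm inv}$ of invariant forms, and use the T-duality map $T$ of Section \ref{tdco} as an intertwining operator. Since the pairing, the differential $d_H$ and the bracket are all expressed through the action of $TE \oplus T^*E$ on forms, it suffices to control how $\phi$ and $T$ interact with that action. Concretely I would verify four things: that $\phi$ is a bundle isomorphism over the identity, that it preserves the nondegenerate pairing, that it intertwines the anchors, and that it preserves the Dorfman bracket. The first three are immediate and the last is the substance of the statement.

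First I would dispose of the structural parts. The map $\phi(X,x,a,\lambda) = (X,a,x,\lambda)$ is visibly a bundle isomorphism, since it merely interchanges the two copies of $\mathbb{R}_\xi$ in the common model $TM \oplus \mathbb{R}_\xi \oplus \mathbb{R}_\xi \oplus T^*M$ of $TE \oplus T^*E$ and $T\hat E \oplus T^*\hat E$. In this model the canonical pairing splits as the duality pairing of $TM$ with $T^*M$ together with the pairing of the two $\mathbb{R}_\xi$ factors against each other; the latter is symmetric under their interchange, so $\phi$ is an isometry. The anchor of $\mathcal{E}(E,H)$ reads off the $TM$-component $X$, which $\phi$ leaves untouched, so the anchors are intertwined.

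For the bracket I would record the action $a\omega = i_Y\omega + \mu\wedge\omega$ of a section $a=(Y,\mu)$ and then establish two intertwining identities for invariant $a$ and invariant $\omega$. The Clifford-type identity $T(a\omega) = -\phi(a)\,T\omega$ should fall out of the explicit decomposition $\omega = \pi^*(\alpha) + A\wedge\pi^*(\beta)$ provided by the twisted connection $A$ and the resulting formula for $T\omega$ computed in the proof of Proposition \ref{tci}: contraction with the vertical direction of $TE$ produces the $\pi^*(\beta)$-term, while wedging with the connection form $A$ produces the $A\wedge(\cdot)$-term, and passing from $(E,A)$ to $(\hat E,\hat A)$ exchanges precisely these two operations. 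This is the differential-form shadow of the swap built into $\phi$. The differential identity $T(d_H\omega) = -d_{\hat H}(T\omega)$ is the chain-level refinement of the fact, already established in Proposition \ref{tci}, that $T$ is an isomorphism of twisted cohomologies. Both are short computations once the decomposition is in hand.

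Finally I would feed these into the derived-bracket formula
\begin{equation*}
[a,b]_H\,\omega = d_H(ab\,\omega) - a\,d_H(b\,\omega) - b\,d_H(a\,\omega) + ba\,d_H\omega
\end{equation*}
and apply $T$ term by term, using the two identities to move $T$ past each factor. After collecting signs the right-hand side becomes the corresponding expression for $[\phi(a),\phi(b)]_{\hat H}$ acting on $T\omega$, yielding $\phi([a,b]_H)\,T\omega = [\phi(a),\phi(b)]_{\hat H}\,T\omega$ for every invariant $\omega$. Since $T$ is invertible on invariant forms, with inverse $-\hat T_\xi$ by Proposition \ref{tci}, and the action of $T\hat E \oplus T^*\hat E$ on forms is faithful, this forces $\phi([a,b]_H) = [\phi(a),\phi(b)]_{\hat H}$, completing the proof. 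I expect the main obstacle to be the sign bookkeeping: the crucial minus sign in $T(a\omega) = -\phi(a)T\omega$ together with the sign in the intertwining of differentials must conspire so that the four terms reassemble with exactly the right signs — getting either one wrong breaks the cancellation — and it is the interplay of form-degree parity with both the Clifford action and the fibre integration $\hat p_*$ that demands the most care.
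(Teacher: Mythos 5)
Your proposal is correct and follows essentially the same route as the paper's proof: the derived-bracket formula for the Dorfman bracket, the Clifford-type intertwining identity $T(a\omega) = -\phi(a)T\omega$, the differential identity $T(d_H\omega) = -d_{\hat H}(T\omega)$, and invertibility of $T$ on invariant forms (Proposition \ref{tci}) to cancel $T\omega$. The only difference is that you spell out the routine verifications (pairing, anchor) that the paper leaves as ``it follows easily,'' which is fine.
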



\section{Examples}\label{examps}

We will consider some low dimensional examples of circle bundle $T$-duality. Since the case of oriented circle bundles has already been considered in the literature our examples will focus on non-oriented circle bundles.\\

In order to calculate twisted $K$-theory we make extensive use of the Atiyah-Hirzebruch spectral sequence. For a twisting class $\tau = (\xi , \eta) \in H^1(M,\mathbb{Z}_2) \times H^3(M,\mathbb{Z})$ The spectral sequence has $E_2^{p,q} = H^p(M , K^q({\rm pt}))$ and abuts to the twisted $K$-theory $K^*(M,\tau)$ \cite{atseg1}. Note that the coefficients in the $E_2$ term are twisted by $\xi$. 

For examples of dimension no greater than three the only possibly non-trivial differential is $d_3 : H^0(M , \mathbb{Z}_\xi) \to H^3(M , \mathbb{Z}_\xi)$. If the class $\xi$ is non-trivial then $H^0(M,\mathbb{Z}_\xi) = 0$, so $E_2 = E_\infty$. If $\xi = 0$ then it is known that $d_3(x) = Sq^3_{\mathbb{Z}}(x) - \eta  x$, where $Sq^3_\mathbb{Z}$ is an integral Steenrod operation \cite{atseg2}. Restricted to $d_3 : H^0(M,\mathbb{Z}) \to H^3(M,\mathbb{Z})$ however the differential reduces to just $d_3(x) = -\eta x$. In either case, we see that it is quite simple to apply the spectral sequence in low dimensions.

In some examples we will only be able to determine the twisted $K$-theory groups up to an extension problem. In such cases however we get an exact result on the T-dual, so T-duality allows us to solve the extension problem.


\subsection{Klein bottle}
Let $M = S^1$. There are two $S^1$-bundles over $M$, the trivial oriented bundle $T^2 \to S^1$ and the unique non-oriented bundle $K \to S^1$ which is the Klein bottle. Since these bundles are $2$-dimensional they can not have a non-trivial $H$-flux. We conclude that $T^2$ and $K$ are self $T$-dual.

Let $\xi$ denote the non-trivial class in $H^1(S^1,\mathbb{Z}_2) = \mathbb{Z}_2$. The cohomologies of $K$ with coefficients in $\mathbb{Z}$ or $\mathbb{Z}_\xi$ are easily computed to be
\begin{equation*}
\renewcommand{\arraystretch}{1.4}
\begin{tabular}{|l|l|l|}
\hline
$i$ & $H^i(K,\mathbb{Z})$ & $H^i(K,\mathbb{Z}_\xi)$ \\
\hline
$0$ & $\mathbb{Z}$ & $0$ \\
$1$ & $\mathbb{Z}$ & $\mathbb{Z} \oplus \mathbb{Z}_2$ \\
$2$ & $\mathbb{Z}_2$ & $\mathbb{Z}$ \\
\hline
\end{tabular}
\end{equation*}

From this table we can easily apply the Atiyah-Hirzebruch spectral sequence. There is no extension problem and we find
\begin{equation*}
\renewcommand{\arraystretch}{1.4}
\begin{tabular}{|l|l|l|}
\hline
$i$ & $K^i(K)$ & $K^{i}(K,\xi)$ \\
\hline
$0$ & $\mathbb{Z} \oplus \mathbb{Z}_2$ & $\mathbb{Z}$ \\
$1$ & $\mathbb{Z}$ & $\mathbb{Z} \oplus \mathbb{Z}_2$ \\
\hline
\end{tabular}
\end{equation*}
in accordance with T-duality.


\subsection{Non-oriented circle bundles over compact Riemann surfaces}

Let $\Sigma_g$ be a compact connected Riemann surface of genus $g$. We will consider circle bundles over $\Sigma_g$ with fibre orientation class $0 \neq \xi \in H^1(\Sigma_g , \mathbb{Z}_2)$. Note that $H^1(\Sigma_g , \mathbb{Z}) = \mathbb{Z}^{2g}_2$, so we assume $g > 0$.\\

The cohomologies of $\Sigma_g$ with coefficients $\mathbb{Z},\mathbb{Z}_\xi$ are
\begin{equation*}
\renewcommand{\arraystretch}{1.4}
\begin{tabular}{|l|l|l|}
\hline
$i$ & $H^i(\Sigma_g,\mathbb{Z})$ & $H^i(\Sigma_g,\mathbb{Z}_\xi)$ \\
\hline
$0$ & $\mathbb{Z}$ & $0$ \\
$1$ & $\mathbb{Z}^{2g}$ & $\mathbb{Z}^{2g-2} \oplus \mathbb{Z}_2$ \\
$2$ & $\mathbb{Z}$ & $\mathbb{Z}_2$ \\
\hline
\end{tabular}
\end{equation*}
in particular, $H^2(\Sigma_g , \mathbb{Z}_\xi) = \mathbb{Z}_2$, so there are exactly two circle bundles over $\Sigma_g$ with fibre orientation class equal to $\xi$. For $j \in \mathbb{Z}_2$, let $E_g^j$ denote the corresponding circle bundle.

We will determine the cohomology of $E_g^j$ with coefficients in $\mathbb{Z}$ and $\mathbb{Z}_\xi$. To do this we first calculate the cohomology with local coefficients $\mathbb{Z}_\xi$ by the Leray-Serre spectral sequence then we use Poincar\'{e} duality to determine the cohomology with $\mathbb{Z}$ coefficients. Doing this avoids an extension problem. We find
\begin{equation*}
\renewcommand{\arraystretch}{1.4}
\begin{tabular}{|l|l|l|l|l|}
\hline
$i$ & $H^i(E^0_g,\mathbb{Z})$ & $H^i(E^0_g,\mathbb{Z}_\xi)$ & $H^i(E^1_g,\mathbb{Z})$ & $H^i(E^1_g,\mathbb{Z}_\xi)$ \\
\hline
$0$ & $\mathbb{Z}$ & $0$ & $\mathbb{Z}$ & $0$\\
$1$ & $\mathbb{Z}^{2g}$ & $\mathbb{Z}^{2g-1} \oplus \mathbb{Z}_2$ & $\mathbb{Z}^{2g}$ & $\mathbb{Z}^{2g-1} \oplus \mathbb{Z}_2$ \\
$2$ & $\mathbb{Z}^{2g-1}\oplus \mathbb{Z}_2 $ & $\mathbb{Z}^{2g} \oplus \mathbb{Z}_2$ & $\mathbb{Z}^{2g-1}$ & $\mathbb{Z}^{2g}$ \\
$3$ & $\mathbb{Z}_2$ & $\mathbb{Z}$ & $\mathbb{Z}_2$ & $\mathbb{Z}$ \\
\hline
\end{tabular}
\end{equation*}
For either bundle we find $H^3(E_g^j , \mathbb{Z}) = \mathbb{Z}_2$. Let $k \in \mathbb{Z}_2$ and let $\eta_k \in H^3(E_g^j , \mathbb{Z})$ be the corresponding $H$-flux. T-duality of pairs $(E_g^j,\eta_k)$ in this case is easily seen to be the interchange $j \leftrightarrow k$.\\

We now turn to the calculation of the twisted $K$-theory. Using the Atiyah-Hirzebruch spectral sequence we obtain
\begin{equation*}
\renewcommand{\arraystretch}{1.4}
\begin{tabular}{|l|l|l|l|l|}
\hline
$i$ & $K^{i}(E^0_g,\eta_0)$ & $K^{i}(E^0_g,(\xi,\eta_0))$ & $K^{i}(E^1_g,\eta_0)$ & $K^{i}(E^1_g,(\xi,\eta_0))$ \\
\hline
$0$ & $\mathbb{Z}^{2g} \oplus \mathbb{Z}_2$ &  $\mathbb{Z}^{2g} \oplus \mathbb{Z}_2$ & $\mathbb{Z}^{2g}$ &  $\mathbb{Z}^{2g}$\\
$1$ & $\mathbb{Z}^{2g} \oplus \mathbb{Z}_2$ & $*$ & $\mathbb{Z}^{2g} \oplus \mathbb{Z}_2$ & $*$ \\
\hline
\hline
$i$ & $K^{i}(E^0_g,\eta_1)$ & $K^{i}(E^0_g,(\xi,\eta_1))$ & $K^{i}(E^1_g,\eta_1)$ & $K^{i}(E^1_g,(\xi,\eta_1))$ \\
\hline
$0$ & $\mathbb{Z}^{2g} \oplus \mathbb{Z}_2$ & $\mathbb{Z}^{2g} \oplus \mathbb{Z}_2$ & $\mathbb{Z}^{2g}$ &  $\mathbb{Z}^{2g}$ \\
$1$ & $\mathbb{Z}^{2g}$ & $*$ & $\mathbb{Z}^{2g}$ & $*$ \\
\hline
\end{tabular}
\end{equation*}
where the four $*$ terms indicates that the spectral sequence does not completely determine these groups. Fortunately we can use T-duality to fill in the remaining blanks, thus the complete list is
\begin{equation*}
\renewcommand{\arraystretch}{1.4}
\begin{tabular}{|l|l|l|l|l|}
\hline
$i$ & $K^{i}(E^0_g,\eta_0)$ & $K^{i}(E^0_g,(\xi,\eta_0))$ & $K^{i}(E^1_g,\eta_0)$ & $K^{i}(E^1_g,(\xi,\eta_0))$ \\
\hline
$0$ & $\mathbb{Z}^{2g} \oplus \mathbb{Z}_2$ &  $\mathbb{Z}^{2g} \oplus \mathbb{Z}_2$ & $\mathbb{Z}^{2g}$ &  $\mathbb{Z}^{2g}$\\
$1$ & $\mathbb{Z}^{2g} \oplus \mathbb{Z}_2$ & $\mathbb{Z}^{2g} \oplus \mathbb{Z}_2$ & $\mathbb{Z}^{2g} \oplus \mathbb{Z}_2$ & $\mathbb{Z}^{2g} \oplus \mathbb{Z}_2$ \\
\hline
\hline
$i$ & $K^{i}(E^0_g,\eta_1)$ & $K^{i}(E^0_g,(\xi,\eta_1))$ & $K^{i}(E^1_g,\eta_1)$ & $K^{i}(E^1_g,(\xi,\eta_1))$ \\
\hline
$0$ & $\mathbb{Z}^{2g} \oplus \mathbb{Z}_2$ & $\mathbb{Z}^{2g} \oplus \mathbb{Z}_2$ & $\mathbb{Z}^{2g}$ &  $\mathbb{Z}^{2g}$ \\
$1$ & $\mathbb{Z}^{2g}$ & $\mathbb{Z}^{2g}$ & $\mathbb{Z}^{2g}$ & $\mathbb{Z}^{2g}$ \\
\hline
\end{tabular}
\end{equation*}


\subsection{Non-oriented circle bundles over non-orientable compact surfaces}

Consider now connected, compact non-orientable surfaces. Every such surface can be expressed as an $n$-fold connected sum $M_n = \#^n \mathbb{RP}^n$. We have $H^1(M_n,\mathbb{Z}_2) = \mathbb{Z}^n_2$. We consider non-orientable circle bundles over $M_n$ with fibre orientation class $0 \neq \xi \in H^1(M_n,\mathbb{Z}_2)$. Let $w \in H^1(M_2,\mathbb{Z}_2)$ denote the first Stiefel-Whitney class of $M_n$. For simplicity we will only consider the case $\xi = w$, although the case $\xi \neq w$ is no harder to compute.\\

The cohomologies of $M_n$ with $\mathbb{Z}$ and $\mathbb{Z}_\xi$ coefficients are
\begin{equation*}
\renewcommand{\arraystretch}{1.4}
\begin{tabular}{|l|l|l|}
\hline
$i$ & $H^i(M_n,\mathbb{Z})$ & $H^i(M_n,\mathbb{Z}_\xi)$ \\
\hline
$0$ & $\mathbb{Z}$ & $0$ \\
$1$ & $\mathbb{Z}^{n-1}$ & $\mathbb{Z}^{n-1} \oplus \mathbb{Z}_2$ \\
$2$ & $\mathbb{Z}_2$ & $\mathbb{Z}$ \\
\hline
\end{tabular}
\end{equation*}
Let $j \in \mathbb{Z} = H^2(M_n,\mathbb{Z}_e)$ and $E_n^j \to M_n$ the corresponding circle bundle. Next we will compute the cohomology of $E_n^j$. For this the Leray-Serre spectral sequence by itself is inadequate and so we consider the fundamental group of $E_n^j$. One finds
\begin{equation*}
\pi_1(E_n^j) = \langle a_1 , a_2 , \dots , a_n , x \; | \; a_1^2 a_2^2 \dots a_n^2 = x^j, \, x^2 = 1 \rangle.
\end{equation*}
We conclude that $H_1(E_j,\mathbb{Z}) = \mathbb{Z}^{n-1} \oplus \mathbb{Z}_2 \oplus \mathbb{Z}_2$ if $j$ is even, $\mathbb{Z}^{n-1} \oplus \mathbb{Z}_4$ if $j$ is odd. From here the Leray-Serre spectral sequence suffices and we obtain
\begin{equation*}
\renewcommand{\arraystretch}{1.4}
\begin{tabular}{|l|l|l|l|l|}
\hline
$i$ & $H^i(E^j_n,\mathbb{Z})$, $j$ {\rm even} & $H^i(E^j_n,\mathbb{Z})$, $j$ {\rm odd} & $H^i(E^j_n,\mathbb{Z}_\xi)$, $j=0$ & $H^i(E^j_n,\mathbb{Z}_\xi)$, $j\neq 0$ \\
\hline
$0$ & $\mathbb{Z}$ & $\mathbb{Z}$ & $0$ & $0$ \\
$1$ & $\mathbb{Z}^{n-1}$ & $\mathbb{Z}^{n-1}$ & $\mathbb{Z}^{n} \oplus \mathbb{Z}_2$ & $\mathbb{Z}^{n-1} \oplus \mathbb{Z}_2$ \\
$2$ & $\mathbb{Z}^{n-1}\oplus \mathbb{Z}_2 \oplus \mathbb{Z}_2$ & $\mathbb{Z}^{n-1} \oplus \mathbb{Z}_4$ & $\mathbb{Z}^{n}$ & $\mathbb{Z}^{n-1} \oplus \mathbb{Z}_j $\\
$3$ & $\mathbb{Z}$ & $\mathbb{Z}$ & $\mathbb{Z}_2$ & $\mathbb{Z}_2$ \\
\hline
\end{tabular}
\end{equation*}
Let $k \in \mathbb{Z}$ and let $\eta_k \in H^3(E_n^j,\mathbb{Z})$ be the corresponding class. As in the last example $T$-duality amongst pairs $(E_n^j,\eta_k)$ is the interchange $j \leftrightarrow k$.

As usual we attempt to calculate the twisted $K$-theory with the Atiyah-Hirzebruch spectral sequence. We again find that some of the groups are determined up to an extension problem, but we can use T-duality to determine the answer. We find
\begin{equation*}
\renewcommand{\arraystretch}{1.4}
\begin{tabular}{|l|l|l|l|l|}
\hline
$i$ & $K^{i}(E^j_n,\eta_k)$ & $K^{i}(E^j_n,\eta_k)$ & $K^{i}(E^j_n,\eta_k)$ & $K^{i}(E^j_n,\eta_k)$ \\
    & $j$ {\rm even}, $k = 0$ & $j$ {\rm odd}, $k = 0$ & $j$ {\rm even}, $k \neq 0$ & $j$ {\rm odd}, $k \neq 0$ \\
\hline
$0$ & $\mathbb{Z}^{n} \oplus \mathbb{Z}_2 \oplus \mathbb{Z}_2$ &  $\mathbb{Z}^{n} \oplus \mathbb{Z}_4$ & $\mathbb{Z}^{n-1} \oplus \mathbb{Z}_2 \oplus \mathbb{Z}_2$ & $\mathbb{Z}^{n-1} \oplus \mathbb{Z}_4$\\
$1$ & $\mathbb{Z}^{n} $ & $\mathbb{Z}^{n} $ &  $\mathbb{Z}^{n-1} \oplus \mathbb{Z}_k$ & $\mathbb{Z}^{n-1} \oplus \mathbb{Z}_k$ \\
\hline
\hline
$i$ & $K^{i}(E^j_n,(\xi,\eta_k))$ & $K^{i}(E^j_n,(\xi,\eta_k))$ & $K^{i}(E^j_n,(\xi,\eta_k))$ & $K^{i}(E^j_n,(\xi,\eta_k))$ \\
    & $j = 0$, $k$ {\rm even} & $j \neq 0$, $k$ {\rm even} & $j=0$, $k$ {\rm odd} & $j \neq 0$, $k$ {\rm odd} \\
\hline
$0$ & $\mathbb{Z}^{n}$ & $\mathbb{Z}^{n-1} \oplus \mathbb{Z}_j$ & $\mathbb{Z}^{n}$ & $\mathbb{Z}^{n-1} \oplus \mathbb{Z}_j$ \\
$1$ & $\mathbb{Z}^{n} \oplus \mathbb{Z}_2 \oplus \mathbb{Z}_2$ & $\mathbb{Z}^{n-1} \oplus \mathbb{Z}_2 \oplus \mathbb{Z}_2$ & $\mathbb{Z}^n \oplus \mathbb{Z}_4$ & $\mathbb{Z}^{n-1} \oplus \mathbb{Z}_4$ \\
\hline
\end{tabular}
\end{equation*}


\end{document}